\theoremstyle{thmstyleone}%
\newtheorem{theorem}{Theorem}
\theoremstyle{thmstyletwo}%
\newtheorem{example}{Example}%
\newtheorem{remark}{Remark}%
\theoremstyle{thmstylethree}%
\newtheorem{definition}{Definition}%
\newtheorem{lemma}{Lemma}
\newcommand{\st}{\mbox{subject to}}
\begin{document}

\title[Quantile-based Random Sparse Kaczmarz for Corrupted and Noisy Linear Systems]{Quantile-based Random Sparse Kaczmarz for Corrupted and Noisy Linear Systems}


\author[1]{\fnm{Lu} \sur{Zhang}}\email{zhanglu21@nudt.edu.cn}

\author[1]{\fnm{Hongxia} \sur{Wang}}\email{wanghongxia@nudt.edu.cn}

\author*[1]{\fnm{Hui} \sur{Zhang}}\email{h.zhang1984@163.com}

\affil[1]{\orgdiv{Department of Mathematics}, \orgname{National University of Defense Technology}, \orgaddress{\city{Changsha}, \postcode{410073}, \state{Hunan}, \country{China}}}


\abstract{The randomized Kaczmarz method, along with its recently developed variants, has become a popular tool for dealing with large-scale linear systems. However, these methods usually fail to converge when the linear systems are affected by heavy corruptions, which are common in many practical applications. In this study, we develop a new variant of the randomized sparse Kaczmarz method with linear convergence guarantees, by making use of a quantile technique to detect corruptions. Moreover, we incorporate the averaged block technique into the proposed method to achieve parallel computation and acceleration. Finally, the proposed algorithms are illustrated to be very efficient through extensive numerical experiments.}

\keywords{Randomized sparse Kaczmarz method, subgaussian random matrix, corrupted linear system, averaged block Kaczmarz method, Quantile}



\maketitle
\section{Introduction}\label{sec1}

Recently, there has tremendous interest in developing Kaczmarz-type methods for large-scale linear systems in many applied fields, such as compressed sensing \cite{lorenz2014sparse}, phase retrieval \cite{tan2019phase,xian2022randomized,romer2021randomized}, tensor recovery \cite{chen2021regularized,du2021randomized}, medical imaging \cite{gordon1970algebraic} and so on.
The main challenge is to find certain solutions in an efficient way for linear systems with large-scale data, which may
be damaged by random noise due to imprecision processing or partially corrupted by data collection, transmission, adversarial components, and modern storage systems \cite{jarman2021quantilerk}. In particular, the corrupted data is usually useless and even harmful, but not easy to detect. Under some assumptions, the linear system may be formulated as
\begin{eqnarray}
\label{eq:1}
Ax=b,
\end{eqnarray}
where $A\in \mathbb{R}^{m\times n}$ is a matrix used to model the linear measurement procedure and $b\in \mathbb{R}^m$ is the measurement data. If we let $\hat{x}$ stand for the objective that we want to recover from the measurement data, then $b$ may not exactly equal to $A\hat{x}$ due to the possible existence of random noise and corruptions. In order to connect $b$ and $A\hat{x}$, we let $\tilde{b}=A\hat{x}$; then the measurement data $b$ can be expressed as
\begin{eqnarray}
b=\tilde{b}+b^c+r,\nonumber
\end{eqnarray}
where $b^c\in\mathbb{R}^m$ represents the fraction of corruptions and $r\in \mathbb{R}^m$ records random noise. In the case of $b^c=0$, the random Kaczmarz method has proven to be highly efficient for solving linear system \eqref{eq:1}; see for example \cite{needell2010randomized,schopfer2019linear,yuan2022sparse,yuan2022adaptively,zhang2022weighted}.
Recently, with the help of quantile technique, the authors of \cite{haddock2022quantile,jarman2021quantilerk,steinerberger2023quantile} have made effort to extend the random Kaczmarz method to deal with the case of $b^c$ being sparse in the sense that only a few entries of $b^c$ are nonzero but these entries may be arbitrarily large. However, the quantile-based random Kaczmarz (Quantile-RK) method in \cite{haddock2022quantile} is currently limited to finding least squares solutions to corrupted linear systems; on the other hand, the random sparse Kaczmarz (RaSK) method in \cite{schopfer2019linear} is only limited to find sparse solutions to uncorrupted linear system. A natural question is whether we can take advantage of both of them to design a new algorithm so that we are able to find sparse or least squares solutions to possibly corrupted linear systems?

In this paper, we give an affirmative answer to the question above by proposing a new iterative solver, called sparse quantile-based Kaczmarz (Quantile-RaSK). The proposed method inherits the advantages of RaSK and Quantile-RK and 
is functionally more powerful than both of them. Besides, it can also be viewed as a general algorithmic framework that includes many existing Kaczmarz-type methods as its special cases. Theoretically, we show that Quantile-RaSK converges in expectation with respect to the Bregman distance with linear convergence rates, which involves estimations of singular values of the measurement matrix $A$. In particular, we consider a class of subgaussian random matrices whose singular values are well studied, and hence the linear convergence rates can be reformulated.

In order to achieve parallel computation and fast convergence, we further propose a block version of the Quantile-RaSK method, abbreviated as Quantile-RaSKA, by incorporating the averaged block technique in \cite{tondji2023faster} and the extrapolation technique in \cite{merzlyakov1963relaxation,necoara2019faster}. Also, we show that Quantile-RaSKA converges linearly with rates related to singular values of the measurement matrix $A$.  By specifying the case of subgaussian random matrices, we show that Quantile-RaSKA converges faster than Quantile-RaSK by a linear factor.

\subsection{Notations}
We use $[\cdot]$ for the integral function. Let $\beta$ be the fraction of corruptions and $q$ be the proportion of acceptable hyperplanes.
For simplicity, we assume that $\beta m$ and $qm$ are integers instead of using $[\beta m]$ and $[qm]$.
For a sequence $y_i,1\leq i\leq n$, which is sorted from small to large as $y_{(i)},1\leq i\leq n$,
the $q\textrm{-quantile}$ of $y_i,1\leq i\leq n$ is defined by
\begin{eqnarray}		
y_q:= \left \{		
\begin{array}{lr}
y_{([nq]+1)},& \textrm{if $nq$~is~not~an integer},\\			
(y_{(nq)}+y_{(nq)+1})/2,& \textrm{otherwise}.		
\end{array}		
\right.		
\end{eqnarray}	
Define the $q$-quantile of residuals indexed in $S$ as
\begin{equation}
Q_q(x,S):={q\textrm{-quantile}}(|\langle x_k,a_i\rangle-b_i|:i\in S).\nonumber
\end{equation}
Moreover, we denote $\mathbb{E}_k$ as the conditional expectation with respect to the random sample selected in the $k$-th under the condition of the $k-1$ previous iterations.
For arbitrary set $C$,
$|C|$ represents the number of elements in set $C$.
If without a declaration, then norm $\|\cdot\|$ refers to the Euclidean norm.

\subsection{Related work}
\subsubsection{Randomized sparse Kaczmarz}
The standard Kaczmarz method \cite{karczmarz1937angenaherte} was proposed by Kaczmarz in 1937, which approximates the solution of linear systems by orthogonal projections onto the hyperplanes $a_{i_k}^Tx=b_{i_k}$ cyclically, with $a_{i_k}$ being the $i_k$-th row of $A$ and $b_i$ being the $i$-th entry of $b$. Let $i_k=\text{mod}(k-1,m)+1$; then the cyclical projection can be formulated as 
\begin{equation}
x_{k+1}=x_{k}-\frac{\langle a_{i_k},x_k\rangle-b_{i_k}}{\|a_{i_k}\|^2}a_{i_k}.\nonumber
\end{equation}
Later, the Kaczmarz method was rediscovered as the Algebraic Reconstruction Technique (ART) in computerized tomography \cite{hounsfield1973computerized}. Lots of effort was made to obtain quantitative convergence rates; see for example \cite{neumann1950functional,halperin1962product,deutsch1997rate,galantai2005rate}. Recently, the randomized Kaczmarz (RK) method was analyzed in \cite{strohmer2009randomized}, deducing a surprisingly simple convergence rate that only depends on the smallest singular value and the Frobenius norm of $A$. In  \cite{lorenz2014linearized,lorenz2014sparse,schopfer2019linear}, the randomized sparse Kaczmarz (RaSK) method was proposed to produce sparse solutions; it reads as 
\begin{align}
x_{k+1}^*&=x_k^*-\frac{\langle a_{i_k},x_k\rangle-b_{i_k}}{\|a_{i_k}\|^2}\cdot a_{i_k},\nonumber\\
x_{k+1}&=\mathcal{S}_{\lambda}(x_{k+1}^*),\nonumber
\end{align}
with $\lambda>0$ and soft shrinkage function $\mathcal{S}_{\lambda}(x):=\max\{|x|-\lambda,0\}\cdot \textrm{sign}(x)$.
For a consistent linear system $Ax=b$ the iterates $x_k$ converge to the unique sparse solution $\hat{x}$ of the strongly convex regularized basis pursuit problem \cite{chen2001atomic,cai2009convergence}:
\begin{equation}
\label{eq:1.3}
\min_{x\in \mathbb{R}^n} f(x):=\lambda \|x\|_1+\frac{1}{2}\|x\|^2, \st~ Ax=b.
\end{equation}
For accelerated variants of RaSK, the reader may refer to \cite{yuan2022sparse,yuan2022adaptively,zhang2022weighted,petra2015randomized,jiang2020kaczmarz}.
\subsubsection{Block sparse Kaczmarz}
In block sparse Kaczmarz methods, a subsystem $A_{P_k}x=b_{P_k}$ is used instead of a single hyperplane $a_{i_k}^Tx=b_{i_k}$ in each iteration, where $P_k\subset [m]$ and $|P_k|\geq 1$. Block sparse Kaczmarz methods may be divided into two types. The first type is the projective block sparse Kaczmarz method, whose iterate scheme is given by
\begin{equation}
\begin{aligned}
\label{eq:1.5}
x_{k+1}^*&=x_k^*-w_{k}\frac{A_{P_k}^T(A_{P_k}x_k-b_{P_k})}{\|A_{P_k}\|_2^2}\\
x_{k+1}&=\mathcal{S}_{\lambda}(x_{k+1}^*),
\end{aligned}
\end{equation}
where $w_k$ represents the extrapolated stepsize or relaxed parameter, used to enhance the practical performance of the block sparse Kaczmarz method. The extrapolated stepsize may be constant, adaptive stepsize, and Chebychev-based stepsize, etc \cite{necoara2019faster}.
The first type of block sparse Kaczmarz methods are considered in
\cite{necoara2019faster,needell2014paved} (with $\lambda=0$) and
\cite{tondji2023faster,petra2015randomized} (with $\lambda>0$). The second type is the averaged block sparse Kaczmarz method, reading as
\begin{equation}
\begin{aligned}
\label{eq:1.6}
x_{k+1}^*&=x_k^*-\frac{1}{|P_k|} \sum_{i\in P_k}w_i\frac{\langle a_i,x_k\rangle-b_i}{\|a_i\|_2^2}\cdot a_i,\\
x_{k+1}&=\mathcal{S}_{\lambda}(x_{k+1}^*).
\end{aligned}
\end{equation}
Note that the averaged block method makes use of the advantage of distributed computing. Such averaged block Kaczmarz-type methods are considered in \cite{necoara2019faster,moorman2021randomized,miao2022greedy} (with $\lambda=0$) and \cite{tondji2023faster} (with $\lambda>0$).
When $|P_k|=1$, both (\ref{eq:1.5}) and (\ref{eq:1.6}) reduce to the randomized sparse Kaczmarz method \cite{schopfer2019linear}; when $|P_k|=m$, the projective block sparse Kaczmarz method (\ref{eq:1.5}) reduces to the linearized Bregman method \cite{lorenz2014linearized,cai2009convergence,yin2010analysis}.

\subsubsection{Quantile randomized sparse Kaczmarz}
Suppose that we have the desired objective $\hat{x}$ at hand. Then it is easy to detect the corrupted data by considering the residual $\langle a_i,\hat{x}\rangle-b_i$ since a large residual implies corruption. This simple observation can help us develop heuristic methods. In fact, the multiple round Kaczmarz with removal method proposed in \cite{haddock2019randomized} is based on the intuition that the entries $b_i$ with large residuals are likely to be corrupted.
However, the number of corrupted entries must be very small in order to recover the desired solution $\hat{x}$ \cite{haddock2019randomized}. By borrowing the quantial technique, the authors of \cite{haddock2022quantile} proposed a quantile-based randomized Kaczmarz (Quantile-RK) method, which is still efficient when the corruption fraction is high. Very recently, a variant of Quantile-RK was proposed in \cite{cheng2022block}, called quantile-based averaged block Kaczmarz (Quantile-RKA), by incorporating the averaged block Kaczmarz method in \cite{necoara2019faster}.
The numerical experiment in Section 4.3 of the paper \cite{cheng2022block} shows that Quantile-RK with projective block may fail to converge even if there is only one single corrupted row in each block; however, Quantile-RK with averaged block continues to behave well. Compared with Quantile-RK with projective block, Quantile-RK with averaged block can reduce the effect of selected corrupted hyperplanes on the iterate. Therefore, we choose the averaged block technique to help us design new and accelerated iterative schemes.

\subsection{Contribution and organization}
We highlight the main contributions as follows:
\begin{itemize}
	\item propose quantile-based randomized sparse Kaczmarz (Quantile-RaSK) for finding sparse solutions to corrupted and noisy linear systems;
	\item propose quantile-based randomized sparse Kaczmarz with averaged block (Quantile-RaSKA) for parallel computation and acceleration;
	\item deduce detailed rates of linear convergence for the proposed methods.
\end{itemize}

The remainder of the paper is organized as follows. In Section 2, we introduce basic tools for analysis. In Section 3, the Quantile-RaSK method is proposed formally, along with linear convergence analysis. In Section 4,  the Quantile-RaSKA method based on averaged block technique is given; moreover, we verify that Quantile-RaSKA outperforms Quantile-RaSK in the subgaussian case.
In Section 5, we carry out numerical experiments to compare the performance of Quantile-RaSK and Quantile-RaSKA. In Section 6, we briefly remark the conclusions.

\section{Preliminaries}
First, we recall some basic tools for convex analysis. 
Let $f:\mathbb{R}^n\rightarrow \mathbb{R}$ be a convex function. The subdifferential of $f$ at $x\in \mathbb{R}^n$ is denoted by
\begin{eqnarray}
\partial f(x):=\{x^*\in \mathbb{R}^n|f(y)\geq f(x)+\langle x^*,y-x\rangle, \forall y\in \mathbb{R}^n\}.\nonumber
\end{eqnarray}
If the convex function $f$ is assumed to be differentiable, then $\partial f(x)=\{\nabla f(x)\}.$
We say that a convex function $f:\mathbb{R}^n\rightarrow \mathbb{R}$ is $\alpha$-strongly convex if there exits $\alpha>0$ such that for $\forall x,y\in \mathbb{R}^n$ and $x^*\in \partial f(x)$, we have
\begin{eqnarray}
f(y)\geq f(x)+\langle x^*,y-x\rangle+\frac{\alpha}{2}\|y-x\|^2.\nonumber
\end{eqnarray}
It can be easily verified that $f(\cdot)+\frac{\lambda }{2}\|\cdot\|^2$ is $\lambda$-strongly convex if $\lambda>0$ for any convex function $f$.
The Fenchel conjugate $f^*$ of $f$ is given by
\begin{eqnarray}
f^*(x):=\sup_{z\in\mathbb{R}^{n}}\{\langle x,z\rangle-f(z)\}.\nonumber
\end{eqnarray}

\subsection{Bregman distance and projection}
\begin{definition}[\cite{lorenz2014linearized}]
	Let $f:\mathbb{R}^n\rightarrow \mathbb{R}$ be a convex function. The Bregman distance between $x,y \in \mathbb{R}^n$ with respect to f and $x^*\in \partial f(x)$ is defined as
	\begin{eqnarray}
	D_f^{x^*}(x,y):=f(y)-f(x)-\langle x^*,y-x\rangle.
	\nonumber
	\end{eqnarray}
\end{definition}
For $f(x)=\frac{1}{2}\|x\|^2$, we have $ D_f^{x^*}(x,y)=\frac{1}{2}\|y-x\|^2.$ The following is an example of an nonsmooth but strongly convex function $f$.
\begin{example}
	\label{example:1}
	For $f(x)=\lambda \|x\|_1+\frac{1}{2}\|x\|^2$, its subdifferential is given by
	$\partial f(x)=\{x+\lambda\cdot s|s_i=\textrm{sign}(x_i),x_i\neq 0\ and\ s_i\in[-1,1],x_i=0\}$. We have $x^*=x+\lambda\cdot s\in\partial f(x)$ and  $D_f^{x^*}(x,y)=\frac{1}{2}\|y-x\|^2+\lambda(\|y\|_1-\langle s,y\rangle)$.
\end{example}
For strongly convex functions,  the Bregman distance can be bounded by the Euclidean distance. These bounds will be used in the convergence analysis of Quantile-RaSK.
\begin{lemma}[\cite{schopfer2019linear}]\label{lemma:2}
	Let $f:\mathbb{R}^n\rightarrow \mathbb{R}$ be $\alpha$-strongly convex. Then for all $x,y\in \mathbb{R}^n$ and $x^*\in \partial f(x), y^*\in \partial f(y)$, we have
	$$
	\frac{\alpha}{2}\|x-y\|^2\leq D_f^{x^*}(x,y)\leq \langle x^*-y^*,x-y\rangle\leq \|x^*-y^*\|\cdot\|x-y\|,
	$$
	and hence
	$D_f^{x^*}(x,y)=0$ if and only if $x=y$.
\end{lemma}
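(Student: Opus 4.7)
The plan is to verify the three inequalities by chaining standard convex-analytic identities: the lower bound comes directly from strong convexity, the middle one from the definition of subgradient at $y$, and the final bound is simply Cauchy--Schwarz. Since the statement concerns $\alpha$-strong convexity (rather than just convexity), I expect no serious obstacle; the only care needed is to be explicit about which subgradient inequality is used where.

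First I would establish the leftmost bound $\frac{\alpha}{2}\|x-y\|^2 \leq D_f^{x^*}(x,y)$. This is immediate from the definition of $\alpha$-strong convexity applied at $x$ with subgradient $x^* \in \partial f(x)$, which yields $f(y) \geq f(x) + \langle x^*, y-x\rangle + \tfrac{\alpha}{2}\|y-x\|^2$; rearranging and matching to the definition of the Bregman distance gives the claim.

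Next I would handle the middle bound $D_f^{x^*}(x,y) \leq \langle x^*-y^*, x-y\rangle$. Here the key is to exploit the subgradient at $y$: by the subgradient inequality $f(x) \geq f(y) + \langle y^*, x-y\rangle$, so $f(y) - f(x) \leq \langle y^*, y-x\rangle$. Substituting this into $D_f^{x^*}(x,y) = f(y) - f(x) - \langle x^*, y-x\rangle$ and collecting the linear term in $y-x$ yields precisely $\langle x^*-y^*, x-y\rangle$. The rightmost inequality is then just Cauchy--Schwarz.

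Finally, for the equivalence $D_f^{x^*}(x,y)=0 \iff x=y$, the direction $x=y \Rightarrow D_f^{x^*}(x,y)=0$ is by direct substitution. For the converse, the leftmost bound just proved gives $\frac{\alpha}{2}\|x-y\|^2 \leq 0$, and since $\alpha>0$ this forces $x=y$. The whole argument is short and essentially mechanical; the main ``obstacle'' is only to keep the two subgradient inequalities (one at $x$ using strong convexity, one at $y$ using plain convexity) straight so that the chain of inequalities is correctly oriented.
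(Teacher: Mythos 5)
Your proof is correct and is exactly the standard argument for this lemma (which the paper itself only cites from \cite{schopfer2019linear} without reproving): strong convexity at $x$ gives the lower bound, the plain subgradient inequality at $y$ combined with the definition of $D_f^{x^*}$ gives the middle bound, Cauchy--Schwarz gives the last, and the equivalence follows from the lower bound with $\alpha>0$. No gaps.
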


\begin{definition}[\cite{lorenz2014linearized}]
	Let $f:\mathbb{R}^n\rightarrow \mathbb{R}$ be $\alpha$-strongly convex and $C\subset \mathbb{R}^n$ be a nonempty closed convex set. The Bregman projection of x onto C with respect to f and $ x^*\in \partial f(x)$ is the unique point $\Pi_{C}^{x^*}(x)\in C$ such that
	\begin{eqnarray}
	\Pi_{C}^{x^*}(x):=\arg\min_{y\in C}D^{x^*}_f(x,y).
	\end{eqnarray}
\end{definition}

In general, it is hard to compute the Bregman projection since we need to solve a constrained nonsmooth convex optimization. The next lemma presents a way to obtain the Bregman projection onto affine subspaces.
\begin{lemma}[\cite{lorenz2014linearized}]
	\label{lemma2.3}
	Let $f:\mathbb{R}^n\rightarrow \mathbb{R}$ be $\alpha$-strongly convex, $a\in \mathbb{R}^n$, $\gamma\in \mathbb{R}.$
	The Bregman projection of $x\in \mathbb{R}^n$ onto the hyperplane $H(a,b)=\{x\in \mathbb{R}^n|\langle a,x\rangle=b\}$ with $a\neq 0$ is
	\begin{eqnarray}
	z:=\Pi_{H(a,b)}^{x^*}(x)=\nabla f^*(x^*-\hat{t}\cdot a),
	\nonumber
	\end{eqnarray}
	where $\hat{t}\in \mathbb{R}$ is a solution of
	$\min_{t\in \mathbb{R}}f^*(x^*-t\cdot a)+t\cdot b.$
	Moreover, for $z^*=x^*-\hat{t}a$ and all $y\in H(a,b)$ we have
	\begin{eqnarray}\label{lemma3:4}
	D^{z^*}_f(z,y)\leq D^{x^*}_f(x,y)-\frac{\alpha}{2}\frac{(\langle a,x\rangle-b)^2}{\|a\|^2}.
	\end{eqnarray}
	If $x\notin H_{\leq (a,b)}:=\{x\in \mathbb{R}^n|\langle a,x\rangle \leq b\}$,
	then
	$\Pi_{H_{\leq (a,b)}}^{x^*}(x)=\Pi_{H{ (a,b)}}^{x^*}(x)$.
\end{lemma}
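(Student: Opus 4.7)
The plan is to derive the projection formula via Lagrangian duality, establish the descent inequality through a three-point Bregman identity, and settle the half-space case via KKT conditions.

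First, I would rewrite the Bregman projection $\Pi_{H(a,b)}^{x^*}(x) = \arg\min_{y \in H(a,b)} D_f^{x^*}(x,y)$ as the constrained program $\min\{f(y) - \langle x^*, y\rangle : \langle a, y\rangle = b\}$, after dropping the $y$-independent terms from $D_f^{x^*}(x,\cdot)$. Introducing a scalar multiplier $t$ for the affine constraint, the first-order condition reads $x^* - t a \in \partial f(y)$. Because $f$ is $\alpha$-strongly convex, the Fenchel conjugate $f^*$ is everywhere differentiable, and the Fenchel--Young identity supplies the bijection $x^* - t a \in \partial f(y) \iff y = \nabla f^*(x^* - t a)$. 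Substituting this candidate into the constraint and recognizing that the resulting scalar equation in $t$ is exactly the stationarity condition of the dual objective $\varphi(t) = f^*(x^* - t a) + t b$, I would conclude that $z = \nabla f^*(x^* - \hat{t} a)$ with $z^* := x^* - \hat{t} a \in \partial f(z)$ and $\langle a, z\rangle = b$.

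Next, for the descent inequality I would invoke the three-point identity
\[
D_f^{x^*}(x,y) = D_f^{x^*}(x,z) + D_f^{z^*}(z,y) + \langle x^* - z^*, z - y\rangle,
\]
which follows from a direct expansion of the defining formulas and requires no smoothness of $f$. Since $x^* - z^* = \hat{t} a$ and both $z, y \in H(a,b)$, the cross term vanishes, leaving $D_f^{z^*}(z,y) = D_f^{x^*}(x,y) - D_f^{x^*}(x,z)$. The strong-convexity lower bound $D_f^{x^*}(x,z) \geq \tfrac{\alpha}{2}\|x - z\|^2$ (from the preceding lemma), combined with Cauchy--Schwarz applied to $\langle a, x - z\rangle = \langle a, x\rangle - b$ giving $\|x - z\|^2 \geq (\langle a, x\rangle - b)^2/\|a\|^2$, immediately delivers the stated inequality.

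For the half-space statement, I would analyze the KKT conditions of $\min\{f(y) - \langle x^*, y\rangle : \langle a, y\rangle \leq b\}$. If the multiplier associated with the inequality constraint were zero, stationarity would force $y = \nabla f^*(x^*) = x$; but the hypothesis $\langle a, x\rangle > b$ means $x$ is infeasible, a contradiction. Hence the multiplier is strictly positive, and complementary slackness forces $\langle a, y\rangle = b$, so the projections onto $H_{\leq(a,b)}$ and $H(a,b)$ coincide. The only delicate point in the whole argument is the non-smooth bijection $x^* - t a \in \partial f(y) \iff y = \nabla f^*(x^* - t a)$, which rests squarely on the fact that $\alpha$-strong convexity of $f$ makes $f^*$ everywhere differentiable with single-valued gradient; once this is in hand, every remaining step is a routine convex-analytic manipulation.
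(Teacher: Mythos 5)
Your argument is correct and is essentially the standard proof of this lemma (which the paper itself only cites from the reference without reproducing): the projection formula via Lagrangian/Fenchel duality using differentiability of $f^*$ under $\alpha$-strong convexity of $f$, the descent estimate via the three-point identity with the cross term $\langle \hat{t}a, z-y\rangle$ vanishing on $H(a,b)$ followed by $D_f^{x^*}(x,z)\geq \tfrac{\alpha}{2}\|x-z\|^2$ and Cauchy--Schwarz, and the half-space case via complementary slackness. No gaps worth flagging.
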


\subsection{Minimum singular value constants}
Our convergence analysis will heavily rely on the following constants; similar concepts previously appeared in \cite{steinerberger2023quantile,haddock2022quantile}.
The smallest and largest singular values of $A$ are denoted by $\sigma _{\min}(A)$ and $\sigma _{\max}(A)$ respectively, given by
\begin{equation}
\sigma_{\min }(A):=\min_{x\in \mathbb{R}^n,x\neq 0}\frac{\|Ax\|}{\|x\|},
\sigma_{\max }(A):=\max_{x\in \mathbb{R}^n,x\neq 0}\frac{\|Ax\|}{\|x\|}.\nonumber
\end{equation}
Denote the matrix that is formed by the columns of $A$ indexed by $J$ as $A_J$, and then define
\begin{align}
\tilde{\sigma}_{\min }(A):=\min \left\{\sigma_{\min }\left(A_J\right) \mid J \subset\{1, \ldots, n\}, A_J \neq 0\right\},\nonumber
\end{align}
Let $A_{I,J}$ be the submatrix consisting of the rows of $A$ indexed by $I$ and the columns of $A$ indexed by $J$. Define
$$
\label{2:1}
~~~\tilde{\sigma}_{q-\beta,\min}(A):
=\min\{\sigma_{\min}(A_{I,J})~:~ |I|=(q-\beta)m,I\subset
\{1,\dots,m\},J\subset
\{1,\dots,n\}\}.
$$
It should be noted that our defined constant $\tilde{\sigma}_{q-\beta,\min}(A)$ is slightly different from the one in \cite{steinerberger2023quantile,haddock2022quantile}, that is, 
\begin{eqnarray}
\sigma_{q-\beta,\min}(A):=\min\{\sigma_{\min}(A_{I})~:~ |I|=(q-\beta)m,I\subset
\{1,\dots,m\}\},\nonumber
\end{eqnarray}
which only involves the submatrices of $(q-\beta)m$ rows about $A$. For simplicity, we respectively rewrite them as $\sigma_{\min },\sigma_{\max },\tilde{\sigma}_{\min}, \tilde{\sigma}_{q-\beta,\min}$ and $\sigma_{q-\beta,\min}$ in this paper.
We remark that these constants are hard to estimate in general. 

\subsection{Convergence analysis of RaSK}
Here, we recall the convergence results of RaSK in noiseless and noisy cases respectively \cite{schopfer2019linear}, which will be used in our convergence analysis.
Let $ \textrm{supp}(\hat{x})=\left\{j \in\{1, \ldots, n\} \mid \hat{x}_j \neq 0\right\}$ and  $\tilde{\kappa}=\frac{\|A\|_F}{\tilde{\sigma}_{\min }}$. When $b \neq 0$ we also have $\hat{x} \neq 0$, and hence
\begin{align}
|\hat{x}|_{\min }:=\min \left\{\left|\hat{x}_j\right| \mid j \in \textrm{supp}(\hat{x})\right\}>0.\nonumber
\end{align}

\begin{lemma}[\cite{schopfer2019linear}, Theorem 3.2]
	The iterates $x_k$ of the RaSK method with inexact or exact steps converge in expectation to the true
	solution with a linear rate, it holds that
	\begin{equation}
	\label{convergence:RaSK}
	\mathbb{E}\left(D_f^{x_{k+1}^*}(x_{k+1},\hat{x})\right)
	\leq\left(1-\frac{1}{2}\cdot\frac{1}{\tilde{\kappa}^2} \cdot \frac{|\hat{x}|_{\min }}{|\hat{x}|_{\min }+2 \lambda}\right)\cdot \mathbb{E}\left(D_f^{x_{k}^*}(x_{k},\hat{x})\right).
	\end{equation}
\end{lemma}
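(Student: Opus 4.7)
The plan is to combine three ingredients: a deterministic per-step Bregman distance decrease obtained by viewing the RaSK update as a Bregman projection, the averaging produced by the randomized row selection, and an error bound that links the residual $\|A(x_k-\hat{x})\|$ to the Bregman distance $D_f^{x_k^*}(x_k,\hat{x})$.

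First, since $f(x)=\lambda\|x\|_1+\tfrac12\|x\|^2$ is $1$-strongly convex and the identity $\nabla f^*=\mathcal{S}_\lambda$ recasts the RaSK iteration as a Bregman projection of $x_k$ onto $H(a_{i_k},b_{i_k})$ (for the exact step; the inexact step admits an analogous estimate via a standard descent argument on the dual scalar objective), Lemma \ref{lemma2.3} applied with $\alpha=1$ and $y=\hat{x}\in H(a_{i_k},b_{i_k})$ yields the deterministic one-step decrease
\begin{equation*}
D_f^{x_{k+1}^*}(x_{k+1},\hat{x})\le D_f^{x_k^*}(x_k,\hat{x})-\frac{1}{2}\cdot\frac{(\langle a_{i_k},x_k\rangle-b_{i_k})^2}{\|a_{i_k}\|^2}.
\end{equation*}
Taking the conditional expectation $\mathbb{E}_k$ with the standard row probabilities $\|a_{i_k}\|^2/\|A\|_F^2$ cancels the denominator and collapses the squared residuals into $\|A(x_k-\hat{x})\|^2/\|A\|_F^2$, since $b=A\hat{x}$.

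Second, and this is the delicate part, I would establish an error bound of the form $\|A(x_k-\hat{x})\|^2\ge\tilde{\sigma}_{\min}^2\cdot\tfrac{|\hat{x}|_{\min}}{|\hat{x}|_{\min}+2\lambda}\cdot D_f^{x_k^*}(x_k,\hat{x})$. The starting idea is that $x_k-\hat{x}$ is supported on some column index set $J$, so $\|A(x_k-\hat{x})\|\ge\tilde{\sigma}_{\min}\|x_k-\hat{x}\|$ by definition of $\tilde{\sigma}_{\min}$. Converting $\|x_k-\hat{x}\|^2$ into $D_f^{x_k^*}(x_k,\hat{x})$ is subtler: Lemma \ref{lemma:2} gives the crude bound $\tfrac12\|x_k-\hat{x}\|^2\le D_f^{x_k^*}(x_k,\hat{x})$, but this must be sharpened because the Bregman distance also carries the $\ell_1$-contribution $\lambda(\|\hat{x}\|_1-\langle s,\hat{x}\rangle)$ from Example \ref{example:1}. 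Choosing $\hat{x}^*\in\partial f(\hat{x})$ whose sign vector matches $\hat{x}$ exactly on $\textrm{supp}(\hat{x})$ and splitting the Bregman distance along $\textrm{supp}(\hat{x})$ versus its complement, one sees that the factor $|\hat{x}|_{\min}/(|\hat{x}|_{\min}+2\lambda)$ captures precisely how the nonzero entries of $\hat{x}$, being at least $|\hat{x}|_{\min}$ in magnitude, prevent the $\ell_1$-penalty from inflating the Bregman distance too much beyond the squared Euclidean distance.

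Finally, substituting this error bound into the averaged inequality and invoking the tower property yields the claimed contraction with factor $1-\tfrac{1}{2\tilde{\kappa}^2}\cdot\tfrac{|\hat{x}|_{\min}}{|\hat{x}|_{\min}+2\lambda}$. The main obstacle is unmistakably the error bound in the second step: the projection inequality and the final combination are essentially bookkeeping, whereas the error bound is where the non-smoothness of $f$ genuinely interacts with the sparsity structure of $\hat{x}$ and demands the careful subgradient/support manipulation that produces the specific form of the convergence factor.
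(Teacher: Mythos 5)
Your proposal is correct and follows essentially the same route as the cited source (Schöpfer--Lorenz, Theorem 3.2): a one-step Bregman-projection decrease from Lemma \ref{lemma2.3}, averaging over the row-selection probabilities to produce $\|A(x_k-\hat{x})\|^2/\|A\|_F^2$, and then the restricted error bound converting this into a multiple of $D_f^{x_k^*}(x_k,\hat{x})$. The ``delicate'' error bound you propose to establish is exactly Lemma \ref{lemma3} of this paper (valid here because $x_k^*\in\mathcal{R}(A^T)$ along the iterates since $x_0^*=0$), so it can simply be invoked rather than re-derived.
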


\begin{lemma}[\cite{schopfer2019linear}, Theorem 3.4]
	\label{lemma:rask}
	Assume that instead of exact data $b$ only a noisy right hand side $b^{\delta}\in\mathbb{R}^m$ with $\|b^{\delta}-b\|\leq \delta$ is given, we have\\
	(a) for the RaSK with inexact step:
	\begin{align}
	\label{rasknoisy:1}
	\mathbb{E}(D_f^{x_{k+1}^*}(x_{k+1},\hat{x}))\leq
	\left(1-\frac{1}{2}\cdot\frac{1}{\tilde{\kappa}^2} \cdot\frac{|\hat{x}|_{\min}}{|\hat{x}|_{\min}+2\lambda}\right)
	\mathbb{E}(D_f^{x_{k}^*}(x_{k},\hat{x}))
	+\frac{1}{2}\frac{\|\delta\|^2}{\|A\|_F^2};
	\end{align}
	(b) for the RaSK with exact step (abbreviated by ERaSK):
	\begin{align}
	\label{rasknoisy:2}
	&~~~~\mathbb{E}(D_f^{x_{k+1}^*}(x_{k+1},\hat{x}))\nonumber\\
	&\leq
	\left(1-\frac{1}{2}\cdot\frac{1}{\tilde{\kappa}^2} \cdot\frac{|\hat{x}|_{\min}}{|\hat{x}|_{\min}+2\lambda}\right)
	\mathbb{E}(D_f^{x_{k}^*}(x_{k},\hat{x}))
	+\frac{1}{2}\frac{\|\delta\|^2}{\|A\|_F^2}
	+
	\frac{2\|\delta\|\cdot\|A\|_{1,2}}{\|A\|_F^2},
	\end{align}
	where $\|A\|_{1,2}=\sqrt{\sum_{i=1}^m \|a_i\|_1^2}$. Let $\alpha=\frac{|\hat{x}|_{\min}}{|\hat{x}|_{\min}+2\lambda}$.
\end{lemma}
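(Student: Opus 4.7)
The plan is to mimic the argument for the noiseless bound \eqref{convergence:RaSK} while carefully tracking the perturbation introduced by replacing $b$ with $b^{\delta}$. I would first establish a deterministic one-step inequality that bounds $D_f^{x_{k+1}^*}(x_{k+1},\hat{x})$ by $D_f^{x_k^*}(x_k,\hat{x})$ minus a progress term plus an explicit noise term, then take conditional expectation over the uniform row choice (with probability $\|a_{i_k}\|^2/\|A\|_F^2$), and finally iterate.

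For part (a), the inexact step $x_{k+1}^* = x_k^* - t_k a_{i_k}$ with $t_k = (\langle a_{i_k},x_k\rangle - b_{i_k}^{\delta})/\|a_{i_k}\|^2$ is convenient because it decouples the projection into the dual update. Using the Fenchel--Young identity at $(x_k,x_k^*)$ and $(x_{k+1},x_{k+1}^*)$ one obtains
\[
D_f^{x_{k+1}^*}(x_{k+1},\hat{x}) - D_f^{x_k^*}(x_k,\hat{x}) = f^*(x_{k+1}^*) - f^*(x_k^*) - \langle x_{k+1}^* - x_k^*,\hat{x}\rangle.
\]
Since $f$ is $1$-strongly convex, $f^*$ is $1$-smooth, and the descent lemma combined with $\langle x_{k+1}^* - x_k^*,\hat{x}\rangle = -t_k b_{i_k}$ gives, after writing $b_{i_k} = b_{i_k}^{\delta} - \delta_{i_k}$ and completing the square, an inequality of the form
\[
D_f^{x_{k+1}^*}(x_{k+1},\hat{x}) \le D_f^{x_k^*}(x_k,\hat{x}) - \tfrac{1}{2}\frac{(\langle a_{i_k},x_k\rangle - b_{i_k})^2}{\|a_{i_k}\|^2} + \tfrac{1}{2}\frac{\delta_{i_k}^2}{\|a_{i_k}\|^2}.
\]
Taking conditional expectation with weights $\|a_{i_k}\|^2/\|A\|_F^2$ collapses the first perturbation term to $\|Ax_k - b\|^2/\|A\|_F^2$, which by the argument underlying \eqref{convergence:RaSK} is at least $\tfrac{1}{\tilde{\kappa}^2}\cdot\alpha\cdot D_f^{x_k^*}(x_k,\hat{x})$, while the error term becomes exactly $\|\delta\|^2/\|A\|_F^2$. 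Passing to full expectation yields (a).

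For part (b), the exact step performs a line search in the $\hat{t}$ of Lemma \ref{lemma2.3}, so $x_{k+1}$ actually lies on the perturbed hyperplane $H(a_{i_k},b_{i_k}^{\delta})$ rather than merely in the dual step direction. The complication is that $\hat{x}\notin H(a_{i_k},b_{i_k}^{\delta})$, so Lemma \ref{lemma2.3} cannot be applied with $y=\hat{x}$ directly. The remedy is to apply \eqref{lemma3:4} with $y := \hat{x}' = \hat{x} + (\delta_{i_k}/\|a_{i_k}\|^2)\,a_{i_k}\in H(a_{i_k},b_{i_k}^{\delta})$, and then convert the bound back to a bound at $\hat{x}$. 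Using the explicit form of the Bregman distance in Example \ref{example:1}, the difference $D_f^{x_{k+1}^*}(x_{k+1},\hat{x}') - D_f^{x_{k+1}^*}(x_{k+1},\hat{x})$ involves a quadratic piece (absorbed into the $\|\delta\|^2/\|A\|_F^2$ term) and a linear $\ell_1$ piece bounded by $2\lambda\|\hat{x}'-\hat{x}\|_1 \le 2|\delta_{i_k}|\,\|a_{i_k}\|_1/\|a_{i_k}\|^2$. Taking conditional expectation with the same row-weighting, Cauchy--Schwarz over indices produces $2\|\delta\|\,\|A\|_{1,2}/\|A\|_F^2$, and the rest of the argument proceeds as in part (a).

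The main obstacle is the exact-step case: everything in (a) reduces to direct algebra because the dual update is linear in $b^{\delta}$, but in (b) the step size $\hat{t}$ is an implicit minimizer and $x_{k+1}$ is coupled to $b^{\delta}$ through the soft-thresholding. The comparison with the auxiliary point $\hat{x}'$ introduces a term that is not purely quadratic in $\delta$, and controlling it requires the $\ell_1$ bound coming from the specific $f(x) = \lambda\|x\|_1 + \tfrac{1}{2}\|x\|^2$, which is precisely what produces the extra $\|\delta\|\cdot\|A\|_{1,2}/\|A\|_F^2$ term rather than a quadratic-only correction.
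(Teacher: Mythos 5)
This lemma is imported verbatim from \cite{schopfer2019linear} (Theorem 3.4) and the present paper gives no proof of it, so there is nothing internal to compare against; your sketch correctly reconstructs the argument of the cited reference. Both the inexact-step computation (dual descent lemma for the $1$-smooth $f^*$, completing the square in $t_k$ to split off the $\delta_{i_k}^2/\|a_{i_k}\|^2$ term, then row-weighted expectation and the error bound $\|Ax_k-b\|^2\geq\alpha\,\tilde{\sigma}_{\min}^2\,D_f^{x_k^*}(x_k,\hat{x})$) and the exact-step detour through the shifted point $\hat{x}'\in H(a_{i_k},b^{\delta}_{i_k})$ with the $\ell_1$-correction yielding the $\|A\|_{1,2}$ term are exactly the approach of that proof.
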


\section{The Quantile-RaSK method}
In this section, we first propose the quantile-based randomized sparse Kaczmarz method, abbreviated by Quantile-RaSK. Then, we present a group of convergence results in noisy and noiseless cases under the existence of $\beta m$ corruptions; all proofs are deferred to Appendix.
\subsection{The proposed Quantile-RaSK method}
Motivated by the Quantile-RK method proposed in \cite{haddock2022quantile,jarman2021quantilerk,steinerberger2023quantile}, we employ an important statistical variable, quantile, to detect the corrupted measurements.
Under the assumption of standardized rows of $A$, the absolute residual $|a_{i_k}^Tx_k-b_{i_k}|$ is the distance from the iterate $x_k$ to the hyperplane $H(a_{i_k},b_{i_k})$.
Concretely, we first compute the $q$-quantile of residuals in each iterate, treat a row whose residuals are less than the quantile as an acceptable row, and then project it onto an acceptable hyperplane obtained by uniform sampling.
In order to recover sparse solutions, we use the nonsmooth Bregman projection with an augmented $\ell_1$ norm. In Lemma \ref{lemma2.3}, the computation of the Bregman projection needs to solve the minimization problem
\begin{eqnarray}
t_k:=\arg\min_{t\in \mathbb{R}}f^*(x_k^*-ta_{i_k})+tb_{i_k},\nonumber
\end{eqnarray}
which may not be exactly solved. As recommended by the authors in \cite{lorenz2014linearized}, we may choose an inexact step $t_k=\langle a_{i_k},x_k\rangle-b_{i_k}$ with a fixed index $i_k$, which has been chosen in a random way before computing the step $t_k$.
For generality, all rows $a_i^T,1\leq i\leq m$ are normalized.
In detail, Quantile-RaSK with exact step is denoted by Quantile-ERaSK.
The pseudocode of Quantile-RaSK with inexact step and exact step is presented as follows.

When the sampled block unfortunately contains a corrupted hyperplane, the iterate will be far away from the true solution of the uncorrupted linear system. It follows that the assumption of row incoherence (every two hyperplanes are not nearly parallel) is necessary to ensure that the bad iterate is unlikely to move the iterate too far from the true solution. The incoherence assumption was also introduced in \cite{haddock2022quantile,cheng2022block} to ensure that the quantile is representative.

\begin{algorithm}[H]
	\caption{quantile-based randomized sparse Kaczmarz (Quantile-RaSK)}\label{a1}
	\begin{algorithmic}[1]
		\State \textbf{Input}: Given $A \in \mathbb{R}^{m \times n}$, $x_{0}=x_{0}^*=0\in \mathbb{R}^{n}$,
		$b \in \mathbb{R}^{m},$ and parameters $\lambda, q,N$
		\State \textbf{Ouput}: solution of $\min_{x\in \mathbb{R}^n}\lambda\|x\|_{1}+\frac{1}{2}\|x\|_2^2~ \st~Ax=b$
		\State normalize $A$ by row
		\State \textbf{for} $k=0,1,2, \ldots$ do
		\State $\quad$$\quad$compute $N_1=\{1\leq i_k\leq m:|\langle x_k,a_{i_k}\rangle-b_{i_k}|\}$
		\State $\quad$$\quad$compute the $q$-quantile of $N_1$:  $Q_k=Q_q(x_k,1\leq i_k\leq m)$
		\State $\quad$$\quad$consider $N_2=\{1\leq i_k\leq m:|\langle x_k,a_{i_k}\rangle -b_{i_k}|\leq Q_k\}$
		\State $\quad$$\quad$sample ${i_k}$ at random with probability $ p_i=\frac{1}{qm}$, $i\in N_2$
		\State $\quad$$\quad$\textbf{switch} Type of step:
		\State $\quad$$\quad$\textbf{case1: inexact step}
		\State $\quad$$\quad$$\quad$$\quad$compute $t_k=\langle a_{i_k},x_k\rangle-b_{i_k}$
		\State $\quad$$\quad$\textbf{case2: exact step}
		\State $\quad$$\quad$$\quad$$\quad$compute $t_k=\arg\min_{t\in \mathbb{R}}f^*(x_k^*-t\cdot a_{i_k})+t\cdot b_{i_k}$
		\State $\quad$$\quad$\textbf{endSwitch}
		\State $\quad$$\quad$update $x_{k+1}^*=x_k^*-t_k\cdot a_{i_k}$
		\State $\quad$$\quad$update $x_{k+1}=\mathcal{S}_{\lambda}(x_{k+1}^*)$
		\State $\quad$$\quad$increment $k=k+1$
		\State \textbf{until} a stopping criterion is satisfied
	\end{algorithmic}
\end{algorithm}

\subsection{Quantile-RaSK for corrupted and noisy linear  system}
In this subsection, we consider a corrupted and noisy linear system
\begin{equation}
\label{3:linear}
Ax=b,
\end{equation}
where $b=\tilde{b}+b^c+r$ meaning that the observed data $b$ is the sum of the exact data $\tilde{b} = A\hat{x}$, the corruptions $b^c$ satisfying $\|b^c\|_0=\beta m$ and the noise $r\in \mathbb{R}^m$.
We want to recover the unknown solution $\hat{x}$ from $Ax=\tilde{b}$ by using Quantile-RaSK.
Before deducing the convergence result with respect to Bregman distance, we first bound the $q$-quantile of residuals, which is motivated by Lemma 1 in \cite{jarman2021quantilerk}.
\begin{lemma}
	\label{lemma1}
	Let $\beta < q < 1-\beta$ and apply Quantile-RaSK to problem (\ref{3:linear}) to generate iterate sequences $\{x_k\}$ and $\{Q_k\}$. Then
	\begin{eqnarray}
	Q_k\leq \frac{\sqrt{1-\beta}}{(1-\beta-q)\sqrt{m}}\sigma_{\max}\|x_k-\hat{x}\|+
	\frac{1-\beta}{1-\beta-q}\|r\|_{\infty}.\nonumber
	\end{eqnarray}
\end{lemma}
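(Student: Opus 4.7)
The plan is to bound $Q_k$ by averaging over the uncorrupted residuals rather than picking out a single one. Writing $U \subset [m]$ for the uncorrupted index set (with $|U| = (1-\beta)m$), the key observation is that for $i \in U$ the identity $b_i = \langle a_i,\hat{x}\rangle + r_i$ gives $|\langle a_i,x_k\rangle - b_i| = |\langle a_i,x_k-\hat{x}\rangle - r_i|$, so the uncorrupted residuals admit a clean decomposition in terms of $x_k - \hat{x}$ and the noise $r$.

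Next I would count how many uncorrupted residuals lie above $Q_k$. By the definition of the $q$-quantile, at least $(1-q)m$ of the $m$ residuals are $\geq Q_k$. At most $\beta m$ of those can come from corrupted rows (since only that many exist in total), hence at least $(1-q-\beta)m$ indices $i \in U$ satisfy $|\langle a_i,x_k\rangle - b_i| \geq Q_k$. Summing the uncorrupted residuals from below therefore yields
\begin{equation}
(1-\beta-q)m \cdot Q_k \;\leq\; \sum_{i\in U}|\langle a_i,x_k\rangle - b_i|. \nonumber
\end{equation}

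For the upper side I would apply the triangle inequality and Cauchy--Schwarz: using the decomposition above,
\begin{equation}
\sum_{i\in U}|\langle a_i,x_k-\hat x\rangle - r_i| \;\leq\; \sum_{i\in U}|\langle a_i,x_k-\hat x\rangle| + \sum_{i\in U}|r_i| \;\leq\; \sqrt{|U|}\,\|A_U(x_k-\hat x)\| + |U|\cdot \|r\|_{\infty}, \nonumber
\end{equation}
and then dominate $\|A_U(x_k-\hat x)\| \leq \|A(x_k-\hat x)\| \leq \sigma_{\max}\|x_k-\hat x\|$. Substituting $|U|=(1-\beta)m$ and dividing through by $(1-\beta-q)m$ delivers exactly the stated bound, with the $\sqrt{1-\beta}$ and $1-\beta$ prefactors coming from Cauchy--Schwarz and the trivial $\ell_1$--$\ell_\infty$ estimate respectively.

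No step is technically hard; the only real subtlety is choosing the right averaging strategy. A naive route, squaring and using only the worst-case lower bound $|\langle a_i,x_k-\hat x\rangle|\geq Q_k-\|r\|_\infty$, produces a bound with $1/\sqrt{1-\beta-q}$ and a coefficient of $1$ on $\|r\|_\infty$, which does not match the statement. It is the switch to summing (rather than squaring) the residuals, combined with Cauchy--Schwarz on the inner-product sum, that yields the precise constants $\tfrac{\sqrt{1-\beta}}{(1-\beta-q)\sqrt{m}}$ and $\tfrac{1-\beta}{1-\beta-q}$ stated in the lemma.
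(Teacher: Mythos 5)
Your proposal is correct and follows essentially the same route as the paper: identify the uncorrupted set of size $(1-\beta)m$, observe that at least $(1-\beta-q)m$ of its residuals are at least $Q_k$, bound the sum of uncorrupted residuals via the triangle inequality plus Cauchy--Schwarz on $\sum_{i}|\langle a_i,x_k-\hat x\rangle|$, and dominate $\|A_U(x_k-\hat x)\|$ by $\sigma_{\max}\|x_k-\hat x\|$. The constants you obtain match the paper's exactly, so there is nothing to add.
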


Now we turn to the first main result of this study.
\begin{theorem}[corrupted and noisy case]
	\label{key}
	Let the linear system (\ref{3:linear}) have a fraction $\beta$ of corrupted entries and the noise $r$. Assume that $\beta<q<1-\beta$ and the matrix $A\in\mathbb{R}^{m\times n}$ is full rank with unit-norm rows. If the following relationship holds	
	\begin{align}
	\label{condition2}
	&\frac{2\sqrt{\beta(1-\beta)}}{1-\beta-q}\left(
	\sigma_{\max }
	+\sqrt{\frac{(1-\beta)m}{n}}
	\right)
	+
	\frac{\beta(1-\beta)}{(1-\beta-q)^2}\left(\sigma_{\max }+2\sqrt{\frac{(1-\beta)m}{n}}
	\right)\nonumber\\
	&~~~~~~~~~~~~~~~~~~~~~~~~~~~~~~<
	\frac{\alpha(q-\beta)}{2q}\frac{\tilde{\sigma}_{q-\beta,\min}^2}{\sigma_{\max }},
	\end{align}
	then Quantile-RaSK converges linearly, in the sense that\\
	(a) the iterates $x_k$ generated by Quantile-RaSK with inexact step satisfy
	\begin{align}
	&\mathbb{E}[D_f^{x_{k+1}^*}(x_{k+1},\hat{x})]\leq
	\left(1-C_1\right)\mathbb{E}[D_f^{x_{k}^*}(x_{k},\hat{x})]
	+C_2\|r\|_{\infty}^2.\nonumber
	\end{align}
	(b) the iterates $x_k$ generated by Quantile-RaSK with exact step satisfy
	\begin{align}
	&~~~~\mathbb{E}[D_f^{x_{k+1}^*}(x_{k+1},\hat{x})]\nonumber\\
	&\leq
	\left(1-C_1\right)\mathbb{E}[D_f^{x_{k}^*}(x_{k},\hat{x})]
	+C_2\|r\|_{\infty}^2
	+\frac{2}{\sqrt{qm}}\|r\|_{\infty}\cdot\|A\|_{1,2}
	+\frac{2\lambda}{qm}\|b-\tilde{b}\|\cdot\|A\|_{1,2},\nonumber
	\end{align}
	where
	\begin{eqnarray}
	\label{constant:1}
	&C_1=\frac{\alpha(q-\beta)}{2q^2}\frac{\tilde{\sigma}_{q-\beta,\min}^2}{m}
	-
	\frac{2\sqrt{\beta(1-\beta)}}{q(1-\beta-q)}\left(
	\frac{\sigma_{\max }^2}{m}+\frac{\sqrt{1-\beta}\sigma_{\max }}{\sqrt{mn}}
	\right)
	-
	\frac{\beta(1-\beta)}{q(1-\beta-q)^2}\left(\frac{\sigma_{\max }^2}{m}+\frac{2\sqrt{1-\beta}\sigma_{\max}}{\sqrt{mn}}
	\right),\nonumber\\
	&C_2=\frac{\sqrt{\beta}(1-\beta)}{q(1-\beta-q)}\left(1+\frac{\sqrt{\beta(1-\beta)}}{1-\beta-q}\right)\sqrt{\frac{n}{m}}\sigma_{\max}
	+\frac{1}{2}\frac{\beta(1-\beta)^2}{q(1-\beta-q)^2}
	+\frac{1}{2}.\nonumber
	\end{eqnarray}
\end{theorem}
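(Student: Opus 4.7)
The plan is to adapt the one-step Bregman-distance analysis that drives RaSK (Lemma \ref{lemma:rask}) to the quantile sampling setting, keeping careful track of three sources of imprecision: the noise $r$, the sparse corruption $b^c$, and the slack coming from the fact that the acceptable set $N_2$ is random and depends on the current iterate. First I would rewrite the inexact step as $t_k = \tilde t_k - (b^c_{i_k}+r_{i_k})$, where $\tilde t_k:=\langle a_{i_k}, x_k\rangle - \tilde b_{i_k}$ is the ``clean'' step that would exactly project onto the hyperplane $H(a_{i_k}, \tilde b_{i_k})\ni \hat{x}$. Plugging this decomposition into the standard one-step identity behind the proof of Lemma \ref{lemma:rask} (and using unit-norm rows) yields a pointwise inequality
\[
D_f^{x_{k+1}^*}(x_{k+1},\hat x)
\leq D_f^{x_k^*}(x_k,\hat x)
-\tfrac12\,\langle a_{i_k}, x_k-\hat x\rangle^2
+E(i_k),
\]
with $E(i_k)$ a quadratic form in $b^c_{i_k}+r_{i_k}$ and in the sampled residual $|\langle a_{i_k}, x_k\rangle - b_{i_k}|$, mirroring the inexact-data cross terms in \eqref{rasknoisy:1}--\eqref{rasknoisy:2}.

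Next I would take the conditional expectation $\mathbb{E}_k$ with weights $p_i=1/(qm)$ on $N_2$. Since $\|b^c\|_0=\beta m$, the set $N_2$ splits as $N_2^{\mathrm{good}}\cup N_2^{\mathrm{bad}}$ with $|N_2^{\mathrm{bad}}|\leq \beta m$ and $|N_2^{\mathrm{good}}|\geq (q-\beta)m$. Picking any subset $I\subset N_2^{\mathrm{good}}$ of size exactly $(q-\beta)m$ and restricting to the column support $J:=\textrm{supp}(x_k)\cup\textrm{supp}(\hat x)$, the contracting term is lower-bounded by
\[
\frac{1}{2qm}\sum_{i\in N_2}\langle a_i, x_k-\hat x\rangle^2
\geq \frac{1}{2qm}\|A_{I,J}(x_k-\hat x)_J\|^2
\geq \frac{\tilde\sigma_{q-\beta,\min}^2}{2qm}\,\|x_k-\hat x\|^2,
\]
and the sparse-Kaczmarz conversion behind the proof of \eqref{convergence:RaSK} (with $\alpha=\tfrac{|\hat x|_{\min}}{|\hat x|_{\min}+2\lambda}$) turns $\|x_k-\hat x\|^2$ into the Bregman distance, supplying the leading positive piece of $C_1$.

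For the error $E(i_k)$ the quantile control is decisive: $|\langle a_i, x_k\rangle-b_i|\leq Q_k$ on all $i\in N_2$, and Lemma \ref{lemma1} bounds $Q_k$ by an affine combination of $\sigma_{\max}\|x_k-\hat x\|/\sqrt m$ and $\|r\|_\infty$. Squaring and averaging the bad-row contribution (at most a $\beta/q$ fraction of the sample), together with the trivial noise estimate $|r_{i_k}|\leq \|r\|_\infty$ on $N_2^{\mathrm{good}}$, I would split $E(i_k)$ via Young-type inequalities $2uv\leq u^2/c+cv^2$ into an $\|x_k-\hat x\|^2$-piece and an $\|r\|_\infty^2$-piece. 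The first piece is absorbed into the contraction (yielding the two subtractive terms of $C_1$), while the second becomes $C_2\|r\|_\infty^2$; condition \eqref{condition2} is exactly the algebraic requirement that the resulting $C_1$ stay positive. Part (b) repeats the exact-step manipulation of \eqref{rasknoisy:2} verbatim, producing the two additional cross terms $\tfrac{2}{\sqrt{qm}}\|r\|_\infty\|A\|_{1,2}$ and $\tfrac{2\lambda}{qm}\|b-\tilde b\|\|A\|_{1,2}$ after replacing the generic noise $\delta$ by $b^c+r$.

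The main obstacle is the bookkeeping in the last step: tuning the Young splittings so that the $\|x_k-\hat x\|^2$-coefficients merging into the contraction give exactly the two negative terms of $C_1$ and the leftover $\|r\|_\infty^2$-coefficients combine to the stated $C_2$. A subtle point is that $N_2$ itself depends on $x_k$, so all expectations must be conditional; the worst-case cardinality bounds $|N_2^{\mathrm{bad}}|\leq\beta m$, $|N_2^{\mathrm{good}}|\geq(q-\beta)m$ are what make the analysis independent of that dependence. Once the splittings are fixed, iterating the one-step inequality $\mathbb{E}[D_f^{x_{k+1}^*}(x_{k+1},\hat x)]\leq (1-C_1)\mathbb{E}[D_f^{x_k^*}(x_k,\hat x)]+(\text{noise})$ yields the claimed linear convergence.
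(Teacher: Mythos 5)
Your proposal is correct and follows essentially the same route as the paper's proof: split the acceptable set $N_2=B$ into its corrupted part $S$ and uncorrupted part $B\backslash S$ with the cardinality bounds $|S|\le\beta m$, $|B\backslash S|\ge(q-\beta)m$, obtain the contraction from the restricted singular value $\tilde{\sigma}_{q-\beta,\min}$ on the uncorrupted part, control the corrupted part via the quantile bound of Lemma \ref{lemma1}, and absorb the $\|x_k-\hat{x}\|\cdot\|r\|_\infty$ cross term by a Young-type case split. The only cosmetic difference is that the paper packages the uncorrupted contribution by invoking the noisy RaSK theorem (Lemma \ref{lemma:rask}) on the subsystem $A_{B\backslash S}x=b_{B\backslash S}-r_{B\backslash S}$ and handles the corrupted rows via the auxiliary projection point $x_k^c$, whereas you re-derive the same one-step inequality directly; the resulting algebra is identical.
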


When the linear system (\ref{3:linear}) is only corrupted, i.e., $r=0$, Theorem \ref{key} turns into the following theorem. The proof is similar to Theorem \ref{key}, and we omit it.
\begin{theorem}[corrupted case]
	\label{Th:3.1}
	Let $r=0$ in linear system (\ref{3:linear}).
	Assume that $\beta< q < 1-\beta$ and the matrix $A\in\mathbb{R}^{m\times n}$ in linear system (\ref{3:linear}) is full rank with unit-norm rows. If the following relationship holds
	\begin{equation}
	\label{condition:2}
	\frac{2q}{q-\beta}\cdot\left(
	\frac{2\sqrt{\beta}}{\sqrt{1-q-\beta}}+\frac{\beta}{1-q-\beta}
	\right)
	\cdot\frac{|\hat{x}|_{\min}+2\lambda}{|\hat{x}|_{\min}}
	<
	\frac{\tilde{\sigma}_{q-\beta,\min}^2(A)}{\sigma_{\max}^2(A)},
	\end{equation}
	then Quantile-RaSK converges linearly for the linear system with any $\beta$-corruptions, in the sense that\\
	(a) the iterates $x_k$ generated by Quantile-RaSK with inexact step satisfy
	\begin{eqnarray}
	\label{th1}
	\mathbb{E}[D_f^{x_{k+1}^*}(x_{k+1},\hat{x})]
	\leq
	(1-C)\mathbb{E}[D_f^{x_{k}^*}(x_{k},\hat{x})];
	\end{eqnarray}	
	(b) the iterates $x_k$ generated by Quantile-RaSK with exact step satisfy
	\begin{equation}
	\label{th2}
	\mathbb{E}[D_f^{x_{k+1}^*}(x_{k+1},\hat{x})]
	\leq
	(1-C)\mathbb{E}[D_f^{x_{k}^*}(x_{k},\hat{x})]+\frac{2\lambda}{qm}\|b-\tilde{b}\|\cdot\|A\|_{1,2},\nonumber
	\end{equation}
	where
	$$C
	=\frac{q-\beta}{2q^2m}\cdot\frac{|\hat{x}|_{\min}}{|\hat{x}|_{\min}+2\lambda}\cdot\tilde{\sigma}_{q-\beta,\min}^2(A)
	-
	\frac{1}{qm}
	\left(
	\frac{2\sqrt{\beta}}{\sqrt{1-q-\beta}}+\frac{\beta}{1-q-\beta}
	\right)\cdot\sigma_{\max}^2(A)>0.$$
	
\end{theorem}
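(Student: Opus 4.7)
The plan is to mirror the proof of Theorem \ref{key} under the simplification $r=0$, combining the Schopfer--Lorenz sparse-Kaczmarz descent inequality (Lemma \ref{lemma:rask}) with the quantile bound of Lemma \ref{lemma1}. Conditioning on $x_k$, the index $i_k$ is sampled uniformly from the filter set $N_2$, so the one-step expected Bregman change splits into a sum over good rows $N_2\setminus C$ and bad rows $N_2\cap C$, where $C\subset\{1,\ldots,m\}$ is the unknown corrupted set with $|C|=\beta m$; note $|N_2\setminus C|\geq (q-\beta)m$ and $|N_2\cap C|\leq \beta m$. For a good row $\hat{x}\in H(a_i,b_i)$, and Lemma \ref{lemma2.3} yields the familiar quadratic decrease $\tfrac{1}{2}(\langle a_i,x_k-\hat{x}\rangle)^{2}$; for a bad row $\hat{x}$ is off the hyperplane, and I will control the (possibly positive) Bregman change via the filter property $|\langle a_i,x_k\rangle - b_i|\leq Q_k$.

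The good-row contribution sums to $\tfrac{1}{2qm}\|A_{N_2\setminus C}(x_k-\hat{x})\|^{2}$. Applying the sparse restriction-to-support argument from the proof of Lemma \ref{lemma:rask}, now on any submatrix of at least $(q-\beta)m$ rows, gives the lower bound $\alpha\,\tilde{\sigma}_{q-\beta,\min}^{2}\,D_f^{x_k^*}(x_k,\hat{x})$ on $\|A_{N_2\setminus C}(x_k-\hat{x})\|^{2}$, which is exactly why the constant $\tilde{\sigma}_{q-\beta,\min}$ is defined as the minimum over all row-and-column-restricted submatrices of that size. This produces the positive piece $\tfrac{(q-\beta)\alpha\,\tilde{\sigma}_{q-\beta,\min}^{2}}{2q^{2}m}\,D_f^{x_k^*}(x_k,\hat{x})$ of the final rate $C$.

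For the bad-row contribution, Lemma \ref{lemma1} at $r=0$ specializes to $Q_k\leq \tfrac{\sqrt{1-\beta}\,\sigma_{\max}}{(1-\beta-q)\sqrt{m}}\,\|x_k-\hat{x}\|$. A careful expansion of the Bregman change at a corrupted index produces a term linear in the residual and a quadratic one; bounding the residual by $Q_k$ and summing over at most $\beta m$ bad rows yields expressions proportional to $\tfrac{2\sqrt{\beta}}{\sqrt{1-q-\beta}}$ and $\tfrac{\beta}{1-q-\beta}$, times $\tfrac{\sigma_{\max}^{2}}{qm}\|x_k-\hat{x}\|^{2}$. Using the strong-convexity bound $\tfrac{1}{2}\|x_k-\hat{x}\|^{2}\leq D_f^{x_k^*}(x_k,\hat{x})$ (Lemma \ref{lemma:2} with modulus $1$ for $f$) converts this into a negative coefficient on $D_f^{x_k^*}(x_k,\hat{x})$; the hypothesis (\ref{condition:2}) is precisely the inequality that makes the net coefficient positive, giving $C>0$ and (\ref{th1}).

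Part (b) follows the same template, but the exact minimizing $t_k$ is computed using the corrupted datum $b_{i_k}$ rather than $\tilde{b}_{i_k}$; mirroring the argument in Lemma \ref{lemma:rask}(b) then produces the extra additive perturbation $\tfrac{2\lambda}{qm}\|b-\tilde{b}\|\,\|A\|_{1,2}$. I expect the main obstacle to be the adaptive nature of $N_2\setminus C$: since $N_2$ depends on the iterate through the quantile, the Schopfer--Lorenz lower bound must hold uniformly over all eligible row subsets of size $\ge (q-\beta)m$, which is exactly what $\tilde{\sigma}_{q-\beta,\min}$ was defined to deliver, reducing this worry to careful book-keeping rather than a fresh estimate.
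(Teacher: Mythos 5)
Your proposal follows essentially the same route as the paper: the paper omits the proof of Theorem \ref{Th:3.1}, stating only that it is "similar to Theorem \ref{key}", and your sketch is precisely that argument specialized to $r=0$ --- splitting the conditional expectation over the quantile-filtered set into uncorrupted rows (handled via the RaSK contraction with the row-and-column-restricted constant $\tilde{\sigma}_{q-\beta,\min}$) and corrupted rows (handled via the $Q_k$ bound of Lemma \ref{lemma1} and strong convexity), with the extra $\frac{2\lambda}{qm}\|b-\tilde{b}\|\,\|A\|_{1,2}$ term for the exact step arising exactly as in the paper's treatment of the subgradient difference $\lambda\langle s_{k+1}-s_k,a_{i}\rangle(b_{i}-\tilde{b}_{i})$. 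The only remarks are sketch-level: your direct summation over good rows actually yields the slightly stronger coefficient $\tfrac{1}{2qm}$ in place of $\tfrac{q-\beta}{2q^2m}$ (harmless), and the exact constants multiplying $\sigma_{\max}^2$ in the bad-row bound require the same bookkeeping the paper performs in the proof of Theorem \ref{key}.
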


\begin{remark}
	The convergence results in Theorem \ref{key} and Theorem \ref{Th:3.1} strictly depend on the condition (\ref{condition2}) and (\ref{condition:2}), respectively. They hold for small enough $\beta$ since the left-hand side of (\ref{condition2}) and (\ref{condition:2}) tend to zero as $\beta$ tends to zero.
\end{remark}

\begin{remark}
	Clearly, parameters $q,\beta$ should satisfy $\beta< q< 1-\beta$. It is natural for us to explore the best relationship between $\beta$ and $q$ for a more precise convergence rate.
	Since it is difficult for us to provide theoretical guidance,
	we will give an empirical result in the numerical part.
\end{remark}


\subsection{Subgaussian case}

In this subsection, we specialize $A$ to be a class of random matrices satisfying the following Assumption 1 and Assumption 2; then we can provide the bound of singular values $\tilde{\sigma}_{q-\beta,\min},\sigma_{\max}$ with a high probability.\\
\textbf{Assumption 1.}
All the rows $a_i$ of the matrix $A$ have unit norms and are independent. Additionally, for all $i\in\{1,\cdots,m\}$, $\sqrt{n}a_i$ is mean zero isotropic and has uniformly bounded subgaussian norm $\|\sqrt{n}a_i\|_{\psi_2}\leq K$.\\
\textbf{Assumption 2.}
Each entry $a_{ij}$ if $A$ has probability density function $\phi_{ij}(t)\leq D\sqrt{n}$ for all $t\in\mathbb{R}$. (The quantity $D$ is a constant which we will use throughout when referring to this assumption.)

Given that subgaussian random vectors tend to be almost orthogonal, the assumption of incoherence naturally holds in the subgaussian case. Although sampling a corrupted row leads to a bad projection, the assumption incoherence ensures that it is not bad enough and the next good iterate will make up for it. On the other hand, incoherence keeps the smallest singular value of submatrices of $A$ away from zero, where the convergence rate of Quantile-RaSKA is well-defined in theory.

\begin{lemma}
	\label{lemma:2.5}
	Assume that $A\in\mathbb{R}^{m\times n}$ satisfies Assumption 1 and Assumption 2 with constants $D$ and $K$.
	Let the corrupted parameter $\beta$ and the acceptable parameter $q$ satisfy $\beta<q<1-\beta$ and let $A$ be tall enough in the sense that there exist positive constants $D_1$ such that
	\begin{equation}
	\label{condition1}
	\frac{m}{n}>\frac{\textsc{D}_1}{q-\beta}\log\frac{DK}{q-\beta}.
	\end{equation}
	Then, with probability at least $1-3\textrm{exp}(-d_1(q-\beta) m)$ the constant $\tilde{\sigma}_{q-\beta,\min}(A)$ can be lower bounded as follows
	\begin{eqnarray}
	\label{2:sigma}
	\tilde{\sigma}_{q-\beta,\min} \geq c_L (q-\beta)^{\frac{3}{2}}\sqrt{\frac{m}{n}}.
	\end{eqnarray}	
	Additionally, with a probability of at least $1-2\textrm{exp}(-d_2m)$ the largest singular value of $A$ can be  upper bounded as follows
	\begin{eqnarray}
	\label{2:sigmamax}
	\sigma_{\max}\leq c_K\sqrt{\frac{m}{n}}.
	\end{eqnarray}	
	Above, $D_1, d_1, d_2, c_L, c_K$ are positive constants.
\end{lemma}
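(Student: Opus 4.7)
The plan is to prove the two bounds separately. For the upper bound on $\sigma_{\max}(A)$, I would apply the standard operator-norm concentration for subgaussian random matrices (Vershynin's Theorem~4.6.1 in \emph{High-Dimensional Probability}). Because the rows $\sqrt{n}a_i$ are independent, mean zero, isotropic, with subgaussian norm at most $K$, this yields $s_{\max}(\sqrt{n}A)\le \sqrt{n}+CK^{2}\sqrt{m}+t$ with probability at least $1-2\exp(-ct^{2})$. Choosing $t=\sqrt{m}$ and dividing by $\sqrt{n}$ delivers $\sigma_{\max}(A)\le c_K\sqrt{m/n}$ with probability $1-2\exp(-d_2 m)$, where $m\ge n$ is guaranteed by the hypothesis (\ref{condition1}). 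Assumption~2 is not needed for this half.

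For the harder lower bound on $\tilde\sigma_{q-\beta,\min}(A)$, I would proceed in two sub-steps. First, fix index sets $I\subset\{1,\dots,m\}$ with $|I|=k:=(q-\beta)m$ and $J\subset\{1,\dots,n\}$ with $|J|=r$. The submatrix $A_{I,J}$ has independent subgaussian rows in $\mathbb{R}^{r}$, and Assumption~2 (bounded marginal density) supplies the small-ball/anti-concentration input required by a Rudelson--Vershynin style bound on the smallest singular value of a tall random matrix. Applied at small-ball level $\varepsilon=q-\beta$, it produces
\begin{equation*}
\mathbb{P}\!\left(\sigma_{\min}(A_{I,J})\le c_L(q-\beta)^{3/2}\sqrt{m/n}\right)\le \exp\!\bigl(-c_0\,k\,\log\tfrac{1}{q-\beta}\bigr)+\exp(-c_1 k),
\end{equation*}
where the threshold decomposes as $c_L(q-\beta)^{3/2}\sqrt{m/n}=c_L(q-\beta)\sqrt{k/n}$: the first factor is the chosen small-ball scale and $\sqrt{k/n}$ is the baseline order of $\sigma_{\min}$ for a $k\times r$ matrix whose entries have standard deviation $n^{-1/2}$. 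Second, I take a union bound over $I$ and $J$: using $\binom{m}{k}\le\exp(k\log(e/(q-\beta)))$ and at most $2^{n}$ choices of $J$, the failure probability is bounded by $\exp\!\bigl(n\log 2 + k\log\tfrac{e}{q-\beta}-c_0 k\log\tfrac{1}{q-\beta}\bigr)$. Hypothesis (\ref{condition1}) is calibrated precisely so that $n\log 2$ is dominated by a small fraction of $c_0 k\log\tfrac{1}{q-\beta}$, after which the exponent collapses to at most $-d_1(q-\beta)m$; combining both sub-steps then gives the claimed probability $1-3\exp(-d_1(q-\beta)m)$.

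The main obstacle lies in the first sub-step: straightforward subgaussian concentration (Vershynin~4.6.1) gives only $\exp(-ct^{2})$ deviations for the smallest singular value, which is far too weak to survive a $\binom{m}{k}$ union bound whose exponent grows like $k\log\tfrac{1}{q-\beta}$. The remedy is the Rudelson--Vershynin small-ball refinement, which, at the cost of an extra $(q-\beta)$ factor in the lower bound on $\sigma_{\min}$, upgrades the concentration to $\exp\!\bigl(-ck\log\tfrac{1}{q-\beta}\bigr)$ and provides exactly the margin required. The remaining task is bookkeeping: calibrating the constants $c_L, c_0, c_1, D_1, d_1$ so that the final tail probability matches $3\exp(-d_1(q-\beta)m)$, while tracking the implicit dependence on $K$ and $D$ that enters through Assumptions~1 and~2.
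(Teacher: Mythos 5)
Your proposal is correct and follows essentially the same route the paper takes: the paper omits the proof and defers to Proposition~1 and Theorem~3 of Haddock et al., whose argument is precisely your combination of the subgaussian operator-norm bound for $\sigma_{\max}$ and a small-ball/$\varepsilon$-net estimate with a union bound over the row subsets $I$ for the restricted least singular value. The one place you go beyond the cited result---the extra union bound over column subsets $J$, needed because $\tilde\sigma_{q-\beta,\min}$ also minimizes over $J$ and paid for by hypothesis (\ref{condition1})---is handled correctly; the only slip is the transposed form of Vershynin's bound (it should read $s_{\max}(\sqrt{n}A)\le\sqrt{m}+CK^{2}(\sqrt{n}+t)$, not $\sqrt{n}+CK^{2}\sqrt{m}+t$), which does not affect the conclusion since $m\ge n$ here.
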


The proof is very similar to that of Proposition 1 and Theorem 3 in \cite{haddock2022quantile}; we omit the detail. 
Equipped with these bounds, we proceed to bound $C_1,C_2$ in Theorem \ref{key}. Then with a high probability
$$C_1\geq \frac{c_1'}{n}, ~ C_2\leq c_2',$$
where $c_1'$ and $c_2'$ are positive constants only depending on $q$ and $\beta$. Therefore, we conclude the following lemma directly from Theorem \ref{key}.
\begin{lemma}
	Assume that the normalized random matrix $A$ satisfies Assumption 1 and Assumption 2. Let the corrupted parameter $\beta$ and the acceptable parameter $q$ satisfy $\beta<q<1-\beta$ and let $A$ be tall enough in (\ref{condition1}). For any $k\in\mathbb{N}$, with a high probability, the iterates $x_k$ generated by Quantile-RaSK with inexact step satisfy
	\begin{equation}
	\label{subgaussian:1}
	\mathbb{E}[D_f^{x_{k+1}^*}(x_{k+1},\hat{x})]\leq
	\left(1-\frac{c_1'}{n}\right)\mathbb{E}[D_f^{x_{k}^*}(x_{k},\hat{x})]
	+c_2'\|r\|_{\infty}^2,
	\end{equation}
	and
	the iterates $x_k$ generated by Quantile-RaSK with exact step satisfy
	\begin{align}
	\mathbb{E}[D_f^{x_{k+1}^*}(x_{k+1},\hat{x})]\leq
	\left(1-\frac{c_1'}{n}\right)\mathbb{E}[D_f^{x_{k}^*}(x_{k},\hat{x})]
	&+c_2'\|r\|_{\infty}^2+\frac{2}{\sqrt{qm}}\|r\|_{\infty}\cdot\|A\|_{1,2}\nonumber\\
	&+\frac{2\lambda}{qm}\|b-\tilde{b}\|\cdot\|A\|_{1,2}.
	\end{align}
\end{lemma}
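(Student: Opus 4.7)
The plan is to obtain the two inequalities as an immediate corollary of Theorem~\ref{key} by replacing the spectral quantities $\tilde{\sigma}_{q-\beta,\min}$ and $\sigma_{\max}$ with their high-probability estimates from Lemma~\ref{lemma:2.5}. First I would invoke Lemma~\ref{lemma:2.5} and a union bound to secure, on a single event of probability at least $1-3\exp(-d_1(q-\beta)m)-2\exp(-d_2 m)$, the simultaneous bounds $\tilde{\sigma}_{q-\beta,\min}^2\geq c_L^2(q-\beta)^3\frac{m}{n}$ and $\sigma_{\max}\leq c_K\sqrt{m/n}$ (and hence $\sigma_{\max}^2\leq c_K^2 m/n$).

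Next I would substitute these estimates into the explicit formula for $C_1$ in Theorem~\ref{key}. The positive leading term becomes
$$\frac{\alpha(q-\beta)}{2q^2}\cdot\frac{\tilde{\sigma}_{q-\beta,\min}^2}{m}\geq \frac{\alpha c_L^2(q-\beta)^4}{2q^2\,n},$$
while the two subtracted terms are controlled via $\sigma_{\max}^2/m\leq c_K^2/n$ and $\sigma_{\max}/\sqrt{mn}\leq c_K/n$, yielding an upper bound of order $1/n$ as well. Combining these and collecting the $1/n$ factor produces $C_1\geq c_1'/n$ with $c_1'$ depending only on $q,\beta,\alpha,c_K,c_L$. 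A parallel substitution in $C_2$ uses $\sqrt{n/m}\,\sigma_{\max}\leq c_K$ to show that the first summand of $C_2$ is bounded by a constant; the remaining two summands are already constants in $m,n$, giving $C_2\leq c_2'$.

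Finally, once $C_1>0$ is in hand one observes that condition (\ref{condition2}) of Theorem~\ref{key} is precisely the positivity condition $C_1>0$ up to a common factor of $\tilde{\sigma}_{q-\beta,\min}^2/\sigma_{\max}$; hence the hypothesis of Theorem~\ref{key} is satisfied on the same high-probability event. Invoking parts (a) and (b) of Theorem~\ref{key} and replacing $C_1$ by $c_1'/n$ and $C_2$ by $c_2'$ yields exactly (\ref{subgaussian:1}) and its exact-step counterpart.

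The main obstacle will be ensuring that the positive term in $C_1$ actually dominates the two $\sigma_{\max}$-driven subtracted terms after substitution, because the dominant side scales like $(q-\beta)^4$ whereas the subtracted terms scale like $\sqrt{\beta}$ and $\beta$; thus the implicit hypothesis needed (and already present through condition (\ref{condition2})) is that $\beta$ is small enough relative to $q-\beta$ and to $c_L,c_K$ for the net coefficient to remain positive. Beyond this dominance check, the argument is a routine substitution and bookkeeping of constants, with no further probabilistic ingredients than those supplied by Lemma~\ref{lemma:2.5}.
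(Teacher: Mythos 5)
Your proposal is correct and follows exactly the route the paper takes: the paper likewise obtains this lemma by substituting the high-probability bounds $\tilde{\sigma}_{q-\beta,\min}^2\gtrsim (q-\beta)^3 m/n$ and $\sigma_{\max}\lesssim\sqrt{m/n}$ from Lemma~\ref{lemma:2.5} into the constants $C_1,C_2$ of Theorem~\ref{key} to get $C_1\geq c_1'/n$ and $C_2\leq c_2'$, with the same implicit smallness requirement on $\beta$ that you flag. (Only a cosmetic slip: the rescaling that turns $C_1>0$ into condition~(\ref{condition2}) is multiplication by $qm/\sigma_{\max}$, not by $\tilde{\sigma}_{q-\beta,\min}^2/\sigma_{\max}$.)
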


\begin{remark}
	According to Lemma \ref{lemma1} in the subgaussian case, the distance between the sampled hyperplane and the current iterate can be bounded by $c/\sqrt{n}$. Therefore, even if we project onto a corrupted row, the iterate is not too far from the true solution. Intuitively, this guarantees the convergence of the Quantile-RaSK method.
\end{remark}

\section{The Quantile-RaSK method with block accelerations}
In order to speed up the Quantile-RaSK method in Section 3, a randomized sparse Kaczmarz algorithm based on quantile and block acceleration (Quantile-RaSKA) is proposed. Under the existence of $\beta m$ corruptions, we show that Quantile-RaSKA enjoys a speedup by a factor of $n$ over Quantile-RaSK in the subgaussian setting.

\subsection{The proposed  Quantile-RaSKA method}
We proceed to design a more general sparse solver for a corrupted and noisy linear system (\ref{3:linear}).
We continue to use the statistics quantile to detect corrupted equations. In addition, we use the averaged block randomized sparse Kaczmarz method to achieve distributed computing and acceleration.
Different from Algorithm 1 using an uncorrupted hyperplane, the averaged block method needs Bregman projections onto all uncorrupted rows, and then take average of these projections as a new projection, which improves the utilization of matrix information. We provide an accelerated version of Quantile-RaSK, which is abbreviated as Quantile-RaSKA. The pseudocode is as follows.
\begin{algorithm}
	\caption{Quantile-RaSK with block acceleration (Quantile-RaSKA)}\label{a2}
	\begin{algorithmic}[1]
		\State \textbf{Input}: Given $A \in \mathbb{R}^{m \times n}$, $x_{0}=x_{0}^*=0\in \mathbb{R}^{n}$,
		$b \in \mathbb{R}^{m},$ and parameters $\lambda, q,N$
		\State \textbf{Ouput}: solution of $\min_{x\in \mathbb{R}^n}\lambda\|x\|_{1}+\frac{1}{2}\|x\|_2^2~ \st~Ax=b$
		\State normalize $A$ by row
		\State \textbf{for} $k=0,1,2, \ldots$ do
		\State $\quad$$\quad$compute $N_1=\{1\leq i_k\leq m:|\langle x_k,a_{i_k}\rangle-b_{i_k}|\}$
		\State $\quad$$\quad$compute the $q$-quantile of $N_1$:  $Q_k=Q_q(x_k,1\leq i_k\leq m)$
		\State $\quad$$\quad$consider $T=\{i\in \{1,\cdots,m\} ~|~ |\langle a_i,x_k\rangle-b_i|<Q_k\},|T|=\eta$
		\State $\quad$$\quad$$x_{k+1}^*=x_k^*-\frac{1}{\eta}\sum_{i\in T} w_i(\langle a_i,x_k\rangle-b_i) a_{i}$
		\State $\quad$$\quad$$x_{k+1}=\mathcal{S}_{\lambda}(x_{k+1}^*)$
		\State $\quad$$\quad$increment $k=k+1$
		\State \textbf{until} a stopping criterion is satisfied
	\end{algorithmic}
\end{algorithm}

\begin{remark}
	The Quantile-RaSKA can recover existing quantile-based randomized Kaczmarz variants, such as Quantile-RK (let $\lambda=0,\eta=1$), Quantile-RKA (let $\lambda=0$) and Quantile-RaSK (let $\eta=1$).
\end{remark}
Note that the stopping rule here is to reach the maximum number of iterations $N$, although it is suitable to use other stopping rules.
When dealing with the ill-conditioned problem (i.e. $\sigma_{q-\beta,\min }$ and $\sigma_{\min }$ small, $\|A\|_F$ and $\sigma_{\max }$ large),
we emphasize that the limitation of stepsize $w_i$ in the interval $(0,2)$ may slow down the convergence of Algorithm 2.
Intuitively, the extrapolated stepsize $w_i>0,i\in T$ makes the convergence rate of Algorithm 2 more flexible.

The presence of corruption may damage the performance of the averaged block variant.
As a block containing a corrupted row may lead to the iteration away from the true solution, the assumption of row incoherence is also needed to make intersection subspaces close to individual projection.

\subsection{Quantile-RaSKA method for corrupted and noisy linear  system}
For the sake of simplification, we use a constant stepsize $w$. To investigate the linear convergence of Quantile-RaSKA, we start by introducing important preliminary results from \cite{tondji2023faster,schopfer2019linear,steinerberger2023quantile,zhang2022quantile}. The first is a generalization of the proximal point operator \cite{tondji2023faster}, which characterizes the error bound between two consecutive iterates. Its proof can refer to Lemma 4.3 in \cite{tondji2023faster}.

\begin{lemma}[\cite{tondji2023faster}]
	\label{lemma2}
	Let $f,\Phi:\mathbb{R}^n\rightarrow\mathbb{R}\cup \{+\infty\}$ be convex, where $\textrm{dom}(f)=\mathbb{R}^n$ and $\textrm{dom}(\Phi)\neq \emptyset$.
	Let $\mathscr{X}\subset \textrm{dom}(\Phi)$ be nonempty and convex, $x_k\in\mathbb{R}^n,x_k^*\in\partial f(x_k)$.
	Assume that
	\begin{equation}
	x_{k+1} \in \underset{x \in \mathscr{X}}{\operatorname{argmin}}\left\{\Phi(x)+D_f^{x_k^*}\left(x_k, x\right)\right\}.\nonumber
	\end{equation}
	Then there exist subgradient $x_{k+1}^* \in \partial f\left(x_{k+1}\right)$ such that it holds
	\begin{equation}
	\label{eq4:1}
	\Phi(y)+D_f^{x_k^*}\left(x_k, y\right) \geq \Phi\left(x_{k+1}\right)+D_f^{x_k^*}\left(x_k, x_{k+1}\right)+D_f^{x_{k+1}^*}\left(x_{k+1}, y\right)
	\end{equation}
	for any $y\in\mathscr{X}$.
\end{lemma}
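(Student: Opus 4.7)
The statement is a generalized three-point inequality for Bregman-proximal subproblems. My plan is to apply the first-order optimality condition to the subproblem defining $x_{k+1}$ and then perform a cancellation of the $f$-terms in the three Bregman distances to reduce everything to a single subgradient inequality for $\Phi$.

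First I would note that the objective $G(x):=\Phi(x)+D_f^{x_k^*}(x_k,x)$ equals $\Phi(x)+f(x)-f(x_k)-\langle x_k^*,x-x_k\rangle$, which is convex in $x$ since $f$ and $\Phi$ are. Since $x_{k+1}$ minimizes $G$ over the convex set $\mathscr{X}$, the necessary optimality condition $0\in\partial G(x_{k+1})+N_{\mathscr{X}}(x_{k+1})$, together with the sum rule for subdifferentials (which applies since $\textrm{dom}(f)=\mathbb{R}^n$ gives a qualification condition), yields the existence of $\phi^*\in\partial\Phi(x_{k+1})$, some $x_{k+1}^*\in\partial f(x_{k+1})$, and a normal cone element $\nu\in N_{\mathscr{X}}(x_{k+1})$ with
$$\phi^*+x_{k+1}^*-x_k^*+\nu=0.$$
This is the $x_{k+1}^*$ whose existence the lemma asserts.

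Next I would expand the Bregman distances directly. A short calculation collapses the $f$-values telescopically and gives
$$D_f^{x_k^*}(x_k,y)-D_f^{x_k^*}(x_k,x_{k+1})-D_f^{x_{k+1}^*}(x_{k+1},y)=\langle x_{k+1}^*-x_k^*,\,y-x_{k+1}\rangle.$$
Thus the claimed inequality \eqref{eq4:1} is equivalent to showing
$$\Phi(y)-\Phi(x_{k+1})\geq\langle x_k^*-x_{k+1}^*,\,y-x_{k+1}\rangle,\quad y\in\mathscr{X}.$$

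Finally, convexity of $\Phi$ gives $\Phi(y)-\Phi(x_{k+1})\geq\langle\phi^*,y-x_{k+1}\rangle$. Substituting $\phi^*=x_k^*-x_{k+1}^*-\nu$ from the optimality relation and using $\langle\nu,y-x_{k+1}\rangle\leq 0$ (which holds because $\nu\in N_{\mathscr{X}}(x_{k+1})$ and $y\in\mathscr{X}$) closes the chain. The main obstacle, as far as it is an obstacle, is being careful about sign conventions and about legitimately applying the subdifferential sum rule with a set constraint; neither is deep, but both must be stated cleanly to justify the simultaneous choice of $\phi^*$, $x_{k+1}^*$, and $\nu$ above.
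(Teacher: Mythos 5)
Your argument is correct in outline and is essentially the standard proof of this three-point lemma (the paper itself gives no proof, deferring to Lemma~4.3 of the cited reference, whose argument is the same: first-order optimality plus the telescoping identity for Bregman distances). Your three-point identity
$$D_f^{x_k^*}(x_k,y)-D_f^{x_k^*}(x_k,x_{k+1})-D_f^{x_{k+1}^*}(x_{k+1},y)=\langle x_{k+1}^*-x_k^*,\,y-x_{k+1}\rangle$$
checks out, and the reduction of \eqref{eq4:1} to a single subgradient inequality for $\Phi$ is the right move. The one step that does not follow from the stated hypotheses is the three-way decomposition $\phi^*+x_{k+1}^*-x_k^*+\nu=0$ with $\phi^*\in\partial\Phi(x_{k+1})$ and $\nu\in N_{\mathscr{X}}(x_{k+1})$ \emph{separately}: splitting $\partial(\Phi+\iota_{\mathscr{X}})(x_{k+1})$ into $\partial\Phi(x_{k+1})+N_{\mathscr{X}}(x_{k+1})$ requires a qualification condition such as $\operatorname{ri}(\operatorname{dom}\Phi)\cap\operatorname{ri}(\mathscr{X})\neq\emptyset$, and the lemma only assumes $\mathscr{X}\subset\operatorname{dom}(\Phi)$, which permits $\mathscr{X}$ to sit on the boundary of $\operatorname{dom}(\Phi)$ where the splitting can fail. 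The fix costs nothing: set $h:=\Phi+\iota_{\mathscr{X}}$ and apply the sum rule only to $h+f$ (legitimate because $\operatorname{dom}(f)=\mathbb{R}^n$ makes $f$ continuous everywhere), obtaining a single $g^*\in\partial h(x_{k+1})$ with $g^*=x_k^*-x_{k+1}^*$; then for $y\in\mathscr{X}$ the subgradient inequality for $h$ reads $\Phi(y)-\Phi(x_{k+1})\geq\langle x_k^*-x_{k+1}^*,\,y-x_{k+1}\rangle$, which is exactly the inequality your identity requires, with no need to ever isolate $\phi^*$ and $\nu$. With that adjustment your proof is complete; note also that in the paper's actual application $\Phi$ is affine and $\mathscr{X}=\mathbb{R}^n$, so the issue is invisible there but matters for the lemma as stated.
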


The following lemma provides the error bound of Bregman distance in terms of residuals.
\begin{lemma}[\cite{schopfer2019linear}]
	\label{lemma3}
	Let $\tilde{\sigma}_{\min }$ and $|\hat{x}|_{\min }$ be as defined above. Then for all $x \in \mathbb{R}^n$ with $\partial f(x) \cap \mathcal{R}\left(A^T\right) \neq \emptyset$ and all $x^*=A^T y \in \partial f(x) \cap \mathcal{R}\left(A^T\right)$ we have
	\begin{equation}
	\label{eq4:2}
	D_f^{x^*}(x, \hat{x}) \leq \gamma \cdot\|A x-b\|_2^2 ,
	\end{equation}
	where
	\begin{equation}
	\gamma=\frac{1}{\tilde{\sigma}_{\min }^2} \cdot \frac{1}{\alpha}.\nonumber
	\end{equation}
\end{lemma}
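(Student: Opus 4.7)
The plan is to combine the upper Bregman bound from Lemma \ref{lemma:2} with a sparsity-restricted singular-value estimate, then bootstrap to upgrade a linear estimate in the residual into the desired quadratic one.

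First I would invoke the upper bound of Lemma \ref{lemma:2}, giving
$$D_f^{x^*}(x,\hat{x})\leq \langle x^*-\hat{x}^*,\,x-\hat{x}\rangle$$
for any $\hat{x}^*\in\partial f(\hat{x})$. Since $\hat{x}$ is the unique minimizer of the strongly convex basis-pursuit problem \eqref{eq:1.3}, KKT produces a dual multiplier $\hat{y}$ with $\hat{x}^*:=A^T\hat{y}\in\partial f(\hat{x})\cap\mathcal{R}(A^T)$. Using the hypothesis $x^*=A^Ty$ together with $A\hat{x}=b$, the inner product rewrites as $\langle y-\hat{y}, Ax-b\rangle$, and Cauchy--Schwarz bounds it by $\|y-\hat{y}\|\cdot\|Ax-b\|$. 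A naive estimate of $\|y-\hat{y}\|$ would yield only a linear bound in $\|Ax-b\|$, so I need to prove the bootstrap inequality $\|y-\hat{y}\|^2\leq(\alpha\tilde{\sigma}_{\min}^2)^{-1}D_f^{x^*}(x,\hat{x})$ to close the argument.

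For the bootstrap, let $J=\textrm{supp}(\hat{x})$. On $J$ the subgradient is uniquely $\hat{x}^*_j=\hat{x}_j+\lambda\,\textrm{sign}(\hat{x}_j)$, and a pointwise comparison exploiting $|\hat{x}_j|\geq|\hat{x}|_{\min}$ yields
$$\|x^*_J-\hat{x}^*_J\|^2\leq \alpha^{-1}\langle x^*-\hat{x}^*,\,x-\hat{x}\rangle,$$
where $\alpha=|\hat{x}|_{\min}/(|\hat{x}|_{\min}+2\lambda)$ emerges from the ratio between $|\hat{x}_j|$ and $|\hat{x}_j|+\lambda\geq|\hat{x}^*_j|$. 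Choosing representatives $y,\hat{y}$ in $\mathcal{R}(A_J)$ and invoking the definition of $\tilde{\sigma}_{\min}$ gives $\|y-\hat{y}\|\leq\|x^*_J-\hat{x}^*_J\|/\tilde{\sigma}_{\min}$; chaining this with Lemma \ref{lemma:2} to replace the inner product on the right by $D_f^{x^*}(x,\hat{x})$ delivers the bootstrap. Substituting into the Cauchy--Schwarz estimate and squaring yields $D_f^{x^*}(x,\hat{x})\leq(\alpha\tilde{\sigma}_{\min}^2)^{-1}\|Ax-b\|^2=\gamma\|Ax-b\|^2$, which is the claim.

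The principal obstacle is the pointwise comparison on the support: extracting the exact factor $\alpha$ from the subdifferential of $\lambda\|\cdot\|_1+\tfrac{1}{2}\|\cdot\|^2$ requires a careful case analysis that distinguishes sign agreement from sign flip between $x^*_j$ and $\hat{x}^*_j$, and one must additionally verify that the off-support components $(x^*-\hat{x}^*)_{J^c}$ contribute nonnegatively to $\langle x^*-\hat{x}^*,\,x-\hat{x}\rangle$ so that dropping them when passing to the support-restricted inner product is legitimate. Both of these steps rely essentially on the fact that $\hat{x}$ solves a basis-pursuit problem whose KKT system pins down $\hat{x}^*$ on $J$ and constrains its sign pattern off $J$.
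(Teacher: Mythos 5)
The paper does not prove this lemma; it is imported verbatim from \cite{schopfer2019linear}, so I can only judge your argument on its own terms. Your skeleton is the right one and matches the spirit of the cited proof: bound $D_f^{x^*}(x,\hat{x})$ by $G:=\langle x^*-\hat{x}^*,x-\hat{x}\rangle=\langle y-\hat{y},Ax-b\rangle$, apply Cauchy--Schwarz, and close by a bootstrap. One remark on the bookkeeping: your sentence about ``replacing the inner product by $D_f^{x^*}(x,\hat{x})$ via Lemma \ref{lemma:2}'' uses that lemma in the wrong direction ($D\leq G$, not $G\lesssim D$, and the reverse inequality is false near the boundary of the support). This is repairable: your pointwise estimate $\|(x^*-\hat{x}^*)_J\|^2\leq\alpha^{-1}G$ is correct as stated (each coordinate term of $G$ is nonnegative by monotonicity of the separable subdifferential, and $|x_j^*-\hat{x}_j^*|\leq\alpha^{-1}|x_j-\hat{x}_j|$ for $j\in J$), so you should bootstrap on $G$ itself, obtaining $G\leq(\alpha\tilde{\sigma}_{\min}^2)^{-1}\|Ax-b\|^2$, and only then invoke $D\leq G$.

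The genuine gap is the singular-value step $\|y-\hat{y}\|\leq\tilde{\sigma}_{\min}^{-1}\|(x^*-\hat{x}^*)_J\|$ with $J=\mathrm{supp}(\hat{x})$. Since $(x^*-\hat{x}^*)_J=A_J^T(y-\hat{y})$ and $A_J$ has only $|J|=s$ columns, the map $v\mapsto A_J^Tv$ annihilates the whole of $\mathcal{R}(A)\cap\mathcal{R}(A_J)^{\perp}$, which is nontrivial whenever $\mathrm{rank}(A)>s$. You cannot ``choose representatives in $\mathcal{R}(A_J)$'': $y$ is pinned down modulo $\ker(A^T)$ by $x^*=A^Ty$, and the same $y-\hat{y}$ must appear in the Cauchy--Schwarz step, where its component orthogonal to $\mathcal{R}(A_J)$ still pairs nontrivially with $Ax-b=A(x-\hat{x})$ (which need not lie in $\mathcal{R}(A_J)$ unless $\mathrm{supp}(x)\subseteq\mathrm{supp}(\hat{x})$). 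The natural repair --- work with $J'=\mathrm{supp}(x)\cup\mathrm{supp}(\hat{x})$, note $Ax-b\in\mathcal{R}(A_{J'})$, project $y-\hat{y}$ onto $\mathcal{R}(A_{J'})$, and use $\sigma_{\min}(A_{J'})\geq\tilde{\sigma}_{\min}$ (this is precisely why $\tilde{\sigma}_{\min}$ is defined as a minimum over \emph{all} column subsets, which your argument never exploits) --- then breaks your pointwise comparison: for $j\in\mathrm{supp}(x)\setminus\mathrm{supp}(\hat{x})$ one has $|x_j^*-\hat{x}_j^*|/|x_j-\hat{x}_j|\leq(|x_j|+2\lambda)/|x_j|$, which is unbounded as $x_j\to 0$, so the factor $\alpha^{-1}$ is lost exactly where it is needed. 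Reconciling these two requirements is the actual content of the error bound in \cite{schopfer2019linear}, which handles it through a restricted-strong-convexity argument for the dual objective $y\mapsto f^*(A^Ty)-\langle b,y\rangle$ and a projection onto its solution set; your proposal does not yet contain the idea that closes this loop.
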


Equipped with the above lemmas, we are ready to provide the second main result in this paper. The following result shows that our method is guaranteed to converge at least linearly as long as $q,\beta$ and singular values satisfy the corresponding constraint.

\begin{theorem}[corrupted and noisy case]
	\label{th:noisy-general}
	Considering that the corrupted and noisy linear system (\ref{3:linear}) with corrupted term $\|b^c\|_0=\beta m$ and noise $r$, assume that $\beta<q<1-\beta$. Let $A\in\mathbb{R}^{m\times n}$ be of full rank with unit-norm rows.
	If we use the constant stepsize, and then the iterates generated by Quantile-RaSKA satisfy
	\begin{equation}
	\begin{aligned}
	\label{resc:1}
	D_f^{x_{k+1}^*}\left(x_{k+1}, \hat{x}\right)=(1-c_1^*w+c_2^*w^2)D_f^{x_k^*}\left(x_k, \hat{x}\right)
	+(c_3^*w+c_4^*w^2)\|r\|_{\infty}^2,
	\end{aligned}
	\end{equation}
	where
	$$c_1^*=c_1\alpha\frac{\tilde{\sigma}_{\min }^2\sigma_{q-\beta,\min}^2}{ m\sigma_{\max}^2}-c_2\alpha\frac{\tilde{\sigma}_{\min }^2}{ m}-c_4\alpha\frac{\tilde{\sigma}_{\min }^2}{2\sqrt{mn}\sigma_{\max}},~
	c_2^*=c_3\alpha\frac{\tilde{\sigma}_{\min }^2\sigma_{\max}^2}{ m^2}+c_5\alpha\frac{\tilde{\sigma}_{\min }^2\sigma_{\max}}{2 m^{3/2}n^{1/2}},$$
	$$c_3^*=c_4\frac{\sqrt{n}\sigma_{\max}}{2\sqrt{m}},~c_4^*=c_5\frac{\sqrt{n}\sigma_{\max}^3}{2m^{3/2}}+c_6\frac{\sigma_{\max}^2}{m},$$ and $c_i,i=1,\cdots,6$ are positive constants only depend on $q,\beta$. To guarantee convergence, we require
	$$0<c_1^*w-c_2^*w^2<1,~ c_1^*>0.$$
	
\end{theorem}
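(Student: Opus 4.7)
The plan is to combine three ingredients already collected in the paper: the generalized proximal-point inequality of Lemma \ref{lemma2}, the residual-to-Bregman error bound of Lemma \ref{lemma3}, and the quantile control of Lemma \ref{lemma1}. I keep everything pathwise (no expectation is needed because the accepted set $T$ is a deterministic function of the current iterate and the stepsize is fixed), so the statement reduces to a one-step inequality.

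First, write the update of Algorithm \ref{a2} as $x_{k+1}^* = x_k^* - w\,g_k$ with $g_k := \frac{1}{\eta}\sum_{i\in T}(\langle a_i,x_k\rangle - b_i)\,a_i$, followed by $x_{k+1}=\nabla f^*(x_{k+1}^*)$. This is exactly the unconstrained minimizer of $\Phi(x)+D_f^{x_k^*}(x_k,x)$ with the linear surrogate $\Phi(x)=w\langle g_k,x\rangle$, so applying Lemma \ref{lemma2} with $y=\hat{x}$ and using the $\alpha$-strong convexity of $f$ (Lemma \ref{lemma:2}) to lower-bound $D_f^{x_k^*}(x_k,x_{k+1})$ gives the master inequality
\[
D_f^{x_{k+1}^*}(x_{k+1},\hat{x}) \leq D_f^{x_k^*}(x_k,\hat{x}) - w\,\langle g_k, x_k-\hat{x}\rangle + \frac{w^2}{2\alpha}\|g_k\|^2.
\]

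Next, split the accepted index set as $T = T_g \sqcup T_c$ where $T_c = T\cap\operatorname{supp}(b^c)$ and $T_g=T\setminus T_c$, so that $|T_c|\leq \beta m$ and $|T_g|\geq (q-\beta)m$. For $i\in T_g$ one has $\langle a_i,x_k\rangle - b_i = \langle a_i,x_k-\hat{x}\rangle - r_i$, while for $i\in T_c$ the residual is at most $Q_k$ in absolute value by construction of $T$. Expanding $\langle g_k, x_k-\hat{x}\rangle$ and invoking Lemma \ref{lemma1} to bound $Q_k$ by a multiple of $\sigma_{\max}\|x_k-\hat{x}\|/\sqrt{m}+\|r\|_\infty$ produces the decomposition
\[
\langle g_k, x_k-\hat{x}\rangle \geq \tfrac{1}{\eta}\|A_{T_g,\cdot}(x_k-\hat{x})\|^2 - (\text{noise and corruption cross terms}),
\]
where the clean quadratic form is then bounded below by $\sigma_{q-\beta,\min}^2\|x_k-\hat{x}\|^2$ and converted to $D_f^{x_k^*}(x_k,\hat{x})$ via Lemma \ref{lemma3}, which yields $\|x_k-\hat{x}\|^2\geq (\alpha\tilde{\sigma}_{\min}^2/\sigma_{\max}^2)\,D_f^{x_k^*}(x_k,\hat{x})$ after the chain $D_f \leq \gamma \|A(x_k-\hat{x})\|^2 \leq \gamma\sigma_{\max}^2\|x_k-\hat{x}\|^2$. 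A parallel treatment of $\|g_k\|^2$, using $\|\sum_{i\in T}c_i a_i\|^2\leq \sigma_{\max}^2\sum_{i\in T} c_i^2$ and again splitting over $T_g$ and $T_c$, produces the $w^2$-coefficients. Cross terms of the form $\|x_k-\hat{x}\|\cdot\|r\|_\infty$ are handled by Young's inequality $2ab\leq \varepsilon a^2+\varepsilon^{-1}b^2$, with $\varepsilon$ chosen so that the $a^2$-part remains subordinate to the main progress term and the $b^2$-part is charged to $\|r\|_\infty^2$. Collecting coefficients of $w$ and $w^2$ and grouping the various $\tilde{\sigma}_{\min}^2$, $\sigma_{q-\beta,\min}^2$, $\sigma_{\max}$, $m$, $n$ factors yields exactly the constants $c_1^*,c_2^*,c_3^*,c_4^*$ displayed in (\ref{resc:1}).

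The main obstacle is the bookkeeping of the stealth-corrupted rows $T_c$: Lemma \ref{lemma1} controls $Q_k$ only through the global $\sigma_{\max}$, so each index in $T_c$ contributes to both the linear and quadratic $w$-coefficients and must be split consistently between the signal term $D_f(x_k,\hat{x})$ and the noise reservoir $\|r\|_\infty^2$. This is what forces the structural positivity $c_1^*>0$, mirroring the condition (\ref{condition2}) in the single-row theorem: the favourable factor $\tilde{\sigma}_{\min}^2\sigma_{q-\beta,\min}^2/(m\sigma_{\max}^2)$ must beat the corruption penalties proportional to $\sqrt{\beta/(1-q-\beta)}$ inherited from the quantile bound. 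A secondary (and expected) subtlety is that the extrapolated stepsize $w$ enters quadratically through $\|g_k\|^2$, so the certified convergence interval $0<c_1^*w-c_2^*w^2<1$ is exactly the range where the quadratic $\|g_k\|^2$ error has not yet overtaken the linear descent gain.
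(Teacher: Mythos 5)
Your proposal is correct and follows essentially the same route as the paper's proof: the master one-step inequality from Lemma \ref{lemma2} (the paper phrases it via $\Phi(x)=\langle x_k^*-x_{k+1}^*,x-x_k\rangle$ and an auxiliary point $x_k^{\delta}$, but this reduces to your version with $y=\hat{x}$), the split of $T$ into clean and corrupted indices, the quantile bound of Lemma \ref{lemma1} for the corrupted rows, Young's inequality with weight $1/\sqrt{n}$ for the cross terms, and Lemma \ref{lemma3} to convert $\|x_k-\hat{x}\|^2$ into the Bregman distance. The only cosmetic difference is your $w^2/(2\alpha)$ factor on $\|g_k\|^2$ where the paper uses $w^2/2$ (the strong convexity constant of $f$ is $1$), which still gives a valid, if slightly looser, bound.
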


When the linear system is only corrupted, we can obtain a similar convergence result.
\begin{theorem}[corrupted case]
	\label{th:corrupt2}
	Considering that the corrupted linear system (\ref{3:linear}) with corrupted term $\|b^c\|_0=\beta m$, assume that $\beta<q<1-\beta$. Let $A\in\mathbb{R}^{m\times n}$ be of full rank with unit-norm rows. If
	\begin{equation}
	\label{th1:16}
	0<\frac{w}{\gamma qm}\left( \frac{\sigma_{q-\beta,\min}^2}{\sigma_{\max}^2}
	-
	\frac{\sqrt{\beta}}{ \sqrt{1-q-\beta}}\right)
	-\frac{w^2\sigma_{\max}^2}{\gamma q^2 m^2}
	\left(
	1+\frac{\sqrt{\beta}}{\sqrt{1-q-\beta}}
	\right)^2<1,
	\end{equation}
	then the iterates of Quantile-RaSKA applied to the system satisfy
	\begin{equation}
	\begin{aligned}
	\label{th1:18}
	&~~~~D_f^{x_{k+1}^*}\left(x_{k+1}, \hat{x}\right)\\
	&\leq \left[1
	-\frac{w}{\gamma qm}\left( \frac{\sigma_{q-\beta,\min}^2}{\sigma_{\max}^2}
	-
	\frac{\sqrt{\beta}}{ \sqrt{1-q-\beta}}\right)
	+\frac{w^2\sigma_{\max}^2}{\gamma q^2 m^2}
	\left(
	1+\frac{\sqrt{\beta}}{\sqrt{1-q-\beta}}
	\right)^2
	\right]
	D_f^{x_k^*}\left(x_k, \hat{x}\right).
	\end{aligned}
	\end{equation}
\end{theorem}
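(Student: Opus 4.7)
The plan is to view each Quantile-RaSKA update as a Bregman proximal minimization of a linear surrogate and apply the three-point Lemma~\ref{lemma2}. Writing $g_k=\tfrac{w}{\eta}A_{T_k}^{T}r_k$ with $r_k=A_{T_k}x_k-b_{T_k}$ and $\eta=qm$, the update $x_{k+1}^{*}=x_k^{*}-g_k$, $x_{k+1}=\nabla f^{*}(x_{k+1}^{*})$ is exactly the minimizer of $\langle g_k,x\rangle+D_f^{x_k^{*}}(x_k,x)$. Applying Lemma~\ref{lemma2} with $\Phi(x)=\langle g_k,x\rangle$ at $y=\hat x$, and combining the Bregman identity $\langle g_k,x_k-x_{k+1}\rangle-D_f^{x_k^{*}}(x_k,x_{k+1})=D_f^{x_{k+1}^{*}}(x_{k+1},x_k)$ with the $1$-smoothness of $f^{*}$ (the dual of the $1$-strongly convex $f=\lambda\|\cdot\|_1+\tfrac12\|\cdot\|^{2}$), which gives $D_f^{x_{k+1}^{*}}(x_{k+1},x_k)\le\tfrac12\|g_k\|^{2}$, I would first derive the workhorse bound
\[
D_f^{x_{k+1}^{*}}(x_{k+1},\hat x)\le D_f^{x_k^{*}}(x_k,\hat x)+\langle g_k,\hat x-x_k\rangle+\tfrac12\|g_k\|^{2}.
\]

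Next I would split $T_k$ into the uncorrupted indices $T_k^{g}$ (of size at least $(q-\beta)m$) and the corrupted indices $T_k^{b}$ (of size at most $\beta m$). With $e_k=x_k-\hat x$ and the identity $r_k=A_{T_k}e_k-b^{c}_{T_k}$ (where $b^{c}$ vanishes on $T_k^{g}$), one gets
\[
-\langle g_k,\hat x-x_k\rangle=\tfrac{w}{\eta}\bigl(\|A_{T_k^{g}}e_k\|^{2}+\langle r_{k,T_k^{b}},A_{T_k^{b}}e_k\rangle\bigr).
\]
The first summand is bounded below by $\sigma_{q-\beta,\min}^{2}\|e_k\|^{2}$ by definition of $\sigma_{q-\beta,\min}$; the cross term is controlled by Cauchy--Schwarz together with the noiseless $q$-quantile argument behind Lemma~\ref{lemma1}, which yields $|r_{k,i}|\le Q_k\le\sigma_{\max}\|e_k\|/\sqrt{(1-q-\beta)m}$ for $i\in T_k$, hence $\|r_{k,T_k^{b}}\|\le\sqrt{\beta/(1-q-\beta)}\,\sigma_{\max}\|e_k\|$ and $|\langle r_{k,T_k^{b}},A_{T_k^{b}}e_k\rangle|\le\sqrt{\beta/(1-q-\beta)}\,\sigma_{\max}^{2}\|e_k\|^{2}$. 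For the quadratic piece I would use $\|g_k\|^{2}\le\tfrac{w^{2}\sigma_{\max}^{2}}{\eta^{2}}\|r_k\|^{2}$ and the orthogonal decomposition $\|r_k\|^{2}=\|A_{T_k^{g}}e_k\|^{2}+\|r_{k,T_k^{b}}\|^{2}$ together with $\sqrt{a^{2}+b^{2}}\le a+b$, to obtain $\|r_k\|\le(1+\sqrt{\beta/(1-q-\beta)})\sigma_{\max}\|e_k\|$, giving the desired $(1+\sqrt{\beta/(1-q-\beta)})^{2}\sigma_{\max}^{2}\|e_k\|^{2}$ factor after squaring.

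The last step is to convert the $\sigma_{\max}^{2}\|e_k\|^{2}$ factors into the Bregman distance via Lemma~\ref{lemma3}: since $\sigma_{\max}^{2}\|e_k\|^{2}\ge\|Ae_k\|^{2}\ge D_f^{x_k^{*}}(x_k,\hat x)/\gamma$, and hypothesis~\eqref{th1:16} ensures the bracketed factor $\sigma_{q-\beta,\min}^{2}/\sigma_{\max}^{2}-\sqrt{\beta/(1-q-\beta)}$ is positive, this transfers the linear contribution into $-\tfrac{w}{\gamma qm}(\sigma_{q-\beta,\min}^{2}/\sigma_{\max}^{2}-\sqrt{\beta/(1-q-\beta)})D_f^{x_k^{*}}$, and applying the same estimate to the quadratic piece produces the stated $+\tfrac{w^{2}\sigma_{\max}^{2}(1+\sqrt{\beta/(1-q-\beta)})^{2}}{\gamma q^{2}m^{2}}D_f^{x_k^{*}}$ term, so that recursion~\eqref{th1:18} drops out. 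The main technical hurdle I anticipate is precisely this last conversion for the $w^{2}$ term: the naive strong-convexity bound $\|e_k\|^{2}\le 2D_f^{x_k^{*}}$ would introduce an extra $\sigma_{\max}^{2}$ factor rather than the $1/\gamma$ appearing in the statement, so one must route the quadratic estimate through Lemma~\ref{lemma3} to recover the sharper constant. Once the recursion is in hand, condition~\eqref{th1:16} is exactly what forces the resulting contraction factor to lie in $(0,1)$.
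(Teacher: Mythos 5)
Your proposal is correct and follows essentially the same route as the paper's proof of Theorem~\ref{th:noisy-general} (of which this statement is the $r=0$ specialization): the three-point inequality of Lemma~\ref{lemma2} gives the workhorse bound $D_f^{x_{k+1}^*}(x_{k+1},\hat x)\le D_f^{x_k^*}(x_k,\hat x)+\langle g_k,\hat x-x_k\rangle+\tfrac12\|g_k\|^2$, the block is split into uncorrupted and corrupted indices, the noiseless quantile bound behind Lemma~\ref{lemma1} controls the corrupted residuals, and Lemma~\ref{lemma3} converts $\|x_k-\hat x\|^2$ into the Bregman distance. One small repair to the last step: the bound $\sigma_{\max}^2\|x_k-\hat x\|^2\ge D_f^{x_k^*}(x_k,\hat x)/\gamma$ is a \emph{lower} bound, so it cannot be applied term-by-term to the positively weighted $w^2$ piece as written; you must first collect all coefficients of $\|x_k-\hat x\|^2$ into a single net coefficient, note that condition~\eqref{th1:16} forces this net coefficient to be negative, and only then apply the conversion once — which is exactly how the paper's argument proceeds and is implicit in your closing remark about~\eqref{th1:16}.
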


\begin{remark}
	The convergence of Quantile-RaSKA in corrupted case heavily depends on the condition (\ref{th1:16}). Intuitively, it holds when $\beta$ is enough small and $m$ is enough large.
\end{remark}

Although there is no special limit on the measurement matrix in Theorem \ref{th:noisy-general} and Theorem \ref{th:corrupt2}, the convergence result
is complicated because it is dependent on some singular values. In the following part, the specific type of
the random measurement matrix is considered to simplify the convergence result.

\subsection{Subgaussian case}
Here we restrict $A$ to the class of random matrices satisfying Assumption 1 and Assumption 2.
We can simplify Theorem \ref{th:noisy-general} by using the bound of singular values $\sigma_{\max },\sigma_{q-\beta,\min },\tilde{\sigma}_{\min}$.
\begin{lemma}
	\label{coro4:1}
	Recall the definition of $\tilde{\sigma}_{q-\beta,\min }$ and Lemma \ref{lemma:2.5}, and we have
	$$	
	\sigma_{q-\beta,\min }
	\geq
	\tilde{\sigma}_{q-\beta,\min }
	\geq
	c_L (q-\beta)^{\frac{3}{2}}\sqrt{\frac{m}{n}},~
	\tilde{\sigma}_{\min}
	\geq\tilde{\sigma}_{q-\beta,\min }
	\geq
	c_L (q-\beta)^{\frac{3}{2}}\sqrt{\frac{m}{n}},
	$$
	with probability at least $1-3\exp(-c_1(q-\beta)m)$.
\end{lemma}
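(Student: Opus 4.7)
The statement really packages two unrelated facts, so my plan is to split it and handle them separately. The first piece consists of the purely deterministic chains $\sigma_{q-\beta,\min} \geq \tilde{\sigma}_{q-\beta,\min}$ and $\tilde{\sigma}_{\min} \geq \tilde{\sigma}_{q-\beta,\min}$, which are comparisons of minima over nested families of submatrices and involve no probability at all. The second piece is the lower bound $\tilde{\sigma}_{q-\beta,\min} \geq c_L(q-\beta)^{3/2}\sqrt{m/n}$, which is exactly the content of the already-stated Lemma~\ref{lemma:2.5}; once I invoke it, the probability $1-3\exp(-c_1(q-\beta)m)$ in the conclusion is simply inherited.

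For the deterministic inequalities I plan to argue directly from the definitions. To show $\sigma_{q-\beta,\min} \geq \tilde{\sigma}_{q-\beta,\min}$, observe that the family used to define $\tilde{\sigma}_{q-\beta,\min}$ ranges over all pairs $(I,J)$ with $|I|=(q-\beta)m$ and $J\subset\{1,\ldots,n\}$, whereas $\sigma_{q-\beta,\min}$ only ranges over row subsets $I$ with $|I|=(q-\beta)m$. The latter family is obtained from the former by specializing $J=\{1,\ldots,n\}$, in which case $A_{I,J}=A_I$, so the infimum over the larger family is at most the infimum over the smaller one. The inequality $\tilde{\sigma}_{\min} \geq \tilde{\sigma}_{q-\beta,\min}$ is similar but uses the opposite specialization: for any admissible $J$ with $A_J\neq 0$, pick any row set $I$ with $|I|=(q-\beta)m$; then for every $x\in\mathbb{R}^{|J|}$, $\|A_{I,J}x\|\leq \|A_J x\|$ because the left side is obtained from the right by dropping coordinates, so the variational characterization $\sigma_{\min}(M)=\min_{\|x\|=1}\|Mx\|$ gives $\sigma_{\min}(A_{I,J})\leq\sigma_{\min}(A_J)$. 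Taking the minimum of both sides over $J$ (with $I$ chosen appropriately for each $J$) yields $\tilde{\sigma}_{q-\beta,\min}\leq \tilde{\sigma}_{\min}$.

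For the probabilistic piece, no new work is required: Lemma~\ref{lemma:2.5} already supplies $\tilde{\sigma}_{q-\beta,\min}\geq c_L(q-\beta)^{3/2}\sqrt{m/n}$ with probability at least $1-3\exp(-d_1(q-\beta)m)$ under the stated Assumptions 1--2 and tallness condition~\eqref{condition1}. I will simply relabel $d_1$ as $c_1$ to match the conclusion of the present lemma, and then chain the two deterministic inequalities with this bound to obtain the four inequalities stated.

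Honestly, I do not anticipate any real obstacle here — the lemma is more of a bookkeeping corollary collecting several comparisons in one place for use in the subgaussian analysis. The only mild subtlety is that the definitions of $\tilde{\sigma}_{\min}$ and $\tilde{\sigma}_{q-\beta,\min}$ should implicitly exclude zero submatrices (so that $\sigma_{\min}$ is nonzero and the minimum is well-posed); I will follow the convention of the paper on this point. If a referee pushed for rigor, the only calculation worth writing out is the one-line inequality $\|A_{I,J}x\|^2=\sum_{i\in I}\langle a_i|_J,x\rangle^2\leq \sum_{i=1}^m\langle a_i|_J,x\rangle^2=\|A_J x\|^2$ used in the row-removal step.
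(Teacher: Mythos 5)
Your proposal is correct and matches the paper's (implicit) argument: the paper offers no separate proof, treating the lemma as immediate from the definitions together with Lemma~\ref{lemma:2.5}, which is exactly what you do — the two deterministic comparisons follow from specializing $J=\{1,\ldots,n\}$ and from the row-removal inequality $\|A_{I,J}x\|\le\|A_Jx\|$, and the probabilistic bound is inherited verbatim from Lemma~\ref{lemma:2.5}. No gaps.
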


\begin{lemma}
	\label{lemma4:3}
	Assume that the random matrix $A$ satisfies Assumption 1 and Assumption 2. Let the corrupted parameter $\beta$ and the acceptable parameter $q$ satisfy $\beta<q<1-\beta$ and let $A$ be tall enough in (\ref{condition1}). For any $k\in\mathbb{N}$, the iterates $x_k$ generated by Quantile-RaSKA satisfy
	\begin{equation}
	\label{step:1}
	D_f^{x_{k+1}^*}\left(x_{k+1}, \hat{x}\right)\leq
	\left(
	1-\frac{a_1}{n}w+\frac{a_2}{n^2}w^2
	\right)D_f^{x_{k}^*}\left(x_{k}, \hat{x}\right)
	+\left(
	a_3w+\frac{a_4}{n}w^2
	\right)\|r\|_{\infty}^2,
	\end{equation}
	where $a_i,i=1,\cdots,4$ are positive constants only with respect to $q,\beta$.
\end{lemma}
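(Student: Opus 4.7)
The plan is to start from the deterministic recursion (\ref{resc:1}) of Theorem~\ref{th:noisy-general} and, in the subgaussian regime, substitute the singular-value estimates furnished by Lemma~\ref{lemma:2.5} and Lemma~\ref{coro4:1}. Since under Assumption~1 and Assumption~2 all of $\sigma_{\max}$, $\tilde{\sigma}_{\min}$, $\tilde{\sigma}_{q-\beta,\min}$, and $\sigma_{q-\beta,\min}$ are of order $\sqrt{m/n}$ with high probability, each of the coefficients $c_1^{*},\dots,c_4^{*}$ in Theorem~\ref{th:noisy-general} collapses to an explicit power of $1/n$ times a constant depending only on $q$, $\beta$, and the universal subgaussian constants.

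Concretely, I would first record the high-probability event on which
\[
\sigma_{\max}\le c_K\sqrt{m/n},\qquad \min\{\tilde{\sigma}_{\min},\,\tilde{\sigma}_{q-\beta,\min},\,\sigma_{q-\beta,\min}\}\ge c_L(q-\beta)^{3/2}\sqrt{m/n},
\]
and note that $\tilde{\sigma}_{\min}\le\sigma_{\max}\le c_K\sqrt{m/n}$ holds on the same event. Substituting these into $c_1^{*}$ from Theorem~\ref{th:noisy-general}, the positive term $c_1\alpha\tilde{\sigma}_{\min}^2\sigma_{q-\beta,\min}^2/(m\sigma_{\max}^2)$ is lower bounded by a multiple of $(q-\beta)^6/n$, while each of the two subtractive terms $c_2\alpha\tilde{\sigma}_{\min}^2/m$ and $c_4\alpha\tilde{\sigma}_{\min}^2/(2\sqrt{mn}\sigma_{\max})$ is upper bounded by a multiple of $1/n$; grouping produces $c_1^{*}\ge a_1/n$. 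Parallel substitutions give $c_2^{*}\le a_2/n^2$ (the product $\tilde{\sigma}_{\min}^2\sigma_{\max}^2/m^2$ scales as $n^{-2}$ and so does $\tilde{\sigma}_{\min}^2\sigma_{\max}/(m^{3/2}n^{1/2})$), $c_3^{*}\le a_3$ (the prefactor $\sqrt{n/m}$ exactly cancels $\sigma_{\max}\sim\sqrt{m/n}$), and $c_4^{*}\le a_4/n$ (both $\sqrt{n}\sigma_{\max}^3/m^{3/2}$ and $\sigma_{\max}^2/m$ scale as $n^{-1}$). Plugging these four bounds into (\ref{resc:1}) yields (\ref{step:1}) directly.

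The main obstacle is verifying that $a_1>0$, i.e., that the positive lower bound on $c_1^{*}$ strictly dominates the sum of the two negative upper bounds. After substitution this reduces to a dimensionless inequality of the form $c_1 c_L^4(q-\beta)^6/c_K^2 > c_2 c_K^2 + c_4 c_K/2$, which plays the same role as the gap condition (\ref{condition2}) in Theorem~\ref{key} and can be secured by taking the corruption level $\beta$ sufficiently small relative to the subgaussian incoherence of $A$, so that the $(q-\beta)^6$ factor is not too small. Once $a_1>0$ is guaranteed, Theorem~\ref{th:noisy-general} applies on the intersection of the two high-probability events of Lemma~\ref{lemma:2.5}, and the recursion (\ref{step:1}) holds with the stated constants $a_1,\dots,a_4$ depending only on $q,\beta$ (and on the universal constants $c_L,c_K,\alpha$).
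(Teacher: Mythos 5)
Your proposal is correct and follows exactly the route the paper intends: it states that Lemma \ref{lemma4:3} is obtained directly from Theorem \ref{th:noisy-general} and Lemma \ref{coro4:1}, and your substitution of the high-probability bounds $\sigma_{\max}\le c_K\sqrt{m/n}$ and $\tilde{\sigma}_{\min},\tilde{\sigma}_{q-\beta,\min},\sigma_{q-\beta,\min}\ge c_L(q-\beta)^{3/2}\sqrt{m/n}$ into $c_1^*,\dots,c_4^*$ reproduces the claimed scalings $a_1/n$, $a_2/n^2$, $a_3$, $a_4/n$. Your remark that the positivity of $a_1$ requires a dimensionless gap condition analogous to (\ref{condition2}) is a point the paper leaves implicit (it is folded into the standing requirement $c_1^*>0$ of Theorem \ref{th:noisy-general}), so your treatment is, if anything, slightly more careful than the omitted proof.
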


We can directly obtain Lemma \ref{lemma4:3} from Theorem \ref{th:noisy-general} and Lemma \ref{coro4:1}, so we omit its proof here.
Viewing the convergence rate in (\ref{step:1}) as a quadratic function of stepsize $w$, the optimal stepsize is $w^*=\frac{a_1}{2a_2} n$ and the corresponding optimal convergence result is
\begin{equation}
\label{resc:2}
D_f^{x_{k+1}^*}\left(x_{k+1}, \hat{x}\right)
\leq
(1-\frac{a_1^2}{4a_2})D_f^{x_k^*}\left(x_k, \hat{x}\right)
+\left(\frac{a_1a_3}{2a_2}+\frac{a_1^2a_4}{4a_2^2}\right)n \|r\|_{\infty}^2.
\end{equation}
In the subgaussian case, the optimal constant stepsize is proportional to $n$, which is relatively different from the empirical stepsize of $(0,2)$. Therefore, a large stepsize can accelerate the convergence rate of our proposed method. What should be
emphasized is that the convergence rate of Quantile-RaSKA is independent
of the dimensions of the measurement matrix $m,n$. Compared with the convergence rate of Quantile-RaSK in (\ref{subgaussian:1}), we find that Quantile-RaSKA is faster than Quantile-RaSK by a factor of $n$. In conclusion, the averaged block technique can greatly improve the convergence rate of Quantile-RaSK in the subgaussian case. Furthermore, the residual term is proportional to $n$, leading
to a larger convergence horizon for inconsistent linear systems when using the optimal stepsize.

\section{Numerical experiments}
We divide our experiments into two sections to test the performance of Quantile-RaSK and Quantile-RaSKA. 
In experiments, we generate measurement matrices $A$ in two ways: Gaussian models by MATLAB function 'randn' and real-data models by ATRtools toolbox \cite{hansen2007regularization}. Then, we obtain $\tilde{b}= A\hat{x}$, where $\hat{x}$ is created by the MATLAB function 'sparserandn' with sparsity $s$.
Furthermore, the corrupted and noisy data $b=\tilde{b}+b^c+r$, where noise $r\in \mathbb{R}^m$ and corruptions $b^c\in \mathbb{R}^m$ are taken from a uniform distribution.
We first explore the effect of corruptions $b^c$, stepsize $w$, $\beta$ and $q$ on our proposed algorithms. Second, we compare our proposed algorithms with Quantile-RKA by carrying out two experiments on simulated data and real-world data, respectively.
We take the median error of 100 trials in every trial and denote
$$\text{error}(k)=\frac{\|x_k-\hat{x}\|}{\|\hat{x}\|}$$
as the relative residual at the $k$-th iterate. When the maximum number of iterations $N$ is reached, the algorithm stops.

All experiments are performed with MATLAB (version R2021b) on a personal computer with 2.80-GHZ CPU(Intel(R) Core(TM) i7-1165G7), 16-GB memory, and Windows operating system (Windows 10).

\subsection{The effect of parameters }
In this subsection, we use the Gaussian model, i.e., a linear system with entries sampled from i.i.d. $N(0,1)$.

\subsubsection{The effect of corruptions \texorpdfstring{$b^c$}{}}
We perform experiments on $10000\times 500$ standardized matrices. Let $q=0.7,\beta=0.2,s=40,\lambda=1,w=1.5n$, the corruptions $b^c$ taken from $U(-k,k)$ for a range of $k$, and the noise $r$ comes from $U(-0.02,0.02)$.
We compare the performance of Quantile-ERaSK and Quantile-RaSKA for solving corrupted and noisy linear systems, respectively.

\begin{figure}[H]
	\centering
	\subfigure[The effect of corruptions on Quantile-ERaSK]{
		\label{exp1:a}
		\includegraphics[width=0.48\linewidth,height=0.32\textwidth]{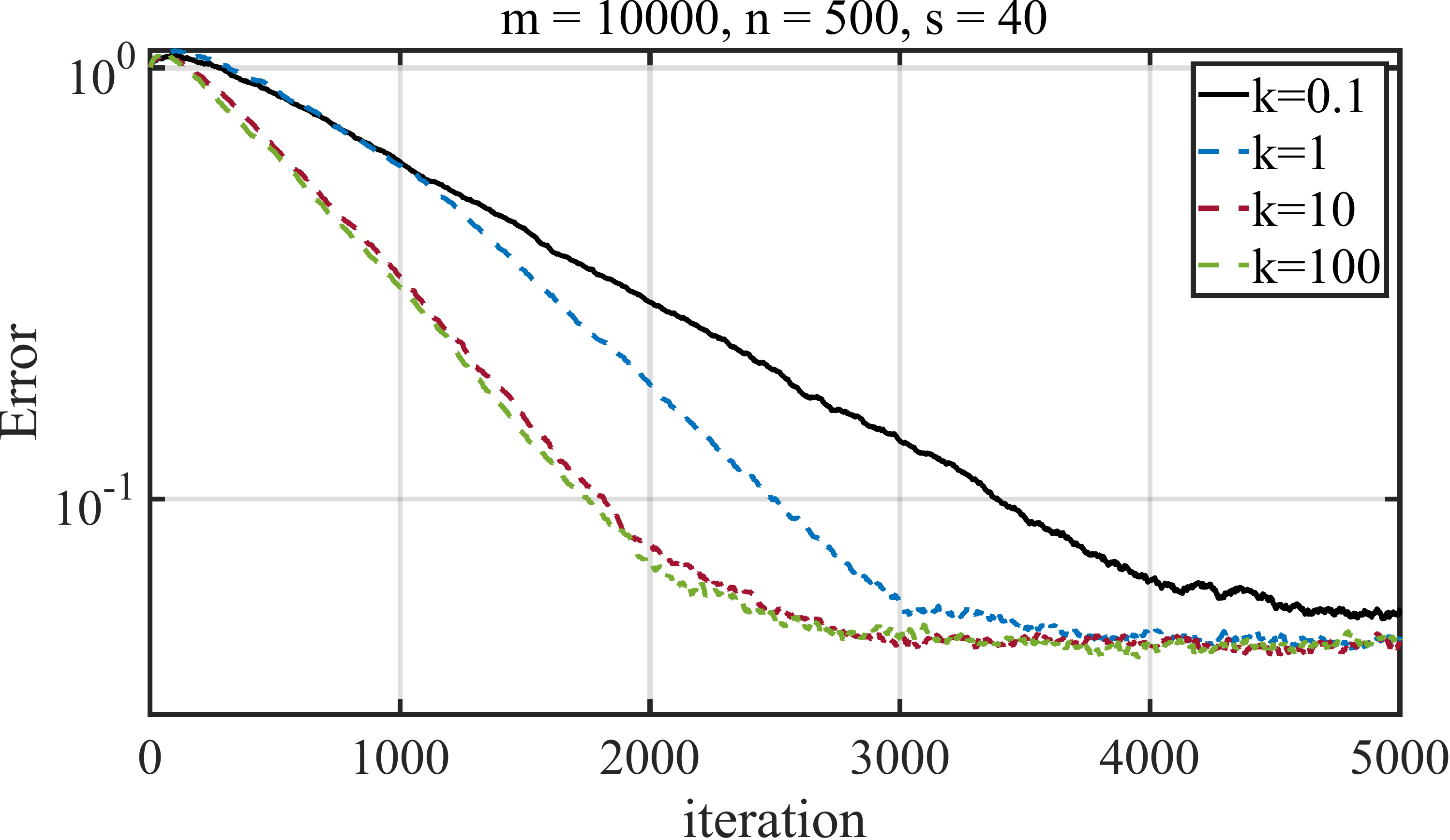}}
	\subfigure[The effect of corruptions on Quantile-RaSKA]{
		\label{exp1:b}
		\includegraphics[width=0.48\linewidth,height=0.32\textwidth]{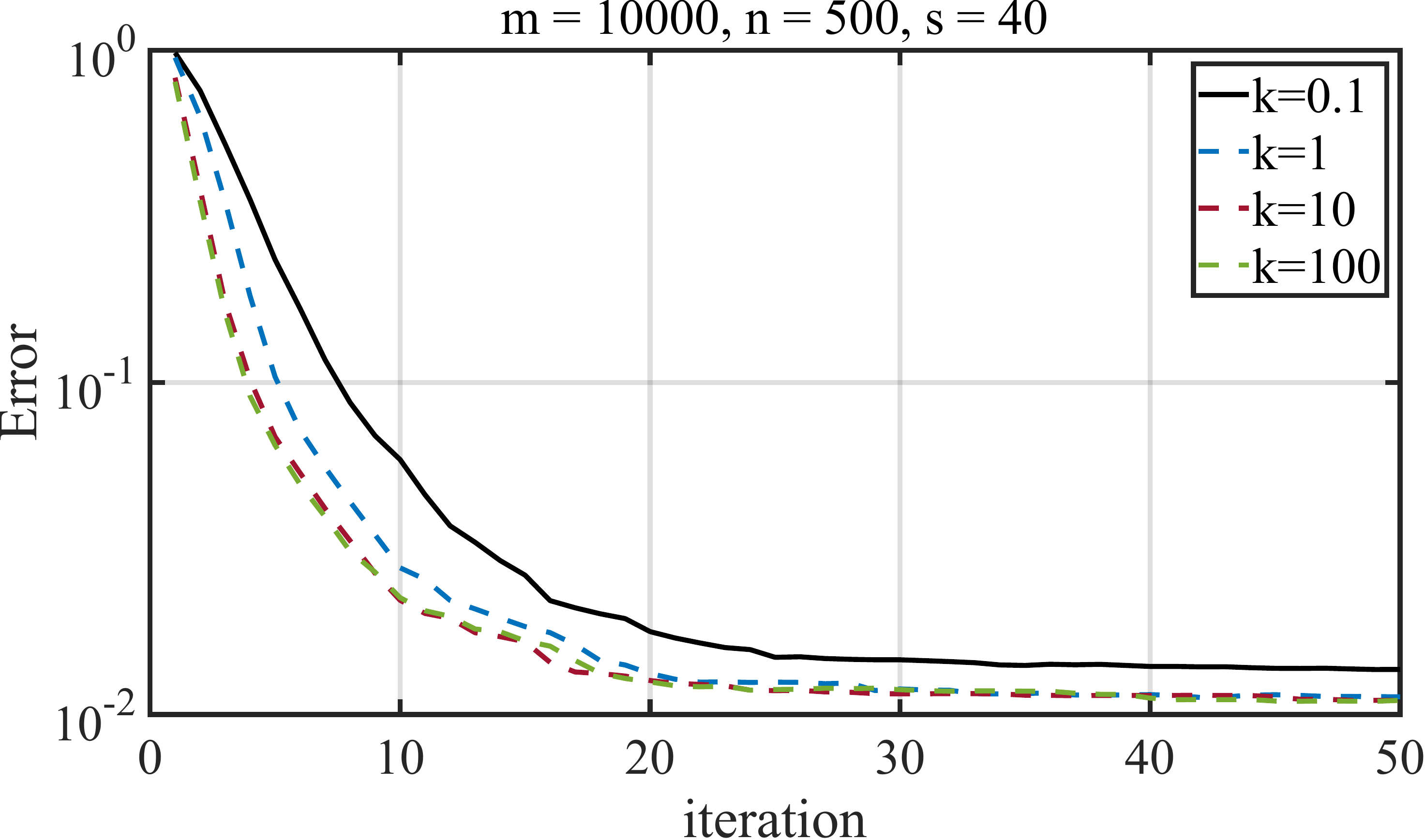}}
	\caption{The effect of the scale of corruptions on Quantile-ERaSK and Quantile-RaSKA}
	\label{fig:1}
\end{figure}

As shown in Fig. \ref{fig:1}, the Quantile-RaSKA method requires fewer iterations to achieve the same accuracy as the Quantile-ERaSK method. Apparently, the Quantile-RaSKA method exactly outperforms the Quantile-ERaSK method. Furthermore, the convergence rate of both Quantile-ERaSK and Quantile-RaSKA can be slightly improved as the
parameter $k$ increases. Informally, when corruptions are large relative to the noise, corruptions are easier to detect and hence our methods converge faster.

\subsubsection{The effect of stepsize \texorpdfstring{$w$}{}}
We use a constant stepsize for the linear systems with $A\in\mathbb{R}^{m\times n}$, where $m=10000,n=100:100:400$. Let $\beta=0.2,q=0.7,s=10,\lambda=1$ and record the relative residual at 20-th iterate in noiseless case and 1000-th iterate in noisy case. The corruptions $b^c$ comes from uniform distribution $U(-100,100)$ and the noise $r$ comes from $U(-0.02,0.02)$ in Fig. \ref{exp2_noise}.

According to Fig. \ref{fig:2}, it is apparent that the optimal stepsize is proportional to $n$ for the Gaussian models. In the noiseless and corrupted case, the optimal stepsize basically is around 1.6$n$ to 1.8$n$ for each $n$, while the optimal stepsize is around 0.3$n$ to 3.3$n$ in the noisy and corrupted case. The phenomenon verifies Lemma \ref{lemma4:3}, i.e., the optimal stepsize appears to scale with the number of columns $n$.

\begin{figure}[H]
	\centering
	\subfigure[The errors using Quantile-RaSKA in noiseless case]{
		\label{exp2_noiseless}
		\includegraphics[width=0.48\linewidth,height=0.32\textwidth]{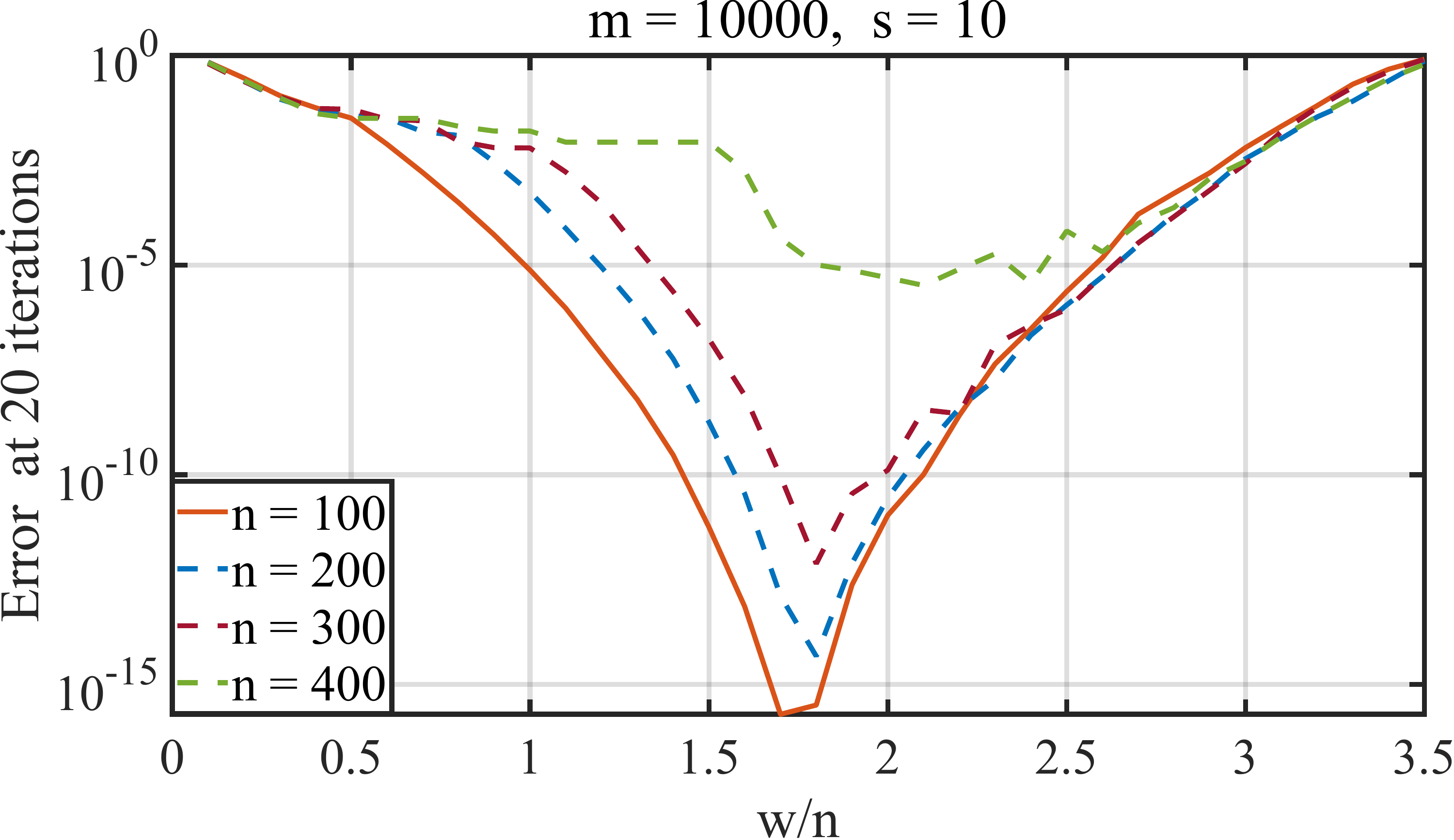}}
	\subfigure[The errors using Quantile-RaSKA in noisy case]{
		\label{exp2_noise}
		\includegraphics[width=0.48\linewidth,height=0.32\textwidth]{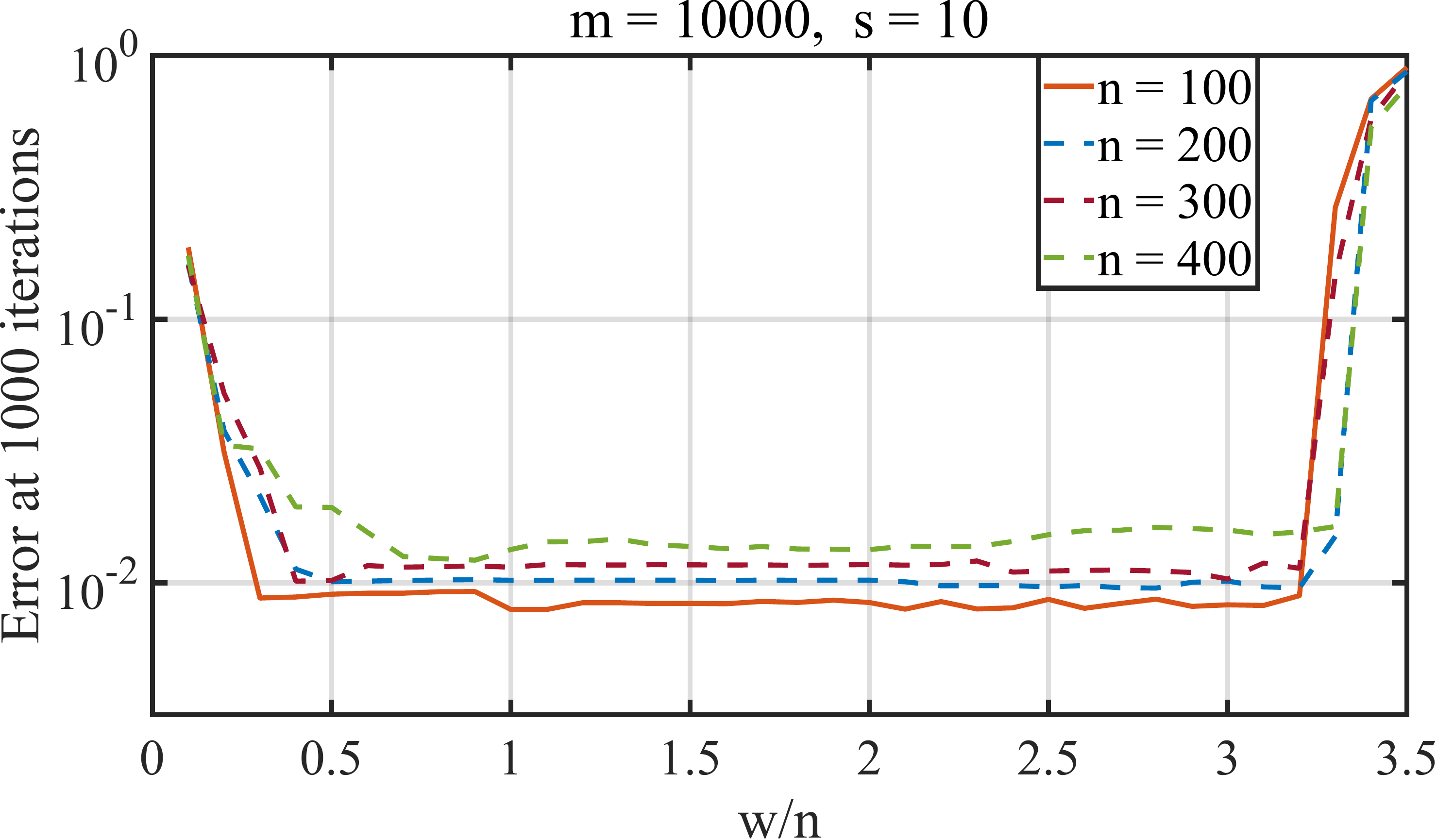}}
	\caption{The optimal stepsize of Quantile-RaSKA varying different $n$ for fixed $m=10000$}
	\label{fig:2}
\end{figure}

\subsubsection{The effect of \texorpdfstring{$\beta$}{} and \texorpdfstring{$q$}{}}
We have $\beta<q<1-\beta$ from the above theoretical analysis. Here, we carry out experiments to explore the best relationship between $q$ and $\beta$. Let $A\in\mathbb{R}^{10000\times 200},s=10,\lambda=1,w=1.7n,\beta=0.1:0.1:0.5$ and $q=0.1:0.1:1$. The corruption $b^c$ comes from $U(-100,100)$ and the noise comes from $U(-0.02,0.02)$.
We respectively study optimal parameters for Quantile-ERaSK and Quantile-RaSKA in both noiseless and noisy cases. To ensure good recovery accuracy, we record the error at 2000 iterations for Quantile-ERaSK and the error at 40 iterations for Quantile-RaSKA.

\begin{figure}[H]
	\centering
	\subfigure[The errors using Quantile-ERaSK in noiseless case]{
		\includegraphics[width=0.48\linewidth,height=0.32\textwidth]{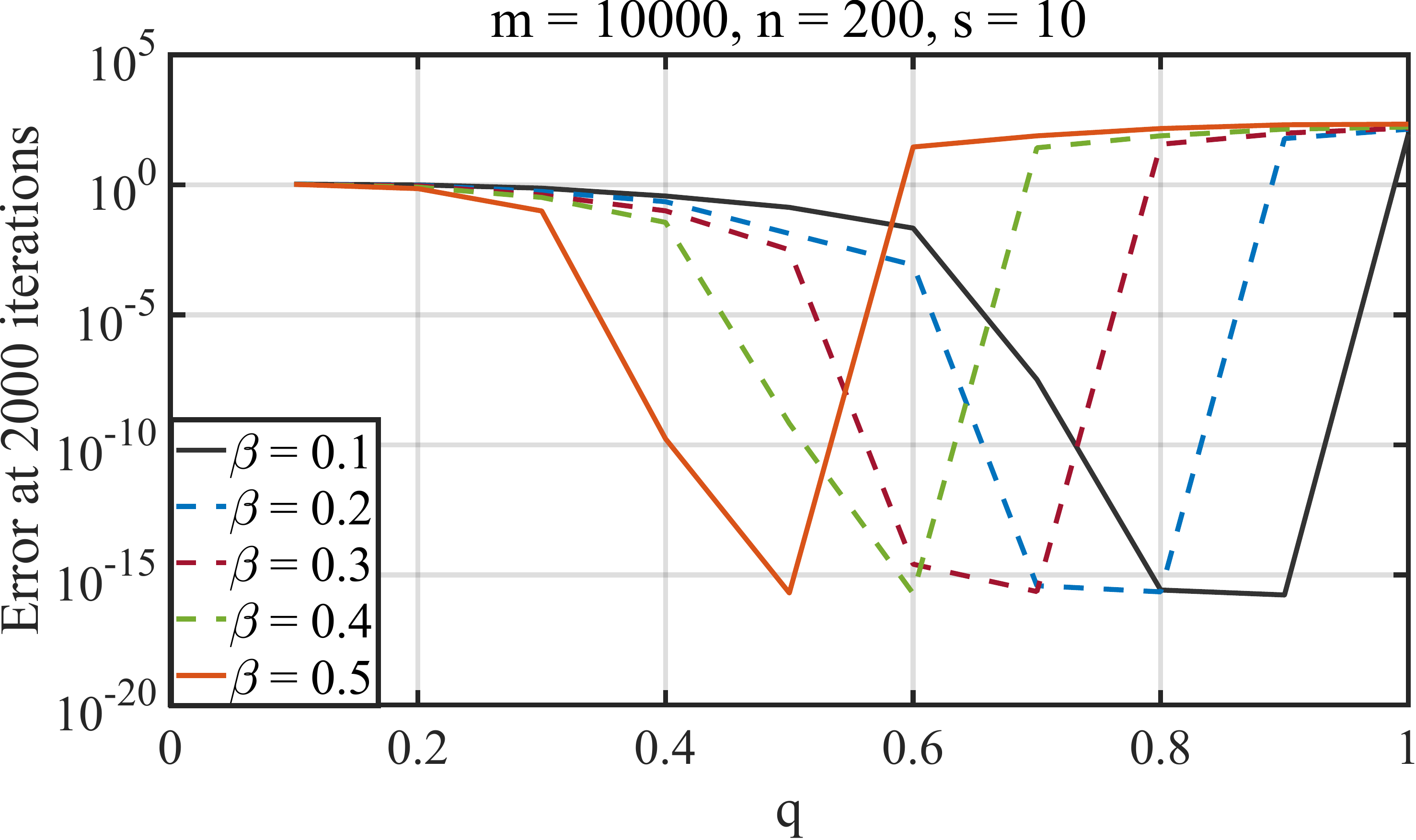}}
	\subfigure[The errors using Quantile-ERaSK in noisy case]{
		\includegraphics[width=0.48\linewidth,height=0.32\textwidth]{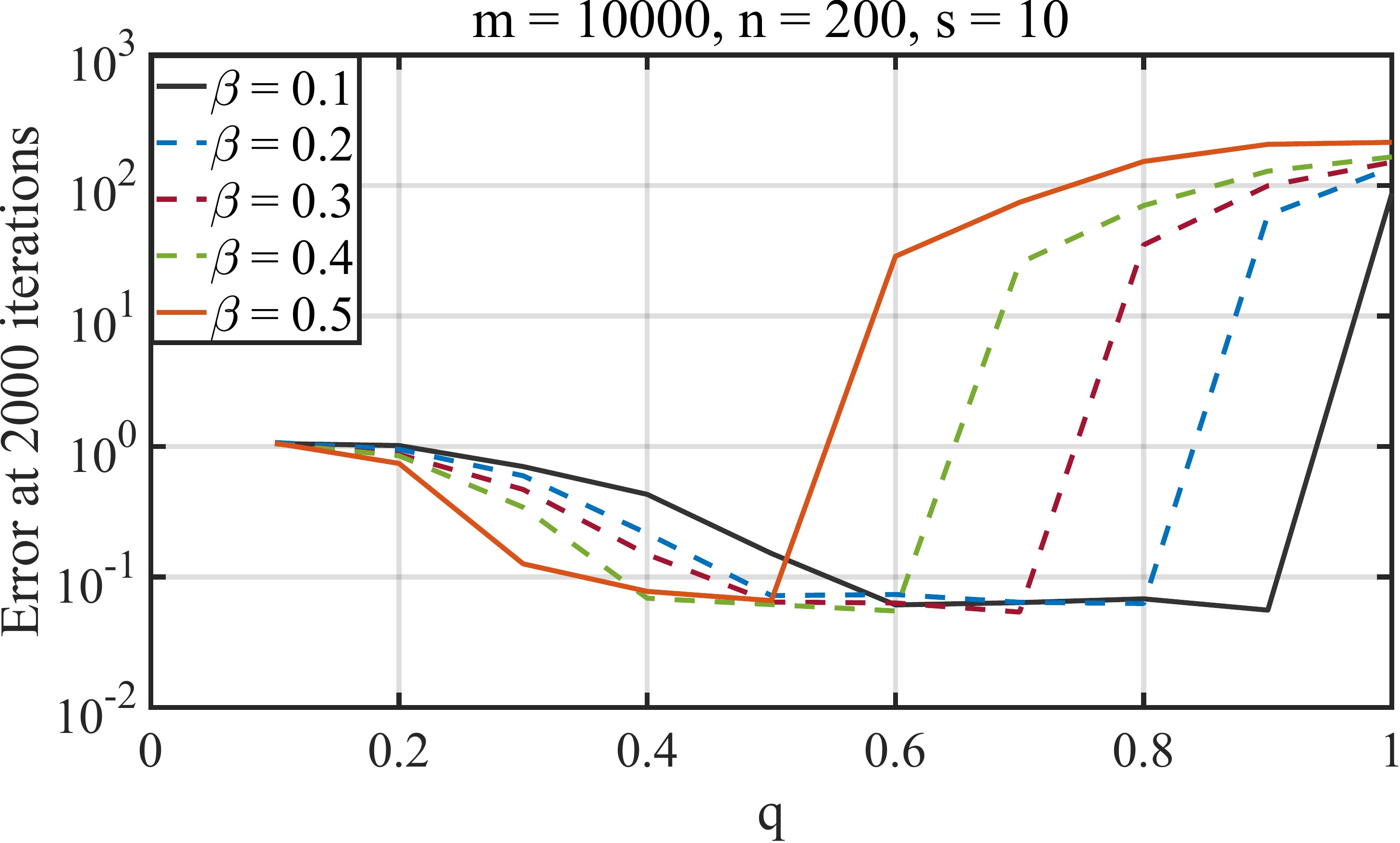}}
	\caption{The optimal $q$ of Quantile-ERaSK varying different corruption rates $\beta$}
	\label{fig:3}
\end{figure}

\begin{figure}[H]
	\centering
	\subfigure[The errors using Quantile-RaSKA in noiseless case]{
		\includegraphics[width=0.48\linewidth,height=0.32\textwidth]{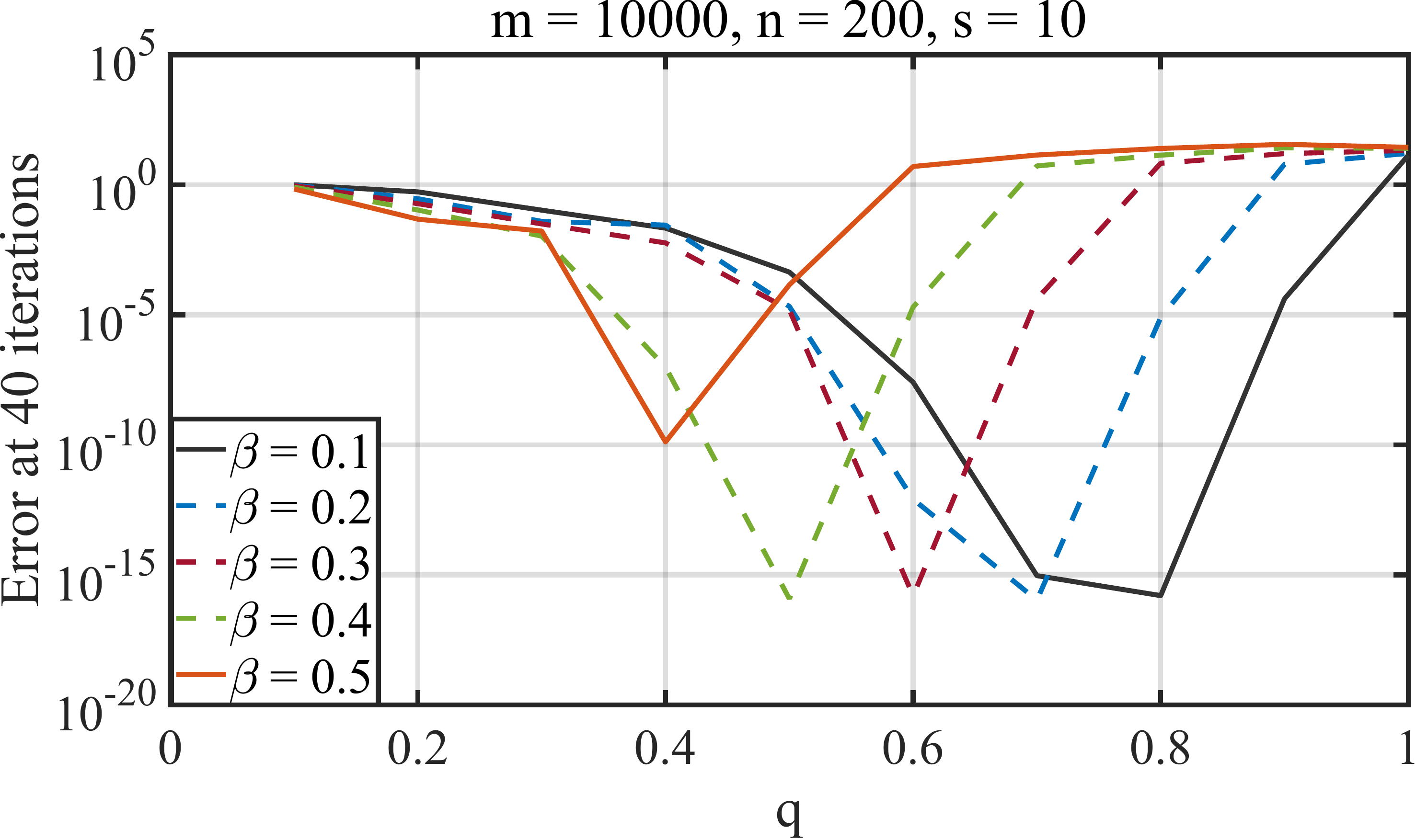}}
	\subfigure[The errors using Quantile-RaSKA in noisy case]{
		\includegraphics[width=0.48\linewidth,height=0.32\textwidth]{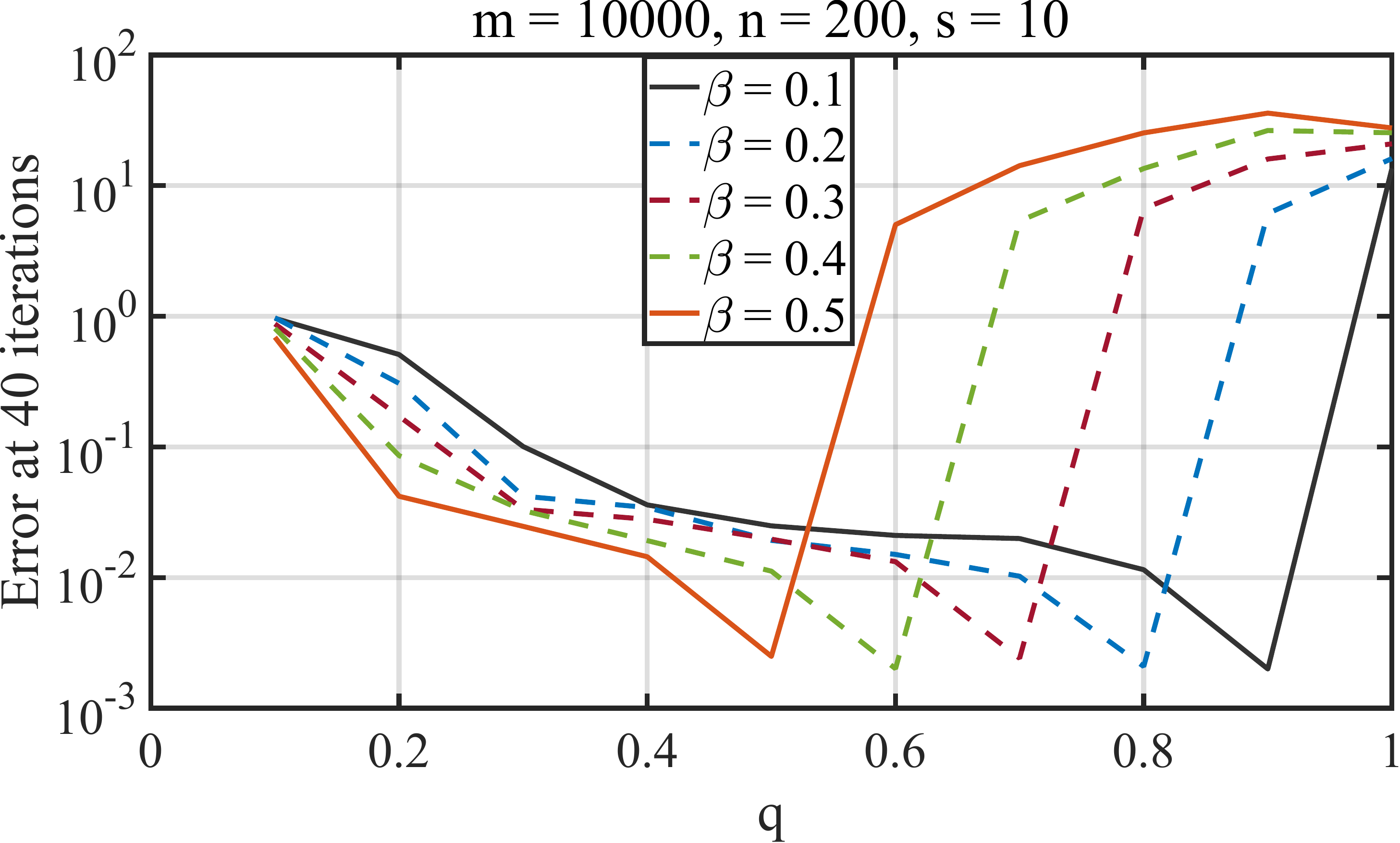}}
	\caption{The optimal $q$ of Quantile-RaSKA varying different corruption rates $\beta$}
	\label{fig:4}
\end{figure}

As indicated in Fig. \ref{fig:3} and Fig. \ref{fig:4}, the best $q$ takes $1-\beta$ for fixed $\beta$. Zooming in on the choice of $\beta$ and $q$, we find that the relative errors rapidly decrease when $q<1-\beta$ and continually increase when $q>1-\beta$. Therefore, we suggest taking $q$ slightly smaller than the suggested value $1-\beta$ in practical applications.

\subsection{Comparisions with different Quantile-based Kaczmarz variants}
In this subsection, we use two different kinds of models to compare the convergence rate and time complexity between Quantile-RKA, Quantile-RaSK and Quantile-RaSKA.

\subsubsection{The simulated model}

We construct the gaussian model with $A\in\mathbb{R}^{2000\times 200}$ and $s=10,\lambda=1,\beta=0.7,q=0.2$. The corruptions $b^c$ generates from $U(-100,100)$ and let the noise $r=0$. To ensure faster convergence, we take the step of Quantile-RaSKA and Quantile-RKA to be $w=1.7n$. The maxiter iteration is seted as 3000. 
According to Fig. \ref{fig:5}, we have that Quantile-RaSKA needs extremely few iterations and time compared to Quantile-RaSK with exact step (Quantile-ERaSK) and Quantile-RKA. Thus, Quantile-RaSKA outperforms other existing algorithms in terms of convergence rate and time complexity.

\begin{figure}[H]
	\centering
	\subfigure[The errors using Quantile-RaSKA in noiseless case]{
		\includegraphics[width=0.48\linewidth,height=0.32\textwidth]{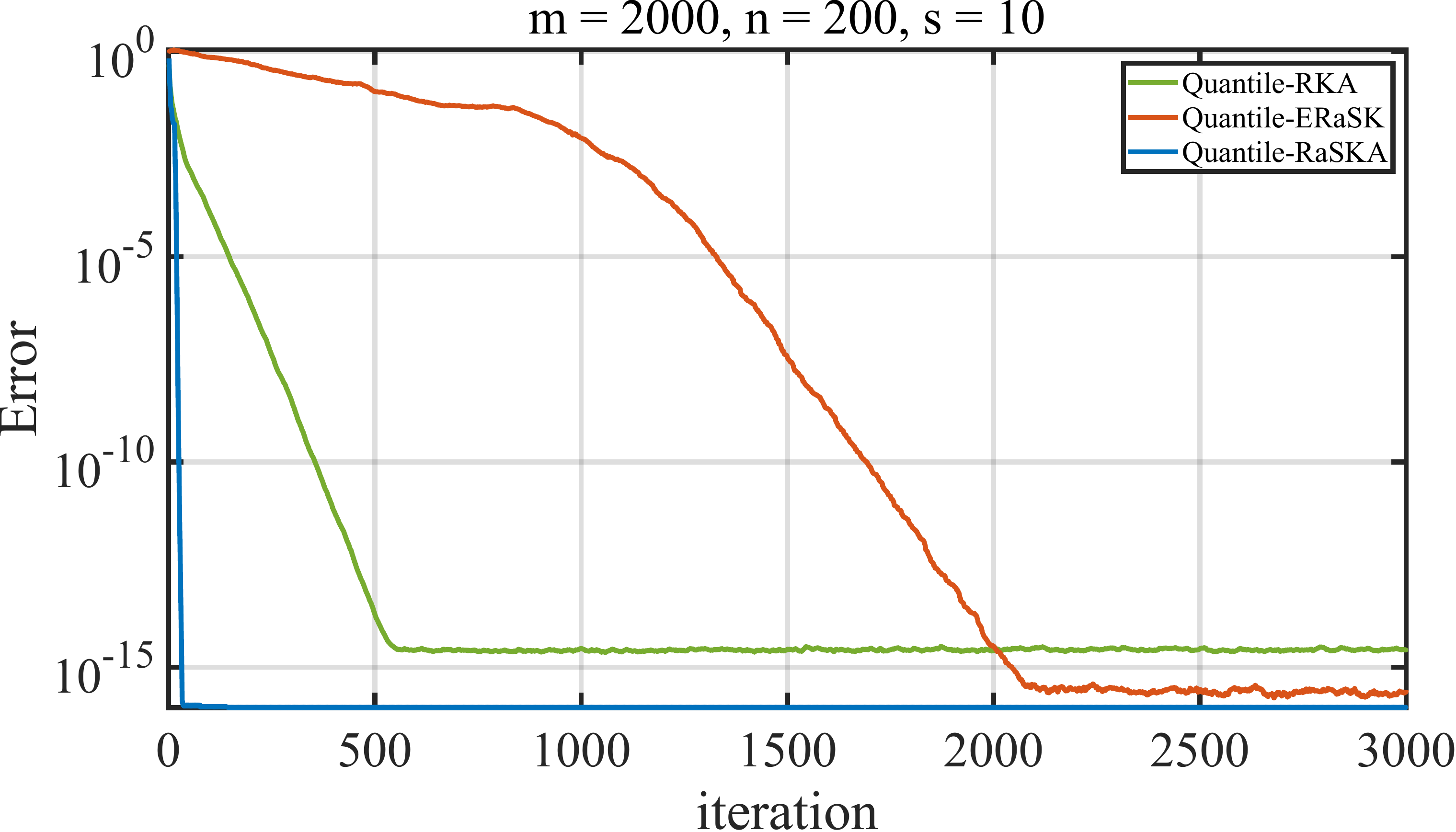}}
	\subfigure[The time using Quantile-RaSKA in noiseless case]{
		\includegraphics[width=0.48\linewidth,height=0.32\textwidth]{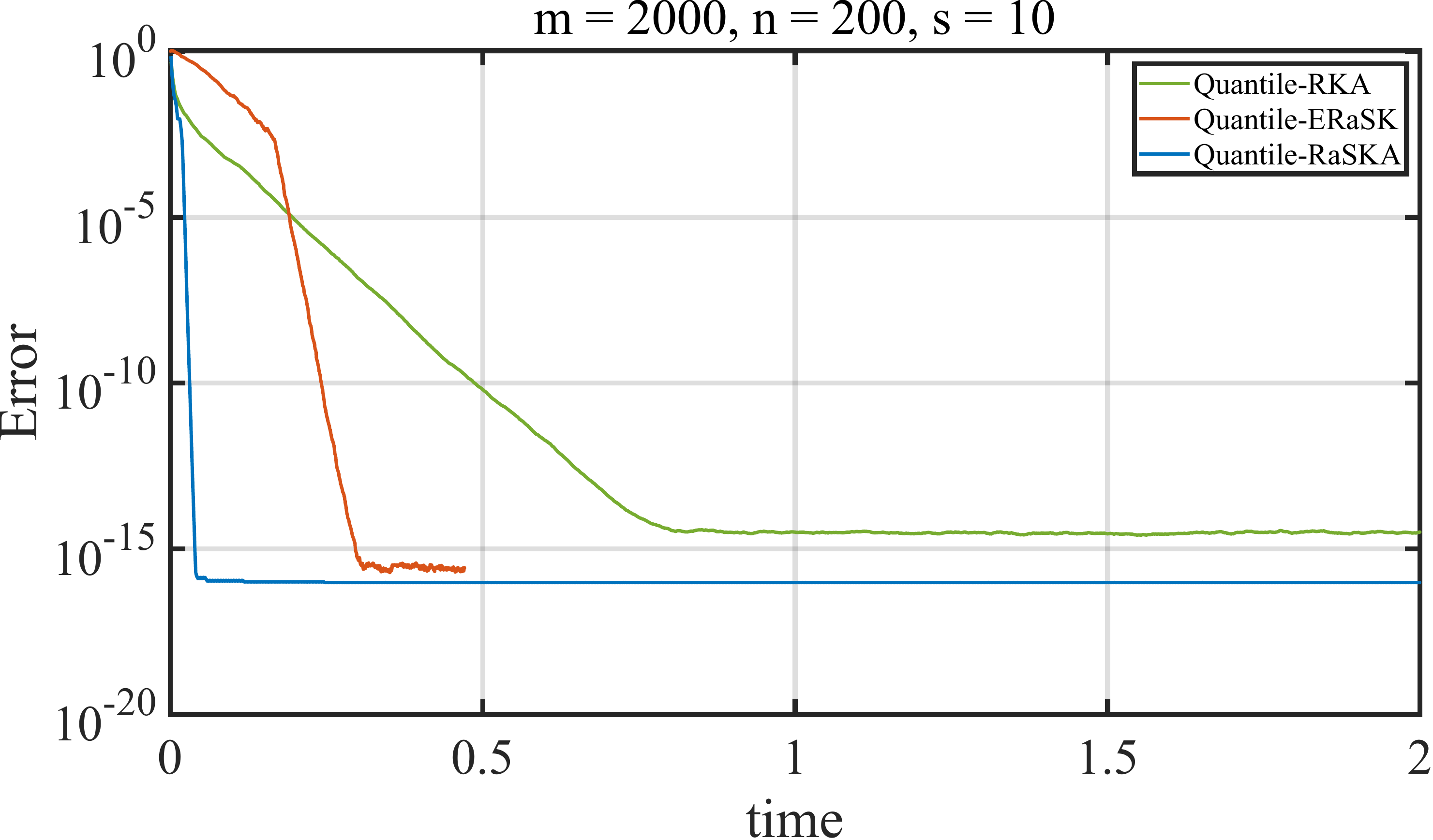}}
	\caption{The performance of different methods on simulated data}
	\label{fig:5}
\end{figure}

\subsubsection{The real-world data}

In this trial, we turn to study an academic tomography problem. The matrix $A\in\mathbb{R}^{1378\times 900}$ and true solution $\hat{x}\in\mathbb{R}^{900}$ are generated by the AIRtools toolbox (\url{http://www.imm.dtu.dk/~pcha/Regutools/}) \cite{hansen2007regularization}. The ground image is shown in Fig. \ref{ground}, which is sparse. Apparently, we have $\tilde{b}=A\hat{x}$ and we corrupt it by adding corruptions $b^c\sim U(-100,100)$ with $\beta=0.2$ and noise $r\sim U(-0.02,0.02)$. Let $q=0.7$.

We respectively use Quantile-RKA, Quantile-ERaSK, Quantile-RaSKA and to recover the ground image from the corrupted and noisy linear system.
From Fig. \ref{fig:6}, we can find that Quantile-RaSKA has advantages over the other methods in terms of the quality of the recovered image.
Although the part of the image information is damaged by large corruption and all the information is damaged by small noise, our proposed Quantile-RaSKA method can still recover the original image relatively well.
\begin{figure}[H]
	\centering
	\subfigure[Ground truth]{
		\label{ground}
		\includegraphics[width=0.16\linewidth,height=0.22\textwidth]{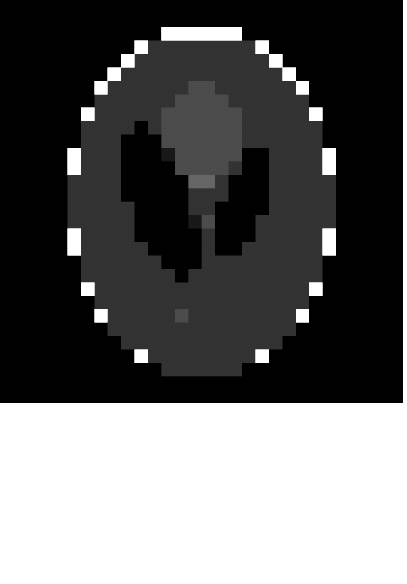}}
	\subfigure[Quantile-RKA]{
		\includegraphics[width=0.19\linewidth,height=0.25\textwidth]{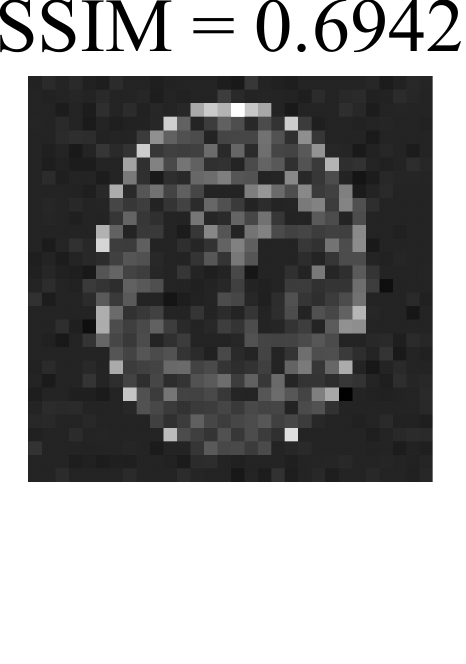}}
	\subfigure[Quantile-ERaSK]{
		\includegraphics[width=0.19\linewidth,height=0.25\textwidth]{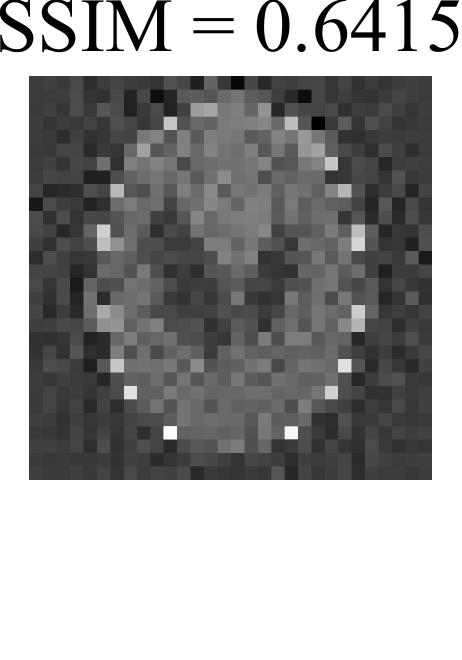}}
	\subfigure[QuantileRaSKA1]{
		\includegraphics[width=0.19\linewidth,height=0.25\textwidth]{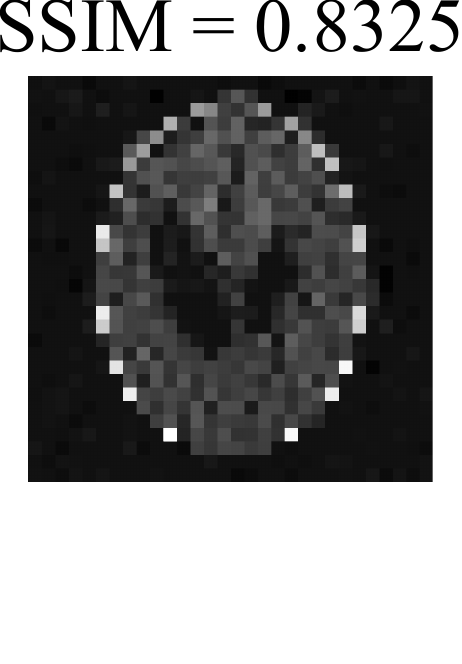}}
	\subfigure[QuantileRaSKA2]{
		\includegraphics[width=0.19\linewidth,height=0.25\textwidth]{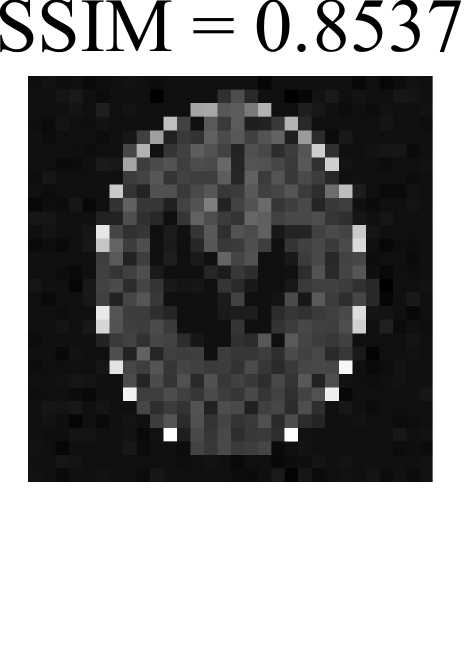}}
	\caption{The performance of different methods on real-world data: (a) Ground truth; (b) recovered by Quantile-RKA with $w=1$; (c) recovered by Quantile-ERaSK; (d) recovered by Quantile-RaSKA with $w=1$; (e) recovered by Quantile-RaSKA with $w=2$.}
	\label{fig:6}
\end{figure}

\section{Conclusion}
In order to find sparse solutions to large-scale corrupted and noise linear systems, two new Kaczmarz-type methods have been proposed in this paper, along with detailed convergence guarantees. The proposed methods not only unify many existing Kaczmarz-related algorithms, but also contribute as a more powerful algorithmic tool to solve more complicated linear systems that the current methods are unable to deal with. In the future, we would like to take the well-known heavy ball momentum technique \cite{polyak1964some} or adaptive step into our framework for possible speedup.

\bmhead{Acknowledgments}

This work was supported by the National Natural Science Foundation of China (No.11971480, No.61977065), the Natural Science Fund of Hunan for Excellent Youth (No.2020JJ3038), and the Fund for NUDT Young Innovator Awards (No.20190105).




\section*{Appendix}
\begin{appendices}
	\begin{proof}[Proof of Lemma \ref{lemma1}]
		Denote the set of all the indices of corrupted equations as $C$. The set excluding the $\beta m$ corrupted rows is denoted as $E$. For all $i\in E$, we have $b_i=\tilde{b}_i+r_i$ and $\langle a_i,\hat{x}\rangle=\tilde{b}_i=b_i-r_i$. Hence,
		\begin{align}
		\label{3:27}
		|\langle a_i,x_k-\hat{x}\rangle|
		=|\langle a_i,x_k\rangle-b_i+r_i|\nonumber
		\geq |\langle a_i,x_k\rangle-b_i|-\|r\|_{\infty}.
		\end{align}
		Thus,
		\begin{equation}
		\label{3:28}
		|\langle a_i,x_k\rangle-b_i|\leq
		|\langle a_i,x_k-\hat{x}\rangle|+\|r\|_{\infty}.
		\end{equation}
		Recall that $Q_k=Q_q(x_{k},1\leq i\leq m )$ and $|E|=(1-\beta)m$, meaning that at least $(1-\beta-q)m$ numbers of residuals $|\langle a_i,x_k\rangle-b_i|_{i=1}^m$ are at least $Q_k$. Then,
		\begin{align}
		\label{3:29}
		(1-\beta-q)mQ_k
		&\leq \sum_{i\in E}|\langle a_i,x_k\rangle-b_i|\nonumber\\
		&\leq \sum_{i\in E}(|\langle a_i,x_k-\hat{x}\rangle|+\|r\|_{\infty})\nonumber\\
		&\leq
		\left(\sum_{i\in E}|\langle a_i,x_k-\hat{x}\rangle|^2\right)^{\frac{1}{2}}\sqrt{|E|}+|E|\cdot\|r\|_{\infty}\nonumber\\
		&=
		\|A_{ E}(x_k-\hat{x})\|\sqrt{(1-\beta)m}
		+(1-\beta)m\|r\|_{\infty}\nonumber\\
		&\leq \sigma_{\max}\|x_k-\hat{x}\|\sqrt{(1-\beta)m}+
		(1-\beta)m\|r\|_{\infty}.
		\end{align}
		Therefore,
		\begin{align}
		\label{3:30}
		Q_k\leq \frac{\sqrt{1-\beta}}{(1-\beta-q)\sqrt{m}}\sigma_{\max}\|x_k-\hat{x}\|+
		\frac{1-\beta}{1-\beta-q}\|r\|_{\infty},
		\end{align}
		which completes the proof.
	\end{proof}
	
	\begin{proof}[Proof of Theorem \ref{key}]
		First of all, we give a proof sketch for Theorem \ref{key}.
		Denote the set consisting of all acceptable indices in the $k$-th iterate as
		\begin{equation}
		B=\{1\leq i\leq m:|\langle x_k,a_i\rangle|
		\leq Q_k\}.\nonumber
		\end{equation}
		The subset consisting of corrupted indices in $B$ is denoted as $S$; then the other indices in $B$ are in subset $B\backslash S$.
		Note that the used index in each iterate is sampled from the acceptable set $B$ in each iterate. This observation inspires us to consider the following splitting:
		\begin{align}
		\label{3:2}
		\mathbb{E}(D_f^{x_{k+1}^*}(x_{k+1},\hat{x}))=&
		P(i\in S)\mathbb{E}\left(D_f^{x_{k+1}^*}(x_{k+1},\hat{x})|i\in S\right)\nonumber\\
		&+
		P(i\in B\backslash S)\mathbb{E}\left(D_f^{x_{k+1}^*}(x_{k+1},\hat{x})|i\in B\backslash S\right).
		\end{align}
		Our remainder assignment is to estimate the split terms above.
		For clarity, we respectively take both inexact and exact steps into account, which are all divided into three steps.
		
		(a) In the inexact-step case,
		first we consider the uncorrupted equations indexed $i\in B\backslash S$. Since $b_{B\backslash S}^c=0$, the equations indexed $i\in B\backslash S$ satisfy
		\begin{eqnarray}
		A_{B\backslash S}x=\tilde{b}_{B\backslash S}=b_{B\backslash S}-r_{B\backslash S}.\nonumber
		\end{eqnarray}
		According to the convergence rate of RaSK in noisy case (\ref{rasknoisy:1}), we have
		\begin{align}
		\label{3:32}
		&~~~~~\mathbb{E}(D_f^{x_{k+1}^*}(x_{k+1},\hat{x})|i\in B\backslash S)\nonumber\\
		&\leq \left(1-\frac{1}{2}\cdot\frac{1}{\tilde{\kappa}(A_{B\backslash S})^2}\cdot\frac{|\hat{x}|_{\min}}{|\hat{x}|_{\min}+2\lambda}\right)
		D_f^{x_{k}^*}(x_{k},\hat{x})+\frac{1}{2}\frac{\|r_{B\backslash S}\|^2}{\|A_{B\backslash S}\|_F^2}\nonumber\\
		&\leq
		\left(1-\frac{1}{2}\cdot\frac{\tilde{\sigma}_{q-\beta,\min}^2}{qm}\cdot\frac{|\hat{x}|_{\min}}{|\hat{x}|_{\min}+2\lambda}\right)
		D_f^{x_{k}^*}(x_{k},\hat{x})
		+\frac{1}{2}\frac{qm}{|B\backslash S|}\|r\|_{\infty}^2,
		\end{align}
		where the last inequality follows from $\|r_{B\backslash S}\|^2\leq qm\cdot\|r\|_{\infty}^2$, $\|A_{B\backslash S}\|_F^2=|B\backslash S|$.
		
		Second, we consider the conditional expectation in $S$. In this case,
		\begin{align}
		A_Sx=\tilde{b}_S, b_S=\tilde{b}_S+b_S^c+r_S.\nonumber
		\end{align}
		Denote the orthogonal projection of true solution $\hat{x}\in H(a_{i_k},\tilde{b}_{i_k})$
		onto corrupted hyperplane $H(a_{i_k},b_{i_k})$ as $x_k^c$ \cite{needell2010randomized}, implying that
		\begin{equation}
		\label{3:5}
		x_k^c:=\hat{x}+(b_{i_k}-\tilde{b}_{i_k})a_{i_k}\in
		H(a_{i_k},b_{i_k}).
		\end{equation}
		Note that $x_{k+1}$ is a projection onto the hyperplane $H(a_{i_k},b_{i_k})$. According to Lemma \ref{lemma2.3}, it follows that
		\begin{equation}
		\label{3:6}
		D_f^{x_{k+1}^*}(x_{k+1},x_k^c)
		\leq
		D_f^{x_{k}^*}(x_{k},x_k^c)
		-
		\frac{1}{2}(\langle a_{i_k},x_k\rangle-b_{i_k})^2.
		\end{equation}
		By reformulating it, we obtain that
		\begin{equation}
		\label{3:7}
		D_f^{x_{k+1}^*}(x_{k+1},\hat{x})
		\leq
		D_f^{x_{k}^*}(x_{k},\hat{x})
		-
		\frac{1}{2}(\langle a_{i_k},x_k\rangle-b_{i_k})^2
		+\langle x_{k+1}^*-x_{k}^*,x_k^c-\hat{x}\rangle.
		\end{equation}
		For Quantile-RaSK with inexact step,
		we have $x_{k+1}^*-x_{k}^*=-(\langle a_{i_k},x_k\rangle-b_{i_k})a_{i_k}$, obtained from the 8-th step of Algorithm 1. Hence,
		(\ref{3:7}) can be rewritten as
		\begin{align}
		\label{3:9}
		&~~~~D_f^{x_{k+1}^*}(x_{k+1},\hat{x})\nonumber\\
		&\leq
		D_f^{x_{k}^*}(x_{k},\hat{x})
		+
		\frac{1}{2}(\langle a_{i_k},x_k\rangle-b_{i_k})^2
		-(\langle a_{i_k},x_k\rangle-b_{i_k})(\langle a_{i_k},x_k\rangle-\tilde{b}_{i_k}).
		\end{align}
		Now fix the values of the indices $i_0,\cdots,i_{k-1}$ and consider only $i_k$ as a random variable with values in $\{1,\cdots,m\}$. According to $|\langle a_{i_k},x_k\rangle-b_{i_k}|\leq Q_k$, we have
		\begin{align}
		\label{3:10}
		&~~~~\mathbb{E}_k\left(D_f^{x_{k+1}^*}(x_{k+1},\hat{x})|i\in S\right)
		\leq
		D_f^{x_{k}^*}(x_{k},\hat{x})
		+\frac{1}{2}Q_k^2+Q_k\cdot\mathbb{E}_k\left(\langle a_i,x_k-\hat{x}\rangle|i\in S\right),
		\end{align}
		Combining (\ref{3:30}) and (\ref{3:10}), we have that
		\begin{align}
		\label{3:35}
		&~~~~\mathbb{E}_k\left(D_f^{x_{k+1}^*}(x_{k+1},\hat{x})|i\in S\right)\nonumber\\
		&\leq
		D_f^{x_{k}^*}(x_{k},\hat{x})
		+\frac{1}{2}\left(\frac{1-\beta}{1-\beta-q}\right)^2\|r\|_{\infty}^2\nonumber\\
		&~~~+\frac{1-\beta}{1-\beta-q}\left(\frac{1}{\sqrt{|S|}\sqrt{1-\beta}\sqrt{m}}+\frac{1}{2}\cdot\frac{1}{(1-\beta-q)m}
		\right)\sigma_{\max}^2\|x_k-\hat{x}\|^2\nonumber\\
		&
		~~~+\frac{1-\beta}{1-\beta-q}\left(
		\frac{\sqrt{1-\beta}}{(1-\beta-q)\sqrt{m}}+\frac{1}{\sqrt{|S|}}\right)\sigma_{\max}\|x_k-\hat{x}\|\cdot \|r\|_{\infty}
		.
		\end{align}
		To handle the $\|x_k-\hat{x}\|\cdot \|r\|_{\infty}$ term we split into two
		cases: $\|x_k-\hat{x}\|\geq\sqrt{n} \|r\|_{\infty}$ and $\|x_k-\hat{x}\|\leq\sqrt{n} \|r\|_{\infty}$. It is easy to obtain that
		\begin{align}
		\label{3:38}
		&~~~~\mathbb{E}_k\left(D_f^{x_{k+1}^*}(x_{k+1},\hat{x})|i\in S\right)\nonumber\\
		&\leq
		\left(1+\frac{c_{A,\beta,q}'}{\sqrt{|S|}}+C_{A,\beta,q}\right)
		D_f^{x_{k}^*}(x_{k},\hat{x})
		+\left(\frac{d_{A,\beta,q}}{\sqrt{|S|}}+D_{A,\beta,q}\right)
		\|r\|_{\infty}^2,
		\end{align}
		where
		$$c_{A,\beta,q}'=\frac{2\sigma_{\max}^2\sqrt{1-\beta}}{\sqrt{m}(1-\beta-q)}+\frac{2\sigma_{\max}(1-\beta)}{\sqrt{n}(1-\beta-q)},
		C_{A,\beta,q}=
		\frac{\sigma_{\max}^2(1-\beta)}{m(1-\beta-q)^2}
		+
		\frac{2(1-\beta)^{\frac{3}{2}}\sigma_{\max}}{\sqrt{mn}(1-\beta-q)^2},$$	
		$$d_{A,\beta,q}=\frac{(1-\beta)\sqrt{n}}{1-\beta-q}\sigma_{\max},D_{A,\beta,q}=\frac{(1-\beta)^{\frac{3}{2}}\sqrt{n}}{(1-\beta-q)^2\sqrt{m}}\sigma_{\max}+\frac{1}{2}\left(\frac{1-\beta}{1-\beta-q}\right)^2.$$
		Finally, combining (\ref{3:2}), (\ref{3:32}) and (\ref{3:38}) we have
		\begin{align}
		\mathbb{E}_k(D_f^{x_{k+1}^*}(x_{k+1},\hat{x}))
		\leq
		\left(1-C_1\right)D_f^{x_{k}^*}(x_{k},\hat{x})
		+C_2\|r\|_{\infty}^2,
		\end{align}
		where
		\begin{eqnarray}
		&C_1=\frac{q-\beta}{2q^2} \frac{|\hat{x}|_{\min}}{|\hat{x}|_{\min}+2\lambda} \frac{\tilde{\sigma}_{q-\beta,\min}^2}{m}
		-
		\frac{2\sqrt{\beta(1-\beta)}}{q(1-\beta-q)}\left(\frac{\sigma_{\max}^2}{m}+\frac{\sigma_{\max}}{\sqrt{mn}}\right)
		-
		\frac{\beta(1-\beta)}{q(1-\beta-q)^2}\left(\frac{\sigma_{\max}^2}{m}+\frac{\sqrt{1-\beta}\sigma_{\max }}{\sqrt{mn}} \right),\nonumber\\
		&C_2=\frac{\sqrt{\beta}(1-\beta)}{q(1-q-\beta)}\left(1+\frac{\sqrt{\beta(1-\beta)}}{1-q-\beta}\right)\sqrt{\frac{n}{m}}\sigma_{\max}
		+\frac{1}{2}\frac{\beta(1-\beta)^2}{q(1-\beta-q)^2}
		+\frac{1}{2}.\nonumber
		\end{eqnarray}
		We consider all indices $i_0,i_1,\cdots,i_k$ as random variables, and take full expectation on both sides. Thus,
		\begin{align}
		\label{3:40}
		\mathbb{E}(D_f^{x_{k+1}^*}(x_{k+1},\hat{x}))
		\leq
		\left(1-C_1\right)\mathbb{E}(D_f^{x_{k}^*}(x_{k},\hat{x}))
		+C_2\|r\|_{\infty}^2,
		\end{align}
		(b) In the exact-step case, first we consider the uncorrupted equations indexed $i\in B\backslash S$. Since $b_{B\backslash S}^c=0$, the equations indexed $i\in B\backslash S$ satisfy
		\begin{eqnarray}
		A_{B\backslash S}x=\tilde{b}_{B\backslash S}=b_{B\backslash S}-r_{B\backslash S}.\nonumber
		\end{eqnarray}
		It follows from the convergence rate of ERaSK in noisy case (\ref{rasknoisy:2}) that
		\begin{eqnarray}
		\label{3:41}
		&~~~~\mathbb{E}(D_f^{x_{k+1}^*}(x_{k+1},\hat{x})|i\in B\backslash S)\nonumber\\
		&\leq
		\left(1-\frac{1}{2}\frac{\tilde{\sigma}_{q-\beta,\min}^2}{qm}\frac{|\hat{x}|_{\min}}{|\hat{x}|_{\min}+2\lambda}\right)
		D_f^{x_{k}^*}(x_{k},\hat{x})
		+\frac{1}{2}\frac{qm}{|B\backslash S|}\|r\|_{\infty}^2
		+\frac{2\sqrt{qm}}{|B\backslash S|}\|r\|_{\infty}\|A\|_{1,2}.
		\end{eqnarray}
		
		Second, we consider the conditional expectation in $S$. In this case, we have
		\begin{eqnarray}
		A_Sx=\tilde{b_S}, b_S=\tilde{b_S}+b_S^c+r_S.\nonumber
		\end{eqnarray}
		For Quantile-RaSK with exact step,
		we have $x_{k}^*=x_{k}+\lambda s_k$ with $\|s_k\|_{\infty}\leq1$ and $\|s_{k+1}\|_{\infty}\leq1$; then $x_{k+1}^*-x_{k}^*=(x_{k+1}-x_{k})+\lambda(s_{k+1}-s_{k})$. Note that
		the exact linesearch guarantees $\langle a_{i_k},x_{k+1}\rangle=b_{i_k}$.
		Thus,
		(\ref{3:7}) can be rewritten as
		\begin{align}
		\label{3:22}
		D_f^{x_{k+1}^*}(x_{k+1},\hat{x})
		&\leq
		D_f^{x_{k}^*}(x_{k},\hat{x})
		+
		\lambda \langle s_{k+1}-s_{k},a_{i_k}\rangle
		(b_{i_k}-\tilde{b}_{i_k})\nonumber\\
		&~~~+
		\frac{1}{2}(\langle a_{i_k},x_k\rangle-b_{i_k})^2
		-
		(\langle a_{i_k},x_k\rangle-b_{i_k})
		(\langle a_{i_k},x_k\rangle-\tilde{b}_{i_k}).
		\end{align}
		Viewing $i_k$ as a random variable with fixed $i_0,\ldots,i_{k-1}$, yields
		\begin{align}
		\label{3:24}
		\mathbb{E}_k(\lambda \langle s_{k+1}-s_{k},a_i\rangle
		(b_i-\tilde{b}_i)|i\in S)
		&\leq
		2\lambda\mathbb{E}_k(\|a_i\|_1\cdot|b_i-\tilde{b}_i||i\in S)\nonumber\\
		&\leq
		\frac{2\lambda}{|S|}\sum_{i\in S}(\|a_i\|_1\cdot|b_i-\tilde{b}_i|)\nonumber\\
		&\leq
		\frac{2\lambda}{|S|}\|b-\tilde{b}\|\cdot\|A\|_{1,2}.
		\end{align}
		And recall the conclusion (\ref{3:38}) in (a), we obtain
		\begin{align}
		\label{3:44}
		&~~~\mathbb{E}_k\left(D_f^{x_{k+1}^*}(x_{k+1},\hat{x})|i\in S\right)\nonumber\\
		&\leq
		\left(1+\frac{c_{A,\beta,q}'}{\sqrt{|S|}}+C_{A,\beta,q}\right)
		D_f^{x_{k}^*}(x_{k},\hat{x})
		+\left(\frac{d_{A,\beta,q}}{\sqrt{|S|}}+D_{A,\beta,q}\right)
		\|r\|_{\infty}^2\nonumber\\
		&~~~
		+\frac{2\lambda}{|S|}\|b-\tilde{b}\|\cdot\|A\|_{1,2}.
		\end{align}
		Finally, combining all ingredients: (\ref{3:2}), (\ref{3:41}) and (\ref{3:44}), and taking full expectation, we have that
		\begin{align}
		\label{3:46}
		&~~~~\mathbb{E}(D_f^{x_{k+1}^*}(x_{k+1},\hat{x}))\nonumber\\
		&\leq
		\left(1-C_1\right)\mathbb{E}D_f^{x_{k}^*}(x_{k},\hat{x})
		+C_2\|r\|_{\infty}^2
		+\frac{2}{\sqrt{qm}}\|r\|_{\infty}\cdot\|A\|_{1,2}
		+\frac{2\lambda}{qm}\|b-\tilde{b}\|\cdot\|A\|_{1,2}.\nonumber
		\end{align}
		(c) To ensure the decay in expectation, we require
		$$
		\left(
		\frac{2\sqrt{1-\beta}}{(1-\beta-q)\sqrt{\beta}m}+\frac{1-\beta}{(1-\beta-q)^2m}
		\right)\cdot\sigma_{\max}
		+
		\left(
		\frac{2(1-\beta)}{(1-\beta-q)\sqrt{mn\beta}}+
		\frac{2(1-\beta)^{\frac{3}{2}}}{(1-\beta-q)^2\sqrt{mn}}
		\right)\nonumber
		$$
		$$
		<
		\frac{1}{2}\cdot\frac{q-\beta}{\beta}\cdot\frac{|\hat{x}|_{\min}}{|\hat{x}|_{\min}+2\lambda}\cdot\frac{\tilde{\sigma}_{q-\beta,\min}^2}{\sigma_{\max}},\nonumber
		$$
		which holds for a small enough parameter $\beta$ since the left-hand side of it tends to zero as $\beta$ tends to zero.
		Therefore, we obtain the conclusion.
	\end{proof}

	\begin{proof}[The proof of Theorem \ref{th:noisy-general}.]
		Using the constant stepsize, the update in Algorithm 2 is as follows:
		\begin{equation}
		\begin{aligned}
		\label{th2:1}
		&x_{k+1}^*=x_k^*-\frac{w}{\eta}\sum_{i\in T} (\langle a_i,x_k\rangle-b_i) a_{i},\\
		&x_{k+1}=\mathcal{S}_{\lambda}(x_{k+1}^*).
		\end{aligned}
		\end{equation}
		Denote
		\begin{align}
		\label{th2:3}
		x_{k}^{\delta}:=\hat{x}+\frac{w}{\eta}\sum_{i\in T} (b_i-\tilde{b}_i) a_{i}.
		\end{align}
		Now use Lemma \ref{lemma2} with
		$f(x)=\lambda\|x\|_{1}+\frac{1}{2}\|x\|_2^2$,
		$\Phi (x)=\langle x_k^*-x_{k+1}^*,x-x_k\rangle$
		and $y=x_k^{\delta}$,
		and it holds that
		\begin{equation*}
		\begin{aligned}
		&~~~~D_f^{x_{k+1}^*}\left(x_{k+1}, x_k^{\delta}\right)\\
		&\leq
		D_f^{x_k^*}\left(x_k, x_k^{\delta}\right)
		+\langle x_k^{*}-x_{k+1}^{*},x_k^{\delta}-x_k\rangle
		-\langle x_k^*-x_{k+1}^*,x_{k+1}-x_k\rangle
		-D_f^{x_{k+1}^*}\left(x_{k},x_{k+1}\right)\\
		&\leq
		D_f^{x_k^*}\left(x_k, x_k^{\delta}\right)
		+\langle x_k^*-x_{k+1}^*,x_k^{\delta}-x_k\rangle
		+\| x_k^*-x_{k+1}^*\|\cdot \|x_{k+1}-x_k\|
		-\frac{1}{2} \|x_k-x_{k+1}\|^2\\
		&\leq
		D_f^{x_k^*}\left(x_k, x_k^{\delta}\right)
		+\langle x_k^*-x_{k+1}^*,x_k^{\delta}-x_k\rangle
		+\frac{1}{2} \|x_k^*-x_{k+1}^*\|^2,
		\end{aligned}
		\end{equation*}
		Unfolding the expression of $x_k^{\delta}$ in (\ref{th2:3}), we obtain
		\begin{equation*}
		\begin{aligned}
		&~~~~D_f^{x_{k+1}^*}\left(x_{k+1}, \hat{x}\right)\\
		&\leq
		D_f^{x_k^*}\left(x_k, \hat{x}\right)
		+\langle x_{k}^*-x_{k+1}^*,\hat{x}-x_k\rangle
		+\frac{1}{2} \|x_k^*-x_{k+1}^*\|^2,
		\end{aligned}
		\end{equation*}
		We divide it into two steps: the scalar product and the quadratic term.\\
		\textbf{Step 1:} For the inner product term, we can derive that 
		\begin{equation}
		\begin{aligned}
		\label{eq5:1}
		&~~~~\langle x_{k}^*-x_{k+1}^*,\hat{x}-x_k\rangle\\
		&=
		-\langle\frac{w}{\eta}\sum_{i\in T} (\langle a_i,x_k\rangle-b_i) a_{i},x_k-\hat{x}\rangle\\
		&=
		-\langle\frac{w}{\eta}\sum_{i\in T_1} (\langle a_i,x_k\rangle-b_i) a_{i},x_k-\hat{x}\rangle
		-\langle\frac{w}{\eta}\sum_{i\in T_2} (\langle a_i,x_k\rangle-b_i) a_{i},x_k-\hat{x}\rangle\\
		\end{aligned}
		\end{equation}
		
		From the first term in (\ref{eq5:1}), we obtain
		\begin{align}
		\label{eq5:2}
		&~~~~-\langle\frac{w}{\eta}\sum_{i\in T_1}(\langle a_i,x_k\rangle-b_i) a_{i},x_k-\hat{x}\rangle\nonumber\\
		&=
		-\langle\frac{w}{\eta}\sum_{i\in T_1}  (\langle a_i,x_k\rangle-\tilde{b}_i) a_{i},x_k-\hat{x}\rangle
		+\langle\frac{w}{\eta}\sum_{i\in T_1}  r_i a_{i},x_k-\hat{x}\rangle\nonumber\\
		&\leq
		-\frac{w}{\eta}\|\sum_{i\in T_1}a_i^Ta_i\|\cdot\|x_k-\hat{x}\|^2
		+\frac{w}{\eta}\cdot\|r\|_{\infty}\cdot\|\sum_{i\in T_1}a_i\|\cdot\|x_k-\hat{x}\|\nonumber\\
		&\leq
		-\frac{w}{qm}\sigma_{q-\beta,\min}^2\cdot\|x_k-\hat{x}\|^2
		+
		\frac{w}{\sqrt{qm}}\sigma_{\max}\cdot\|r\|_{\infty}\cdot\|x_k-\hat{x}\|.
		\end{align}
		
		From the second term in (\ref{eq5:1}), we get
		\begin{align}
		\label{eq5:3}
		&~~~~\langle\frac{w}{\eta}\sum_{i\in T_2} (\langle a_i,x_k\rangle-b_i) a_{i},x_k-\hat{x}\rangle\nonumber\\
		&\leq
		\frac{w}{\eta}Q_k\|\sum_{i\in T_2}a_{i}\|\cdot\|x_k-\hat{x}\|\nonumber\\
		&\leq
		\frac{w}{\eta}Q_k\sqrt{| T_2|}\sigma_{\max}\cdot\|x_k-\hat{x}\|\nonumber\\
		&\leq
		\frac{w\sqrt{(1-\beta)\beta}}{(1-\beta-q)qm}\sigma_{\max}^2\cdot\|x_k-\hat{x}\|^2
		+
		\frac{1-\beta}{1-\beta-q}
		\frac{w\sqrt{\beta}}{q\sqrt{m}}\sigma_{\max}\cdot\|r\|_{\infty}\cdot\|x_k-\hat{x}\|.
		\end{align}
		The last inequality makes use of Lemma \ref{lemma1}.
		Combining (\ref{eq5:1}), (\ref{eq5:2}) and (\ref{eq5:3}), we obtain
		\begin{align}
		\label{eq5:7}
		&~~~~\langle x_{k}^*-x_{k+1}^*,\hat{x}-x_k\rangle\nonumber\\
		&\leq
		\left(
		-\frac{w}{qm}\sigma_{q-\beta,\min}^2
		+\frac{w\sqrt{(1-\beta)\beta}}{(1-\beta-q)qm}\sigma_{\max}^2
		\right)\|x_k-\hat{x}\|^2\nonumber\\
		&~~~~+
		\left(
		\frac{w}{\sqrt{qm}}\sigma_{\max}
		+
		\frac{1-\beta}{1-\beta-q}
		\frac{w\sqrt{\beta}}{q\sqrt{m}}\sigma_{\max}
		\right)\|r\|_{\infty}\cdot\|x_k-\hat{x}\|.
		\end{align}
		\textbf{Step 2:} For the quadratic term, we have
		\begin{align}
		&~~~~\|x_k^*-x_{k+1}^*\|^2\nonumber\\
		&=\|\frac{w}{\eta}\sum_{i\in T} (\langle a_i,x_k\rangle-b_i) a_{i}\|^2\nonumber\\
		&=\|\frac{w}{\eta}\sum_{i\in T_1} (\langle a_i,x_k\rangle-b_i) a_{i}+\frac{w}{\eta}\sum_{i\in T_2} (\langle a_i,x_k\rangle-b_i) a_{i}\|^2,\nonumber\\
		&=\|u+v\|^2,\nonumber\\
		&\leq (\|u\|+\|v\|)^2,
		\end{align}
		where $u=\frac{w}{\eta}\sum_{i\in T_1}  (\langle a_i,x_k\rangle-b_i) a_{i},v=\frac{w}{\eta}\sum_{i\in T_2}  (\langle a_i,x_k\rangle-b_i) a_{i}$.
		
		We have
		\begin{align}
		\label{eq5:4}
		\|u\|^2
		&=\|\frac{w}{\eta}\sum_{i\in T_1} (\langle a_i,x_k\rangle-b_i) a_{i}\|^2\nonumber\\
		&\leq
		\frac{w^2}{\eta^2}\|\sum_{i\in T_1} (\langle a_i,x_k\rangle-\tilde{b}_i-r_i) a_{i}\|^2\nonumber\\
		&=
		\frac{w^2}{\eta^2}\|\sum_{i\in T_1} a_ia_i^T(x_k-\hat{x})-\sum_{i\in T_1}a_ir_i\|^2\nonumber\\
		&\leq
		\frac{w^2}{\eta^2}\left(
		\|\sum_{i\in T_1} a_ia_i^T(x_k-\hat{x})\|^2
		-2\langle
		\sum_{i\in T_1} a_ia_i^T(x_k-\hat{x}),\sum_{i\in T_1}a_ir_i\rangle+
		\|\sum_{i\in T_1}a_ir_i\|^2
		\right)\nonumber\\
		&=
		\frac{w^2}{\eta^2}
		\left(
		\|A_{ T_1}^T A_{ T_1}(x_k-\hat{x})\|^2
		-
		2\langle A_{ T_1}^TA_{ T_1}(x_k-\hat{x}),\sum_{i\in T_1}a_ir_i\rangle
		+
		\|\sum_{i\in T_1}a_ir_i\|^2
		\right)\nonumber\\
		&\leq
		\frac{w^2}{\eta^2}
		\left(
		\|A_{ T_1}^T A_{ T_1}(x_k-\hat{x})\|
		+\|\sum_{i\in T_1}a_ir_i\|
		\right)^2\nonumber\\
		&\leq
		\frac{w^2\sigma_{\max}^2}{q^2m^2}\left(
		\sigma_{\max}\|x_k-\hat{x}\|+\sqrt{qm}\|r\|_{\infty}
		\right)^2.
		\end{align}
		and
		\begin{align}
		\label{eq5:5}
		\|v\|^2
		&=\|\frac{w}{\eta}\sum_{i\in T_2} (\langle a_i,x_k\rangle-b_i) a_{i}\|^2\nonumber\\
		&\leq
		\frac{w^2}{\eta^2}\cdot Q_k^2\cdot\|\sum_{i\in T_2}a_i\|^2\nonumber\\
		&\leq
		\frac{w^2\sigma_{\max}^2\beta}{q^2m}\left(
		\frac{\sqrt{1-\beta}}{(1-\beta-q)\sqrt{m}}\sigma_{\max}\|x_k-\hat{x}\|+
		\frac{1-\beta}{1-\beta-q}\|r\|_{\infty}
		\right)^2.
		\end{align}
		Bring (\ref{eq5:4}) and (\ref{eq5:5}) together, we obtain
		\begin{align}
		\label{eq5:6}
		&~~~~\|u+v\|^2\nonumber\\
		&\leq
		\frac{w^2\sigma_{\max}^2}{q^2m^2}\left(
		\sigma_{\max}\|x_k-\hat{x}\|+\sqrt{qm}\|r\|_{\infty}
		+
		\frac{\sqrt{(1-\beta)\beta}}{1-\beta-q}\sigma_{\max}\|x_k-\hat{x}\|+
		\frac{(1-\beta)\sqrt{\beta m}}{1-\beta-q}\|r\|_{\infty}
		\right)^2\nonumber\\
		&=
		\frac{w^2\sigma_{\max}^2}{q^2m^2}
		\left[
		\sigma_{\max}
		\left(
		1+\frac{\sqrt{(1-\beta)\beta}}{1-\beta-q}
		\right)\|x_k-\hat{x}\|
		+
		\sqrt{qm}
		\left(
		1+\frac{(1-\beta)\sqrt{\beta}}{(1-\beta-q)\sqrt{q}}
		\right)\|r\|_{\infty}
		\right]^2\nonumber\\
		&\leq
		\frac{w^2}{q^2m^2}\sigma_{\max}^4\left(
		1+	\frac{\sqrt{(1-\beta)\beta}}{1-\beta-q}
		\right)^2\|x_k-\hat{x}\|^2
		+
		\frac{w^2}{qm}\sigma_{\max}^2\left(
		1+\frac{(1-\beta)\sqrt{\beta}}{(1-\beta-q)\sqrt{q}}
		\right)^2\|r\|_{\infty}^2\nonumber\\
		&~~~~+
		\frac{w^2}{(qm)^{\frac{3}{2}}}\sigma_{\max}^3
		\left(
		1+	\frac{\sqrt{(1-\beta)\beta}}{1-\beta-q}
		\right)
		\left(
		1+\frac{(1-\beta)\sqrt{\beta}}{(1-\beta-q)\sqrt{q}}
		\right)
		\|x_k-\hat{x}\|\cdot\|r\|_{\infty}.
		\end{align}
		Combining (\ref{eq5:7}) and (\ref{eq5:6}) yields
		\begin{equation*}
		\begin{aligned}
		&~~~~D_f^{x_{k+1}^*}\left(x_{k+1}, \hat{x}\right)\\
		&\leq
		D_f^{x_k^*}\left(x_k, \hat{x}\right)
		+\langle x_{k}^*-x_{k+1}^*,\hat{x}-x_k\rangle
		+\frac{1}{2} \|x_k^*-x_{k+1}^*\|^2,\nonumber\\
		&\leq
		D_f^{x_k^*}\left(x_k, \hat{x}\right)
		+
		\left(
		-\frac{w}{qm}\sigma_{q-\beta,\min}^2
		+\frac{w\sqrt{(1-\beta)\beta}}{(1-\beta-q)qm}\sigma_{\max}^2
		\right)\|x_k-\hat{x}\|^2\nonumber\\
		&~~~~+
		\left(
		\frac{w}{\sqrt{qm}}\sigma_{\max}
		+
		\frac{1-\beta}{1-\beta-q}
		\frac{w\sqrt{\beta}}{q\sqrt{m}}\sigma_{\max}
		\right)\|r\|_{\infty}\cdot\|x_k-\hat{x}\|\nonumber\\
		&~~~~+
		\frac{w^2}{q^2m^2}\sigma_{\max}^4\left(
		1+	\frac{\sqrt{(1-\beta)\beta}}{1-\beta-q}
		\right)^2\|x_k-\hat{x}\|^2
		+
		\frac{w^2}{qm}\sigma_{\max}^2\left(
		1+\frac{(1-\beta)\sqrt{\beta}}{(1-\beta-q)\sqrt{q}}
		\right)^2\|r\|_{\infty}^2\nonumber\\
		&~~~~+
		\frac{w^2}{(qm)^{\frac{3}{2}}}\sigma_{\max}^3
		\left(
		1+	\frac{\sqrt{(1-\beta)\beta}}{1-\beta-q}
		\right)
		\left(
		1+\frac{(1-\beta)\sqrt{\beta}}{(1-\beta-q)\sqrt{q}}
		\right)
		\|x_k-\hat{x}\|\cdot\|r\|_{\infty}\\
		&=
		D_f^{x_k^*}\left(x_k, \hat{x}\right)
		+
		\left(
		-c_1\frac{w}{m}\sigma_{q-\beta,\min}^2
		+c_2\frac{ w}{m}\sigma_{\max}^2
		+c_3 \frac{w^2}{m^2}\sigma_{\max}^4
		\right)\|x_k-\hat{x}\|^2\nonumber\\
		&~~~~+
		\left(
		c_4\frac{w}{\sqrt{m}}\sigma_{\max}
		+c_5\frac{w^2}{m^{\frac{3}{2}}}\sigma_{\max}^3
		\right)
		\|r\|_{\infty}\cdot\|x_k-\hat{x}\|
		+
		c_6\frac{w^2}{m}\sigma_{\max}^2\|r\|_{\infty}^2,\nonumber\\
		\end{aligned}
		\end{equation*}
		where $c_i,i=1,\cdots,6$ are positive constants only related to $q$ and $\beta$ and can be directly obtained.
		For the term $\|r\|_{\infty}\cdot\|x_k-\hat{x}\|$, we use average inequality
		$$\|r\|_{\infty}\cdot\|x_k-\hat{x}\|\leq \frac{1}{2\sqrt{n}}\|x_k-\hat{x}\|^2+\frac{\sqrt{n}}{2}\|r\|_{\infty}^2.$$
		Denote $\alpha=\frac{|\hat{x}|_{\min}}{ |\hat{x}|_{\min} +2 \lambda}$, and use Lemma \ref{lemma3}, we have
		\begin{align}
		\label{appen:1}
		&~~~~D_f^{x_{k+1}^*}\left(x_{k+1}, \hat{x}\right)\nonumber\\
		&\leq
		D_f^{x_k^*}\left(x_k, \hat{x}\right)
		+
		\left(
		-c_1\frac{w}{m}\sigma_{q-\beta,\min}^2
		+c_2\frac{w}{m}\sigma_{\max}^2
		+c_3\frac{w^2}{m^2}\sigma_{\max}^4
		+\frac{c_4}{2}\frac{w}{\sqrt{mn}}\sigma_{\max}
		\right)\|x_k-\hat{x}\|^2\nonumber\\
		&~~~~~~
		+\frac{c_5}{2}\frac{w^2}{m^{3/2}n^{1/2}}\sigma_{\max}^3\|x_k-\hat{x}\|^2
		+
		\left(
		\frac{c_4}{2}\frac{\sqrt{n}w}{\sqrt{m}}\sigma_{\max}+\frac{c_5}{2}\frac{\sqrt{n}w^2}{m^{3/2}}\sigma_{\max}^3+c_6\frac{w^2}{m}\sigma_{\max}^2
		\right)\|r\|_{\infty}^2
		\nonumber\\
		&\leq
		\left[
		1-\alpha\left(
		c_1\frac{w}{m}\frac{\tilde{\sigma}_{\min }^2\sigma_{q-\beta,\min}^2}{\sigma_{\max}^2}
		-c_2\frac{w}{m}\tilde{\sigma}_{\min }^2
		-c_3\frac{w^2}{m^2}\tilde{\sigma}_{\min }^2\sigma_{\max}^2
		-\frac{c_4}{2}\frac{w\tilde{\sigma}_{\min }^2}{\sqrt{mn}\sigma_{\max}}
		\right)
		\right]D_f^{x_k^*}\left(x_k, \hat{x}\right)\nonumber\\
		&~~~~~~
		-\frac{c_5}{2}\frac{w^2\tilde{\sigma}_{\min }^2}{m^{3/2}n^{1/2}}\sigma_{\max}D_f^{x_k^*}\left(x_k, \hat{x}\right)
		+
		\left(
		\frac{c_4}{2}\frac{\sqrt{n}w}{\sqrt{m}}\sigma_{\max}+\frac{c_5}{2}\frac{\sqrt{n}w^2}{m^{3/2}}\sigma_{\max}^3+c_6\frac{w^2}{m}\sigma_{\max}^2
		\right)\|r\|_{\infty}^2\nonumber
		\\
		&=(1-c_1^*w+c_2^*w^2)D_f^{x_k^*}\left(x_k, \hat{x}\right)
		+(c_3^*w+c_4^*w^2)\|r\|_{\infty}^2,
		\end{align}
		where
		$$c_1^*=c_1\alpha\frac{\tilde{\sigma}_{\min }^2\sigma_{q-\beta,\min}^2}{ m\sigma_{\max}^2}-c_2\alpha\frac{\tilde{\sigma}_{\min }^2}{ m}-c_4\alpha\frac{\tilde{\sigma}_{\min }^2}{2\sqrt{mn}\sigma_{\max}},~
		c_2^*=c_3\alpha\frac{\tilde{\sigma}_{\min }^2\sigma_{\max}^2}{ m^2}+c_5\alpha\frac{\tilde{\sigma}_{\min }^2\sigma_{\max}}{2 m^{3/2}n^{1/2}},$$
		$$c_3^*=c_4\frac{\sqrt{n}\sigma_{\max}}{2\sqrt{m}},~c_4^*=c_5\frac{\sqrt{n}\sigma_{\max}^3}{2m^{3/2}}+c_6\frac{\sigma_{\max}^2}{m}.$$

	\end{proof}
\end{appendices}

\bibliography{ref}

\end{document}